\definecolor{darkblue}{rgb}{0,0,0.5}
\numberwithin{equation}{section}
\setlist{nolistsep, noitemsep}
\newtheorem{theorem}{Theorem}[section]
\newtheorem{lemma}[theorem]{Lemma}
\newtheorem{corollary}[theorem]{Corollary}
\newtheorem{conjecture}[theorem]{Conjecture}
\theoremstyle{definition}
\newtheorem{definition}[theorem]{Definition}
\declaretheorem[sibling=theorem, name=Proposition]{proposition}
\newcommand{\Prob}[1]{\ensuremath{%
    \mathbb P\left[#1\right]
}}
\newcommand{\Expect}[1]{\ensuremath{%
    \mathbb E\left[#1\right]
  }}
\newcommand{\eps}{\ensuremath{\varepsilon}}
\newcommand{\cH}{\ensuremath{\mathcal H}}
\newcommand{\cG}{\ensuremath{\mathcal G}}
\newcommand{\X}{\ensuremath{\mathbf X}}
\def\COMMENT#1{}
\def\tom#1{}
\newcommand{\APPENDIX}[1]{}
\newcommand{\NOTAPPENDIX}[1]{#1}
\renewcommand{\APPENDIX}[1]{#1}                    
\renewcommand{\NOTAPPENDIX}[1]{}                   
\newcommand{\remainingColoursRV}{\ensuremath \mathbf{\Lambda}}
\newcommand{\uncolActivatedNbrsRV}{\ensuremath \mathbf{Y}}
\newcommand{\colDegRV}{\ensuremath \mathbf{D}}
\newcommand{\keptUnactColsRV}{\ensuremath \mathbf{R}}
\newcommand{\sampleSpace}{\ensuremath \mathbf\Omega}
\newcommand{\sigmaAlg}{\ensuremath \mathbf\Sigma}
\newcommand{\conflictsRV}{\ensuremath \mathbf{F}}
\newcommand{\change}{\ensuremath \delta}
\newcommand{\newlambda}{\ensuremath{\Lambda_\mathbb E}}
\newcommand{\newdeg}{\ensuremath{D_{\mathbb E}}}
\newcommand{\row}{\ensuremath{A}}
\newcommand{\col}{\ensuremath{B}}
\newcommand{\sym}{\ensuremath{C}}
\begin{document}

\title[A special case of Vu's conjecture]{
  A special case of Vu's conjecture: Coloring nearly disjoint graphs of bounded maximum degree
}

\date{}

\author[Kelly]{Tom Kelly}
\address{School of Mathematics, Georgia Institute of Technology, Atlanta, GA, USA}
\email{tom.kelly@gatech.edu}

\author[K\"uhn]{Daniela K\"uhn}

\author[Osthus]{Deryk Osthus}

\address{School of Mathematics, University of Birmingham,
  Edgbaston, Birmingham, B15 2TT, United Kingdom}
\email{\{D.Kuhn, D.Osthus\}@bham.ac.uk}

\thanks{This project has received partial funding from the European Research
Council (ERC) under the European Union's Horizon 2020 research and innovation programme (grant agreement no.~786198, T.~Kelly, D.~K\"uhn and D.~Osthus).
The research leading to these results was also partially supported by the EPSRC, grant nos.~EP/N019504/1 (T.~Kelly and D.~K\"uhn) and EP/S00100X/1 (D.~Osthus).}

\begin{abstract}
  A collection of graphs is \textit{nearly disjoint} if every pair of them intersects in at most one vertex.  We prove that if $G_1, \dots, G_m$ are nearly disjoint graphs of maximum degree at most $D$, then the following holds.  For every fixed $C$, if each vertex $v \in \bigcup_{i=1}^m V(G_i)$ is contained in at most $C$ of the graphs $G_1, \dots, G_m$, then the (list) chromatic number of $\bigcup_{i=1}^m G_i$ is at most $D + o(D)$.  This result confirms a special case of a conjecture of Vu and generalizes Kahn's bound on the list chromatic index of linear uniform hypergraphs of bounded maximum degree.  In fact, this result holds for the correspondence (or DP) chromatic number and thus implies a recent result of Molloy and Postle, and we derive this result from a more general list coloring result in the setting of `color degrees' that also implies a result of Reed and Sudakov.  
\end{abstract}

\maketitle

\section{Introduction}

A \textit{proper coloring} of a graph $G$ is an assignment of colors to the vertices of $G$ such that no two vertices assigned the same color are adjacent, and the \textit{chromatic number} of $G$, denoted $\chi(G)$, is the minimal number of colors needed to properly color $G$.  A \textit{list assignment} for a graph $G$ is a map $L$ whose domain is the vertex set $V(G)$ of $G$ where $L(v)$ is a finite set called the `list of available colors for $v$', and an \textit{$L$-coloring} of $G$ is a proper coloring $\phi$ of $G$ such that $\phi(v) \in L(v)$ for every $v\in V(G)$.  If $G$ has an $L$-coloring, then $G$ is \textit{$L$-colorable}.  The \textit{list chromatic number} of $G$, denoted $\chi_\ell(G)$, is the minimum $k \in \mathbb N$ such that $G$ is $L$-colorable for every list assignment $L$ satisfying $|L(v)| \geq k$ for all $v \in V(G)$.

We say graphs $G_1, \dots, G_m$ are \textit{nearly disjoint} if $|V(G_i) \cap V(G_j)| \leq 1$ for all distinct $i, j \in [m]$.  In this paper, we prove an asymptotically optimal bound on the (list) chromatic number of a union of nearly disjoint graphs of bounded maximum degree (see Theorem~\ref{cor:main}), which confirms a special case of a conjecture of Vu (see Conjecture~\ref{conj:vu}) and generalizes several well-known results on hypergraph edge-coloring.  We derive our bound from a more general result (see Theorem~\ref{thm:main}) concerning $L$-colorings when $L$ is a list assignment for a union of nearly disjoint graphs such that $L$ has bounded maximum \textit{color degree}.  This result also implies a result of Reed and Sudakov~\cite{ReSu02}.  We also prove a bound on the chromatic number of a union of nearly disjoint graphs of bounded chromatic number (see Theorem~\ref{thm:chi-efl}).

\subsection{Line graphs of linear hypergraphs and nearly disjoint graph unions}\label{sec:nd-union-bounded-max-deg}

A \textit{hypergraph} $\cH$ is a pair $\cH = (V, E)$ where $V$ is a set whose elements are called \textit{vertices} and $E \subseteq 2^V$ is a set of subsets of $V$ whose elements are called \textit{edges}.  A \textit{proper edge-coloring} of a hypergraph $\cH$ is an assignment of colors to the edges of $\cH$ such that no two edges of the same color share a vertex, and the \textit{chromatic index} of $\cH$, denoted $\chi'(\cH)$, is the minimum number of colors used by a proper edge-coloring of $\cH$.  The \textit{line graph} of a hypergraph $\cH$ is the graph $G$ where $V(G)$ is the edge set of $\cH$, and $e, f \in V(G)$ are adjacent in $G$ if $e \cap f \neq \emptyset$.  Note that the chromatic index of a hypergraph is the chromatic number of its line graph.  The \textit{list chromatic index} of a hypergraph $\cH$, denoted $\chi'_\ell(\cH)$, is the list chromatic number of its line graph.

A hypergraph $\cH$ is \textit{linear} if every two distinct edges of $\cH$ intersect in at most one vertex and \textit{$k$-bounded} if every edge of $\cH$ has size at most $k$.  A celebrated result of Pippenger and Spencer~\cite{PS89} implies that $k$-bounded linear hypergraphs of maximum degree at most $D$ have chromatic index at most $D + o(D)$ for fixed $k$ as $D\rightarrow\infty$, and a similarly influential result of Kahn~\cite{K96} generalizes the Pippenger--Spencer theorem to list coloring.  (Both of these results apply more generally to hypergraphs of small codegree, and not only to linear hypergraphs.)  An intermediate result of Kahn~\cite[Theorem 3]{K92} was also crucial to proving the Erd\H os--Faber--Lov\' asz conjecture asymptotically.  This conjecture states that a nearly disjoint union of $n$ complete graphs, each on at most $n$ vertices, has chromatic number at most $n$, and it was recently confirmed for all large $n$ by Kang, Methuku, and the authors~\cite{KKKMO21}.  Note that the line graph of a $k$-bounded linear hypergraph is a union of nearly disjoint complete graphs, such that at most $k$ of them contain any given vertex.  Thus, the following result strengthens both the Pippenger--Spencer theorem and Kahn's list edge-coloring theorem for the case of linear hypergraphs.

\begin{theorem}\label{cor:main}
  For every $C, \eps > 0$, the following holds for all sufficiently large $D$.  If $G_1, \dots, G_m$ are nearly disjoint graphs of maximum degree at most $D$, such that each vertex $v \in \bigcup_{i=1}^m V(G_i)$ is contained in at most $C$ of them, then $\chi_\ell(\bigcup_{i=1}^m G_i) \leq (1 + \eps)D$.  
\end{theorem}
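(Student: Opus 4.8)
The plan is to deduce Theorem~\ref{cor:main} from the more general color-degree result Theorem~\ref{thm:main}, and to prove the latter by the semi-random (``nibble'') method. For the deduction, fix an arbitrary list assignment $L$ for $G := \bigcup_{i=1}^m G_i$ with $|L(v)| = \lceil (1+\eps)D \rceil$ for every vertex $v$. The only structural fact we use is that every vertex of $G_i$ has at most $D$ neighbors \emph{inside $G_i$}, so for every $i$, every $v \in V(G_i)$ and every color $c$, at most $D$ of the neighbors of $v$ in $G_i$ have $c$ on their list. This is precisely the bounded color-degree hypothesis needed to apply Theorem~\ref{thm:main} (with color-degree bound $D$ and list size $(1+\eps)D$), so $G$ is $L$-colorable; since $L$ was arbitrary, $\chi_\ell(G) \le (1+\eps)D$.

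For Theorem~\ref{thm:main} the plan is a nibble in the spirit of the semi-random method used by Kahn and by Reed and Sudakov. We iteratively build a partial proper $L$-coloring; at stage $t$ each still-uncolored vertex $v$ carries a residual list $L_t(v) \subseteq L(v)$ of colors unused by its colored neighbors, and we track the list size $\lambda_t(v) := |L_t(v)|$ and, for every $i$ with $v \in V(G_i)$ and every $c \in L_t(v)$, the per-graph color degree $d_t^i(v,c) := |\{u \in N_{G_i}(v) : u \text{ uncolored},\ c \in L_t(u)\}|$. Writing $\lambda_t$ and $d_t$ for the current uniform lower/upper bounds on these quantities, the key invariant is that the ``color-abundance ratio'' $d_t / \lambda_t$ stays below $1$ — in fact it strictly improves each round. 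One round assigns to each uncolored $v$ a uniformly random $\sigma(v) \in L_t(v)$ and makes $v$ permanently colored $\sigma(v)$ unless some neighbor of $v$ in $G$ also drew $\sigma(v)$. Here the near-disjointness enters decisively: the neighborhoods $N_{G_i}(v)$ over the at most $C$ graphs through $v$ are pairwise disjoint, so $v$ has at most $CD$ neighbors in $G$ and at most $C d_t$ of them have any fixed color on their list. Hence the probability that $v$ retains its color, and the probability that a fixed color survives on $L_t(v)$, are each at least $(1 - 1/\lambda_t)^{C d_t} \approx e^{-C d_t/\lambda_t}$, which is a positive constant depending only on $C$ and $\eps$ since $d_t/\lambda_t \le d_0/\lambda_0 = 1/(1+\eps)$. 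The crucial computation is that $\lambda_t$ and $d_t$ both shrink by essentially this same ``color-survival'' factor $e^{-Cd_t/\lambda_t}$, while $d_t$ shrinks by the additional factor $1 - e^{-Cd_t/\lambda_t}$ (the probability an uncolored neighbor gets colored); consequently $d_{t+1}/\lambda_{t+1} \le (1-\beta_t)\, d_t/\lambda_t$ with $\beta_t \ge e^{-C/(1+\eps)} > 0$, and iterating this recursion drives $d_t/\lambda_t$ to $0$. After $O_{C,\eps}(\log\log D)$ rounds one reaches $d_t < 1$ — so no uncolored vertex shares a residual-list color with an uncolored neighbor — while $\lambda_t$ is still $\Omega_{C,\eps}(D) \ge 1$; giving each remaining vertex any color from its residual list then completes a proper $L$-coloring.

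Turning this outline into a proof, the main obstacle is the concentration of $\lambda_t(v)$ and $d_t^i(v,c)$ around their expectations: these are functions of many independent choices whose worst-case Lipschitz constants are as large as $\Theta(D)$ — a single vertex's choice can flip the status of an entire neighborhood — so a naive bounded-differences argument is useless. The remedy is to pass to suitable surrogate quantities: delete a color from a neighbor's residual list as soon as \emph{some} neighbor of that neighbor merely draws it (rather than waiting to see whether it is retained), and condition on the color in question still being available at $v$, which decouples the contributions of far-apart parts of the neighborhood; the surrogates then have bounded Lipschitz constants and small certificates, so Talagrand-type inequalities apply. A second obstacle is that $m$, and hence the number of vertices, is unbounded, so one cannot union-bound over all vertices and colors; instead each ``bad event'' (an invariant failing at some $(v,i,c)$) depends only on the random choices in a bounded-radius ball around $v$ in $G$ and has probability at most $\exp(-D^{\Omega(1)})$, which is far below the reciprocal of its $(CD)^{O(1)}$ dependency degree, so the Lov\'asz Local Lemma yields a round satisfying all invariants simultaneously. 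Finally, passing from list coloring to correspondence (DP) coloring requires only cosmetic changes, since every step above is local to bounded-radius neighborhoods and refers to each vertex's own palette, which yields the claimed bound for the correspondence chromatic number.
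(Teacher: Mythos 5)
Your deduction of Theorem~\ref{cor:main} from Theorem~\ref{thm:main} is correct and is exactly the paper's reduction, and several of your structural ideas for the nibble — passing to surrogate random variables that ``delete on draw'' so Talagrand-type concentration applies, using the Lov\'asz Local Lemma over bounded-dependency bad events to handle unbounded $m$, and noting that everything is local so the DP/correspondence extension is routine — all match the paper. However, the core of your nibble as written has a genuine gap.

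You run the \emph{wasteful} nibble: every uncolored vertex draws a tentative color each round (activation probability $p=1$). Then the probability a fixed color survives on $L_t(v)$ is roughly $(1-1/\lambda_t)^{Cd_t}\approx e^{-Cd_t/\lambda_t}$, and at the start $d_0/\lambda_0 = 1/(1+\eps)$, so this is $\approx e^{-C/(1+\eps)}$. That is exponentially small in $C$. Consequently $\lambda_1 \approx e^{-C/(1+\eps)}D$, and more generally $\lambda_t$ collapses by a factor exponential in $C$ per round. Simultaneously your ratio update $r_{t+1}\le r_t(1-e^{-Cr_t})$ has $1-e^{-Cr_t}$ extremely close to $1$ when $Cr_t$ is large, so for moderately large $C$ the ratio barely improves per round while $\lambda_t$ is decimated. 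Your claim that ``$\lambda_t$ is still $\Omega_{C,\eps}(D)$'' after the required number of rounds is false in this regime; the lists fall below the scale $D^{4/5}$ needed for concentration long before $d_t/\lambda_t$ becomes small. The paper avoids this precisely by taking activation probability $p=\Theta(1/\log D)$: then the per-round survival probability $(1-p/\Lambda)^{CD}\approx e^{-pC/(1+\eps)}=1-O(p)$ is close to $1$, so $\Lambda$ decays gently, at the cost of needing $\Theta(C\eps^{-1}\log D)$ rounds (Proposition~\ref{prop:calculations} shows the ratio improves by a factor $1+\eps p/4$ per round).

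A second issue is the target. You aim to iterate until $d_t<1$, but the concentration step has an unavoidable additive error of order $D^{4/5}$ (or at best $D^{\Omega(1)}$), so the upper bound one can certify on $d_t$ never drops below a polynomial in $D$; moreover the $\log\log D$ count rests on the quadratic decay $r_{t+1}\approx Cr_t^2$, which stalls once $r_t$ is comparable to the concentration error divided by $\lambda_t$. The paper instead stops once $\lambda_t/d_t\ge 8C$ and finishes in one shot with Reed's Local-Lemma lemma (Lemma~\ref{lemma:finishing-blow}), which only requires the list size to exceed $8$ times the color degree. You should replace the ``drive $d_t<1$'' endgame with such a finishing lemma, and replace the wasteful round with a $p$-activated round.
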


Theorem~\ref{cor:main} also confirms a special case of the following conjecture of Vu~\cite{Vu02}, and (as our discussion shows) it recovers several of its significant consequences.

\begin{conjecture}[Vu~\cite{Vu02}]\label{conj:vu}
  For every $\zeta, \eps > 0$, the following holds for all sufficiently large $D$. If $G$ is a graph of maximum degree at most $D$ and every two distinct vertices have at most $\zeta D$ common neighbors in $G$, then $\chi_\ell (G) \leq (\zeta + \eps) D$.
\end{conjecture}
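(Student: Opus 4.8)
To prove Conjecture~\ref{conj:vu}, I would first reduce it to a statement about locally sparse graphs and then attack that statement via a sharp analysis of a semi-random coloring procedure.

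\textbf{Step 1: reduction to neighborhood sparsity.} If $u \sim v$ have at most $\zeta D$ common neighbors in $G$, then $u$ has at most $\zeta D$ neighbors inside $N(v)$, so $G[N(v)]$ has maximum degree at most $\zeta D$ and hence at most $\zeta D^2/2$ edges; equivalently, the number of triangles through each vertex is at most $\zeta D^2/2$. The codegree bound for non-adjacent pairs is never used, so it suffices to prove the following, formally stronger, statement: for every $\zeta, \eps > 0$ and all sufficiently large $D$, if $\Delta(G) \le D$ and every $N(v)$ spans at most $\zeta D^2/2$ edges, then $\chi_\ell(G) \le (\zeta + \eps)D$. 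Writing $\sigma = 1 - \zeta$, this asserts that `$\sigma$-locally-sparse' graphs are $L$-colorable whenever $|L(v)| \ge (1 - \sigma + \eps)D$, i.e.\ with the best possible leading constant.

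\textbf{Step 2: semi-random coloring with a sparsity dividend.} I would run the iterated wasteful/nibble coloring procedure in the style of Molloy-Reed and Kahn. Starting from arbitrary lists of size $(\zeta + \eps)D$, carry out $O_{\zeta,\eps}(1)$ nibble rounds: in each round every uncolored vertex is activated with probability $p$; an activated vertex picks a uniformly random color from its current list and keeps it unless a neighbor picked the same color, after which that color is deleted from the lists of its still-uncolored neighbors. For each vertex $v$ track its current list size $\ell_v$ and uncolored degree $d_v$. The point of local sparsity is that a color $c \in L(v)$ disappears only when \emph{some} uncolored neighbor of $v$ keeps $c$, and since a $(1 - \zeta - o(1))$-fraction of the pairs of neighbors of $v$ are non-adjacent (for $v$ of near-maximum degree; vertices of much smaller degree need no help), the colors kept across $N(v)$ coincide often, so $\ell_v$ shrinks strictly more slowly than $d_v$. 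Using Talagrand/bounded-differences concentration together with the Lov\'asz Local Lemma to control this evolution at all vertices simultaneously, the target is an invariant $\ell_v \ge (1 + \delta)d_v$ with $\delta = \delta(\eps) > 0$ after the nibble phase, at which point a straightforward greedy completion (the invariant gives $\ell_v > d_v$) finishes an $L$-coloring.

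\textbf{Step 3: the main obstacle --- the optimal constant.} The naive Molloy-Reed accounting in Step~2 delivers only a bound of the form $\chi_\ell(G) \le (1 - c(1 - \zeta))D$ for some absolute $c < 1$, which falls short of the conjectured $(\zeta + \eps)D$. Extracting the sharp constant requires a best-possible analysis of a single round, and the route I would take is the local-occupancy / hard-core model method, adapted to list coloring: show that, at the appropriate fugacity, the occupancy fraction of the hard-core measure on each neighborhood $G[N(v)]$ is at least $1 - \zeta - o(1)$ --- which is exactly the rate of progress the invariant needs --- and then convert this distributional estimate into an honest partial $L$-coloring by iterated sampling, concentration, and the Local Lemma. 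The crux is keeping this occupancy computation valid at \emph{every} round, once the lists have shrunk and become irregular, and reconciling it with the list-size/degree invariant. The fractional relaxation of the conjecture looks considerably more amenable to exactly this machinery, and it is the difficulty of upgrading it to the list chromatic number in full generality that motivates exploiting additional combinatorial structure in special cases, as in Theorem~\ref{cor:main}. The remaining points --- vertices whose lists decay atypically fast, very low degree vertices, and the dependency bookkeeping for the Local Lemma --- are the familiar technical ones.
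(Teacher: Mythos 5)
The statement you are trying to prove is not proved in the paper at all: it is stated as Conjecture~\ref{conj:vu}, an open conjecture of Vu, and the paper only establishes the special case recorded as Theorem~\ref{cor:main} (nearly disjoint unions of bounded-degree graphs). So there is no paper proof to compare against, and your proposal must stand on its own as a proof of the full conjecture --- which it does not. The decisive gap is the one you yourself flag in Step~3: the nibble analysis of Step~2 yields a bound of the shape $(1 - c(1-\zeta))D$ with some absolute $c<1$, not the sharp $(\zeta+\eps)D$, and the proposed repair via local occupancy / hard-core estimates is only a program. You do not show that the occupancy estimate survives the list irregularity after the first round, nor how to convert a fractional/occupancy statement into a genuine $L$-coloring guarantee at the optimal constant; you explicitly concede these points. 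A proof cannot end where the difficulty begins --- as the paper notes, even the existence of an independent set of the size implied by the conjecture is open, and the only unconditional progress on the full conjecture (Hurley, de Joannis de Verclos, and Kang) concerns the chromatic number, not the list chromatic number, and only for $\zeta$ close to $1$.

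A secondary concern is Step~1. Discarding the codegree condition on non-adjacent pairs is indeed a formal strengthening of the hypothesis, but it is a strategically consequential one: the resulting ``locally sparse'' statement with the optimal leading constant is itself a well-known open problem, and for fixed $\zeta$ the best known consequences of neighborhood sparsity alone give constants strictly worse than $\zeta$. Nothing in your sketch exploits the non-adjacent codegree bound, which is exactly the structural information that the paper's special case (near-disjointness, bounded containment $C$) makes usable through the color-degree framework of Theorem~\ref{thm:main}. If you want a tractable target, that special case --- where the nibble with the exceptional-outcomes version of Talagrand's inequality does close at the optimal constant --- is the right one; the full conjecture remains open.
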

Indeed, Conjecture~\ref{conj:vu}, if true, implies Theorem~\ref{cor:main} with $1 / C$ and $C(D + C^2)$ playing the roles of $\zeta$ and $D$, respectively.  If $G_1, \dots, G_m$ are nearly disjoint graphs of maximum degree at most $D$, such that each vertex $v \in \bigcup_{i=1}^m V(G_i)$ is contained in at most $C$ of them, then every two distinct vertices in $G\coloneqq \bigcup_{i=1}^m G_i$ have at most $D + C^2$ common neighbors\COMMENT{Let $u, v \in V(G)$ be distinct vertices, and let $i \in [m]$ such that $|\{u, v\} \cap V(G_i)|$ is maximum.  Now $u$ and $v$ have at most $D$ common neighbors in $G_i$.  By the choice of $i$, if $u \in V(G_j)$ for $j \in [m]\setminus \{i\}$, then $v \notin V(G_i)$, and thus, $u$ and $v$ have at most $C$ common neighbors in $G_j$ (as for each such $z \in V(G_j)$ there must be a different index $k$ such that $v, z \in V(G_k)$).  Since there are at most $C$ such $j \in [m]\setminus\{i\}$, $u$ and $v$ have at most $D + C^2$ common neighbors.}, and  $G$ has maximum degree at most $CD$.  Vu~\cite{Vu02} initially observed that a similar argument shows that Conjecture~\ref{conj:vu} implies Kahn's~\cite{K96} bound on the list chromatic index of linear hypergraphs of bounded maximum degree, and this was a major motivation for the conjecture: `\textit{The bound in [Conjecture~\ref{conj:vu}], if valid, would be an \textit{amazing} result.  For instance, it would immediately imply a deep theorem of Kahn on the list chromatic index of [linear] hypergraphs.}'~\cite[pg.~109]{Vu02}.
The only other nontrivial result towards Vu's conjecture so far was recently obtained by Hurley, de Joannis de Verclos, and Kang~\cite{HdVK20}, who proved a bound on the chromatic number of graphs as in Conjecture~\ref{conj:vu}.  Their result confirms the weaker version of the conjecture that considers the chromatic number rather than the list chromatic number, in the case when $\zeta > 1 - o(\eps^{2/3})$.  We remark that even finding an independent set of the required size is still an open problem (which was already raised by Vu~\cite{Vu02}).

\subsection{Color degrees}

Our main result in this paper is actually a generalization of Theorem~\ref{cor:main} that also implies a result of Reed and Sudakov~\cite{ReSu02}.  
For a graph $G$ with list assignment $L$, we define the \textit{color degree} of each $v\in V(G)$ and $c \in L(v)$ to be $d_{G, L}(v, c) \coloneqq |\{u \in N_G(v) : c\in L(u)\}|$ and the \textit{maximum color degree} of $G$ and $L$ to be $\Delta(G, L) \coloneqq \max_{v\in V(G)}\max_{c\in L(v)}d_{G, L}(v, c)$.  Note that $\Delta(G, L) \leq \Delta(G)$.  The following is our main result.
\begin{theorem}\label{thm:main}
  For every $C, \eps > 0$, the following holds for all sufficiently large $D$.  Let $G_1, \dots, G_m$ be graphs that 
  \begin{enumerate}[(i)]
  \item\label{hypo:intersection-bound} are nearly disjoint and 
  \item\label{hypo:containment-bound} satisfy $|\{i \in [m] : v \in V(G_i)\}| \leq C$ for every $v \in \bigcup_{i=1}^m V(G_i)$.
  \end{enumerate}
 If $L$ is a list assignment for $G \coloneqq \bigcup_{i=1}^m G_i$ satisfying
  \begin{enumerate}[(i)]\stepcounter{enumi}\stepcounter{enumi}
  \item\label{hypo:degree-bound} $\Delta(G_i, L|_{V(G_i)}) \leq D$ for every $i \in [m]$ and
  \item\label{hypo:list-bound} $|L(v)| \geq (1 + \eps)D$ for every $v\in V(G)$,
  \end{enumerate}
  then $G$ is $L$-colorable.
\end{theorem}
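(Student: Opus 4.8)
The plan is to prove Theorem~\ref{thm:main} by the semi-random (``nibble'') method, building a proper $L$-colouring of $G$ in two phases: an iterative phase that properly $L$-colours all but a controlled part of $G$ while keeping tight hold on the residual lists and on the colour degrees inside each $G_i$, and a Lov\'asz Local Lemma ``mop-up'' that colours whatever remains. I would first record the elementary consequences of the near-disjointness: no $2$-element set of vertices lies in two of the sets $V(G_i)$, so the edge sets $E(G_i)$ are pairwise disjoint and $d_{G,L}(v,c)=\sum_{i:\,v\in V(G_i)}d_{G_i,L}(v,c)\le CD$ for all $v$ and $c$; moreover $G$ has maximum degree at most $CD$ and, as observed in the excerpt, any two vertices have at most $D+C^2$ common neighbours. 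Passing to a sublist of each $L(v)$ I may also assume $|L(v)|=\lceil(1+\eps)D\rceil$ for all $v$.

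For the nibble, fix a small constant $\delta=\delta(C,\eps)>0$ and a number of rounds $T=T(C,\eps)$. In round $t$, with uncoloured set $U_t$ and residual lists $L_t$, activate each vertex of $U_t$ independently with probability $\delta$; let each activated $v$ pick a colour $\gamma(v)\in L_t(v)$ uniformly at random; colour $v$ permanently by $\gamma(v)$ unless some neighbour of $v$ in $G$ picked the same colour; and let $L_{t+1}(v)$ be $L_t(v)$ minus the colours used on $N_G(v)$ in round~$t$. I would maintain throughout the invariant that $|L_t(v)|\ge\ell_t$ for every $v\in U_t$ and $d_{G_i,L_t}(v,c)\le D_t$ for every $i$, every $v\in U_t\cap V(G_i)$ and every $c\in L_t(v)$, where $\ell_0=\lceil(1+\eps)D\rceil$, $D_0=D$, and
\[
  \ell_{t+1}=(1-o(1))\,e^{-\delta CD_t/\ell_t}\,\ell_t,\qquad D_{t+1}=(1+o(1))(1-\delta)\,\frac{\ell_{t+1}}{\ell_t}\,D_t .
\]

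The crucial point is a cancellation in the ratio $\ell_t/D_t$. A colour $c\in L_t(v)$ survives in $L_{t+1}(v)$ with probability roughly $\exp(-\delta|\{u\in N_G(v):c\in L_t(u)\}|/\ell_t)$, so $\ell_{t+1}/\ell_t\approx e^{-\delta CD_t/\ell_t}$, which can be small if $C$ is large; but a neighbour $u\in N_{G_i}(v)$ with $c\in L_t(u)$ contributes to $d_{G_i,L_{t+1}}(v,c)$ only if it both remains uncoloured (probability $\approx1-\delta$) and retains $c$ (probability $\approx\ell_{t+1}/\ell_t$), so $D_{t+1}/D_t\approx(1-\delta)\,\ell_{t+1}/\ell_t$, and therefore $\ell_{t+1}/D_{t+1}\approx(\ell_t/D_t)/(1-\delta)$ --- the ratio grows by a factor $\approx(1-\delta)^{-1}$ per round, independently of $C$. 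As a consequence $\ell_t/D_t\approx(1+\eps)(1-\delta)^{-t}$, so the total list shrinkage $\prod_{s<t}e^{-\delta CD_s/\ell_s}$ is bounded below by $e^{-C/(1+\eps)}$, and after $T\approx\delta^{-1}\ln(20C)$ rounds one obtains $\ell_T/D_T\ge10C$ with still $\ell_T\ge e^{-C}(1+\eps)D=\Omega_{C,\eps}(D)$. The mop-up is then routine: truncating each residual list to $\lceil\ell_T\rceil$ colours, on $G^{\ast}:=G[U_T]$ one has $d_{G^{\ast},L_T}(v,c)\le\sum_i d_{G_i,L_T}(v,c)\le CD_T\le|L_T(v)|/10$ for all $v,c$, so colouring each vertex by a uniformly random colour of its list and applying the Lov\'asz Local Lemma to the events that a fixed colour is used on both endpoints of a fixed edge of $G^{\ast}$ (each of probability $\le\ell_T^{-2}$ and depending on at most $O(CD_T\ell_T)$ of the others) yields a proper $L_T$-colouring of $G^{\ast}$; together with the colours fixed during the nibble this is the required $L$-colouring.

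The main obstacle is showing that the invariant propagates from round $t$ to round $t+1$, i.e.\ that $|L_{t+1}(v)|$ and $d_{G_i,L_{t+1}}(v,c)$ are tightly concentrated around the quantities above. Each of these is determined by the activations and colour choices of the vertices within distance $2$ of $v$ in $G$, a set of size at most $(CD)^{O(1)}$; so it suffices to show that the probability of each ``bad'' deviation is at most $(CD)^{-K}$ for a large enough constant $K$, and then apply the Lov\'asz Local Lemma to the resulting locally finite family of bad events to extract a single round-$t$ colouring realising the target invariant --- iterating $T$ times gives Phase~1, and this local-extraction step also deals with the fact that $m$, and hence $|V(G)|$, is unbounded. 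The difficulty is that a direct application of Talagrand's inequality to $|L_{t+1}(v)|$ is too weak, because its natural certificate has size of order $CD_t$, comparable to the mean. This is exactly where I expect the near-disjoint structure --- equivalently the $O(D)$ codegree bound together with the multiplicity bound $C$ --- to be indispensable: it should be used both to justify the expectation estimates above and, more delicately, to reorganise $|L_{t+1}(v)|$ and $d_{G_i,L_{t+1}}(v,c)$ as sums of nearly independent bounded contributions, or to localise the effect of a single colour choice, so that a Chernoff/Bernstein- or Freedman-type concentration bound applies. I expect this concentration step, rather than the expectation computations or the mop-up, to be the technical heart of the proof.
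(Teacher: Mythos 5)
Your overall plan is the same as the paper's: a nibble/semi-random colouring procedure that tracks residual list sizes $\ell_t$ and per-$G_i$ colour degrees $D_t$, exploits the cancellation $\ell_{t+1}/D_{t+1}\approx(\ell_t/D_t)\cdot(1-p)^{-1}$ (so the ratio improves each round independently of $C$ even though both quantities shrink by a $C$-dependent factor), and finishes by a Lov\'asz Local Lemma mop-up once $\ell_t/D_t$ is large; each round a local-lemma argument extracts a single good outcome, which also handles $|V(G)|$ being unbounded. You have also correctly flagged that the concentration analysis, not the expectation computation or the mop-up, is where the real work lies. However there are two genuine gaps.

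First, your diagnosis of the concentration difficulty is misplaced. You worry that Talagrand fails for $|L_{t+1}(v)|$ because its certificate has size of order $C D_t\approx\ell_t$, comparable to the mean. But a certificate of size $s\approx\Lambda$ is perfectly adequate when the target deviation is only $\Lambda^{4/5}$: the Talagrand bound $\exp(-t^2/O(s))=\exp(-\Omega(\Lambda^{3/5}))$ is more than small enough for the local-lemma union over the polynomially many dependent events, and the paper applies Talagrand directly to this random variable with $s=2\Lambda$ and $\delta=1$. The variable that is \emph{genuinely} not amenable to Talagrand is the colour degree $d_{G_i,L_{t+1}}(v,c)$: changing a single trial can flip whether \emph{any} vertex of $G_i$ received colour $c$, which changes this variable by $\Theta(D)$ all at once (the paper's remark about $G_1$ complete, $C=m=1$). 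The fix in the paper is to decompose the colour degree as an overcount by two variables --- one counting neighbours that were activated but left uncoloured, one counting inactive neighbours that retained $c$ --- and to show \emph{each} of these has small certificates, the former only after discarding an exceptional event (too many $G_i$-neighbours of $v$ receiving the same colour) with tiny probability, which requires the Bruhn--Joos version of Talagrand with exceptional outcomes. Your proposal gestures at ``reorganise as sums of nearly independent contributions'' but gives no concrete decomposition, and this decomposition, together with the exceptional-outcome machinery, is precisely the missing ingredient.

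Second, your recursion $D_{t+1}\approx(1-\delta)(\ell_{t+1}/\ell_t)D_t$ drops a term that is not negligible with your constant activation probability $\delta$. A neighbour $u\in N_{G_i}(v)$ with $c\in L_t(u)$ can also contribute to $d_{G_i,L_{t+1}}(v,c)$ when it is activated but fails to be permanently coloured because of a conflict; this adds roughly $\delta\bigl(1-(1-\delta/\Lambda)^{DC}\bigr)D_t=\Theta(\delta^2)D_t$ to $D_{t+1}$, which is comparable to the $\Theta(\delta\eps)D_t$ gain you are trying to harvest unless $\delta\ll\eps/C$. The paper's choice $p=1/\log D$ (and hence $\Theta(\log D)$ rounds rather than $O(1)$) makes this second-order term uniformly negligible and lets the ratio grow by exactly $1+\Theta(\eps p)$ per round. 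With a fixed constant $\delta$ you would have to track this extra term carefully, and you would also accumulate per-round additive errors over a number of rounds that is only $O(1)$ --- salvageable, but not automatic.
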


Note that Theorem~\ref{cor:main} immediately follows from Theorem~\ref{thm:main}.

Theorem~\ref{thm:main} actually holds more generally for \textit{correspondence coloring}, also known as \textit{DP-coloring} (see Theorem~\ref{thm:main-corr}), and this result also implies the recent result of Molloy and Postle~\cite{M18}, that $k$-bounded linear hypergraphs of maximum degree at most $D$ have \textit{correspondence chromatic index} at most $D + o(D)$.  A proof of Theorem~\ref{thm:main-corr} can be obtained with only minor modifications to our argument used to prove Theorem~\ref{thm:main}.  Thus, for the sake of presentation, we choose to first provide a complete proof of Theorem~\ref{thm:main} for list coloring.  Then, in Section~\ref{section:corr-col}, we describe the modifications necessary to prove Theorem~\ref{thm:main-corr}, its correspondence coloring generalization.

The case $m = 1$ (or equivalently, $C = 1$) in Theorem~\ref{thm:main} is a result of Reed and Sudakov~\cite{ReSu02}.  For every $C \in \mathbb N$, let $f_C(D)$ denote the smallest integer for which the following holds: If $G_1, \dots, G_m$ are graphs satisfying \ref{hypo:intersection-bound} and \ref{hypo:containment-bound}, and if $L$ is a list assignment for $G \coloneqq \bigcup_{i=1}^m G_i$ satisfying \ref{hypo:degree-bound} such that $|L(v)| \geq D + f_C(D)$ for every $v \in V(G)$, then $G$ is $L$-colorable.  Clearly, $f_C(D) \geq 1$\COMMENT{for example, consider when $m = 1$, $G_1\cong K_{D+1}$, and every vertex has the same list}.  Theorem~\ref{thm:main} implies that $f_C(D) = o(D)$, but it would be interesting to prove better asymptotics.  A bound of $f_C(D) \leq D^{1 - 1/C}$ for $C \geq 2$ would match the best known bound on the list chromatic index of linear hypergraphs due to Molloy and Reed~\cite{MR2000}.  Reed~\cite{Ree99} conjectured that $f_1(D) = 1$ for every $D \in \mathbb N$.  Bohman and Holzman~\cite{BoHo02} disproved this conjecture, and Reed and Sudakov~\cite{ReSu02} asked whether $f_1(D) = O(1)$.  Another interesting direction would be to generalize Theorem~\ref{thm:main} so as to also imply the generalization of the Reed--Sudakov result in which the maximum color degree condition is replaced with an average one, proved by Glock and Sudakov~\cite{GS20} and Kang and Kelly~\cite{KK20}.  For other related results and open problems involving list coloring and color degrees, see e.g.~\cite{AA20, ABD21, ABD23, CK20}.  More generally, for a recent survey on coloring results and open problems obtained via nibble methods, see~\cite{KKKMOsurvey}.

\subsection{Coloring nearly disjoint graphs of bounded chromatic number}

Erd\H{o}s proposed several variations of the Erd\H{o}s--Faber--Lov\'{a}sz conjecture.  For example, relaxing the condition of nearly disjointness, Erd\H{o}s~\cite[Problem 9]{erdos1979} asked for the largest possible chromatic number of a union of $n$ complete graphs, each on at most $n$ vertices, that pairwise intersect in at most $t$ vertices.  Building on the methods of \cite{KKKMO21}, this problem was recently solved by Kang, Methuku, and the authors~\cite{KKKMO21t-EFL}.  Here we discuss a problem related to a question of Erd\H os~\cite[p.~26]{erdos1981} on bounds on the chromatic number of a union of nearly disjoint graphs $G_i$ if we know their chromatic number (rather than their maximum degree as in Section~\ref{sec:nd-union-bounded-max-deg}).

Accordingly, for a family $\cG$ of graphs and $m \in \mathbb N$, let $f(m, \cG)$ be the largest possible chromatic number of the union of at most $m$ nearly disjoint graphs in $\cG$.  
Recall that the Erd\H os--Faber--Lov\' asz conjecture states that a nearly disjoint union of $n$ complete graphs, each on at most $n$ vertices, has chromatic number at most $n$.  Thus, the Erd\H os--Faber--Lov\' asz conjecture can be expressed as follows: For all $n \in \mathbb N$, we have $f(n, \{K_1, \dots, K_n\}) \leq n$, where $K_t$ denotes the complete graph on $t$ vertices.  However, it is straightforward to show that $f(n, \{K_1, \dots, K_n\}) \leq \max\{n, f(n, \{K_{n-1}\})\}$\COMMENT{since we can add vertices to the complete graphs with less than $n - 1$ vertices and remove (and color last) a vertex $x$ of degree at most $n - 1$ from every complete graph with $n$ vertices (where $x$ lies in only one of the complete graphs)}, and it is well-known that $f(n, \{K_{n-1}\}) \geq n$ for $n \geq 3$\COMMENT{the line graph of the $n$-vertex degenerate plane is an $n$-vertex complete graph, which is the nearly disjoint union of one complete graph on $n - 1$ vertices and $n - 1$ complete graphs on two vertices}.  Hence, the Erd\H os--Faber--Lov\' asz conjecture can be reduced to the seemingly weaker statement that $f(n, \{K_{n-1}\}) \leq n$ for every $n \in \mathbb N$.

Erd\H os~\cite[p.~26]{erdos1981} proposed the following variation of the Erd\H os--Faber--Lov\' asz conjecture: `\textit{Let $G_1, \dots, G_m$ be $m$ graphs each of chromatic number $n$.  Assume that no two $G$'s have an edge in common.  What is the smallest $m$ for which $\bigcup_{i=1}^m G_i$ has chromatic number greater than $n$?  Perhaps one can further demand that any two $G$'s have at most one vertex in common.}'  It turns out the answers to these two questions are `two' and `three' respectively.  (For the former, observe that a complete graph on $n + 1$ vertices can be expressed as the union of a complete graph on $n$ vertices and a star with no edges in common; for the latter, see Theorem~\ref{thm:chi-efl}\ref{chi-efl-lower} below.)  The latter question can also be expressed as follows: What is the smallest $m$ for which $f(m, \cG^\chi_n) > n$, where $\cG^\chi_n$ is the set of graphs of chromatic number at most $n$?  Although this question is straightforward to answer, we believe that further analysis of the function $f(m, \cG^\chi_n)$ itself is warranted (which was probably the original intention of the above question of Erd\H os).  In this direction, we prove the following.

\begin{theorem}\label{thm:chi-efl}~
  \begin{enumerate}[(i)]
  \item\label{chi-efl-upper} The following holds for all $m, n \in \mathbb N$ with $m + n$ sufficiently large: If $G_1, \dots, G_m$ are nearly disjoint graphs, each of chromatic number at most $n$, then $\chi(\bigcup_{i=1}^m G_i) \leq m + n - 2$.
  \item\label{chi-efl-lower} For every $n \geq 2$, there exist nearly disjoint graphs $G_1, G_2, G_3$, each of chromatic number at most $n$, such that $\chi(G_1\cup G_2 \cup G_3) \geq n + 1$.
  \end{enumerate}
\end{theorem}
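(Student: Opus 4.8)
The plan is to treat the two parts separately, starting with the (elementary) construction for part~\ref{chi-efl-lower} and then the upper bound in part~\ref{chi-efl-upper}.

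For part~\ref{chi-efl-lower}, fix $n \ge 2$, pick distinct vertices $p, q, r$, and pick disjoint $(n-1)$-element sets $A_1, A_2$, both disjoint from $\{p,q,r\}$. I would take $G_1$ to be a copy of $K_{n+1}$ with the edge $pr$ removed, on vertex set $\{p,r\} \cup A_1$; take $G_2$ to be a copy of $K_{n+1}$ with the edge $pq$ removed, on vertex set $\{p,q\} \cup A_2$; and take $G_3$ to be the single edge $qr$. Then $G_1 \cap G_2 = \{p\}$, $G_1 \cap G_3 = \{r\}$, and $G_2 \cap G_3 = \{q\}$, so $G_1, G_2, G_3$ are nearly disjoint, and $\chi(G_1) = \chi(G_2) = n$ while $\chi(G_3) = 2 \le n$. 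The crucial observation is that for an edge $e$ the graph $K_{n+1} - e$ has chromatic number $n$ and in \emph{every} proper $n$-colouring the two endpoints of $e$ receive the same colour: indeed $K_{n+1} - e$ has $n+1$ vertices and $n$ colour classes, so some class has two vertices, and the only independent pair in $K_{n+1}-e$ is the pair of endpoints of $e$. Hence any proper $n$-colouring $\phi$ of $G_1 \cup G_2 \cup G_3$ would satisfy $\phi(p) = \phi(r)$ (reading off $G_1$) and $\phi(p) = \phi(q)$ (reading off $G_2$), so $\phi(q) = \phi(r)$, contradicting the fact that $qr$ is an edge of $G_3$. Therefore $\chi(G_1 \cup G_2 \cup G_3) \ge n+1$, as required.

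For part~\ref{chi-efl-upper}, I would argue by induction on $m$. The base case $m = 2$ is the easy fact that if $G_1, G_2$ are nearly disjoint with $\chi(G_i) \le n$ then $\chi(G_1 \cup G_2) \le n$: properly $n$-colour $G_1$, then properly $n$-colour $G_2$, permuting its colours so that the at most one common vertex keeps the colour it already has. For the inductive step with $m \ge 3$, I would remove $G_m$; by induction $\chi\!\left(\bigcup_{i<m} G_i\right) \le (m-1)+n-2 = m+n-3$, so I would fix such a colouring and attempt to extend it to $G_m$ using one brand-new colour $\ast$. Let $S \coloneqq V(G_m) \cap V\!\left(\bigcup_{i<m} G_i\right)$ be the set of already-coloured vertices of $G_m$; since $G_m$ meets each $G_i$ in at most one vertex, $|S| \le m-1$, and any two vertices of $S$ adjacent in $G_m$ lie in two distinct $G_i$ (they cannot lie in a common $G_i$, as then $G_i$ would meet $G_m$ in two vertices) and are therefore \emph{non-adjacent} in $\bigcup_{i<m} G_i$.

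The main obstacle is precisely that two such vertices may have received the same colour, which would block the extension. I would resolve this by first recolouring a suitable subset of $S$ --- recolouring any vertex of $S$ with the new colour $\ast$ is always legal in $\bigcup_{i<m} G_i$, since all of its neighbours there use only old colours --- so that the (re)colouring of $S$ becomes proper with respect to the edges of $G_m[S]$, and then extending it to the remaining vertices of $G_m$ using the old colours together with $\ast$; here one exploits $\chi(G_m) \le n$ together with $|S| \le m-1$ to guarantee that enough colours remain available at each vertex. The delicate points, which is where the bulk of the work lies, are to choose \emph{which} endpoint of each monochromatic edge of $G_m[S]$ to recolour (so that the recoloured vertices do not conflict with one another) and to control the interaction between this recolouring and the colours already forbidden at $S$ by $\bigcup_{i<m} G_i$; an alternative is to strengthen the inductive statement so that the colouring of $\bigcup_{i<m} G_i$ is built from the outset to be compatible with the internal edges of $G_m$ and of the later graphs. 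The hypothesis that $m+n$ is sufficiently large is what rules out the small exceptional cases where the bound fails, such as $m=1$.
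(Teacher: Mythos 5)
Your part~(\ref{chi-efl-lower}) construction is correct and, up to relabelling, identical to the paper's: two copies of $K_{n+1}$ meeting in a single vertex, each with one incident edge removed, joined by the deleted-edge graph $G_3$. The observation that every proper $n$-colouring of $K_{n+1}-e$ forces the two endpoints of $e$ to receive the same colour is also exactly the paper's argument.

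Part~(\ref{chi-efl-upper}) has a genuine gap, and the route is in any case quite different from the paper's. Your inductive step takes a proper $(m+n-3)$-colouring of $\bigcup_{i<m}G_i$, recolours part of $S=V(G_m)\cap V\bigl(\bigcup_{i<m}G_i\bigr)$ with a single new colour $\ast$, and then extends to the rest of $G_m$; both stages fail in general. For the recolouring you need a set $T\subseteq S$ that is independent in $G_m$ and covers every monochromatic edge of $G_m[S]$; such a $T$ need not exist. For instance, three vertices of $S$ lying in three distinct earlier $G_i$'s can form a triangle in $G_m$ while all receiving the same colour (they are pairwise non-adjacent in $\bigcup_{i<m}G_i$, so nothing prevents this), and a monochromatic triangle has no independent vertex cover. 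Even when $T$ exists, extending the resulting proper precolouring of $S$ to all of $G_m$ with a palette of size $m+n-2$ is a precolouring-extension problem that $\chi(G_m)\le n$ and $|S|\le m-1$ do not by themselves solve: the colours already fixed on $S$ can be incompatible with every proper $n$-colouring of $G_m$, and one fresh colour is not enough slack to repair this. You flag these as the ``delicate points'' and suggest strengthening the induction, but you do not specify a strengthening that would work, and I do not see one. The paper proves part~(\ref{chi-efl-upper}) by an entirely different minimal-counterexample argument: applying the asymptotic Erd\H os--Faber--Lov\'asz theorem (Theorem~\ref{thm:efl}) shows that some $G_i$ must have more than $m+n-2$ vertices, and one then finds two vertices of that $G_i$ in a common colour class of a fixed proper $n$-colouring, one of which lies in no other $G_j$, and identifies them to produce a strictly smaller counterexample. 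The reliance on the (recently proved, and deep) EFL theorem is exactly what forces the hypothesis ``$m+n$ sufficiently large''; an induction that avoids it must, when all the $G_i$ are small (say each close to $K_{m+n-2}$), effectively be proving an EFL-type statement from scratch, which is a further reason I do not believe the proposed route can be completed.
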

Theorem~\ref{thm:chi-efl}\ref{chi-efl-upper} implies that $f(m, \cG^\chi_n) \leq m + n - 2$ when $m + n$ is sufficiently large, and Theorem~\ref{thm:chi-efl}\ref{chi-efl-lower} implies that this bound is tight for $m = 3$.  However, this bound can likely be improved for larger $m$.  In particular, it is tempting to conjecture that $f(n, \cG^\chi_{n-1}) \leq n$, which by the discussion above, would imply the Erd\H os--Faber--Lov\' asz conjecture if true; however, this was disproved by Postle~\cite{Ppc}, as follows.

\begin{theorem}[Postle~\cite{Ppc}]\label{thm:postle}
  For every $m \in \mathbb N$ divisible by three and $n \geq m - 1$, there exist nearly disjoint graphs $G_1, \dots, G_m$, each of chromatic number at most $n$, such that $\chi(\bigcup_{i=1}^m G_i) \geq n + m / 6$.
\end{theorem}

Because of the connection to the Erd\H{o}s--Faber--Lov\'{a}sz conjecture, we find it most interesting to study the function $f(n, \cG^\chi_{n-1})$.  Theorems~\ref{thm:chi-efl}\ref{chi-efl-upper} and \ref{thm:postle}\COMMENT{applied with $m \in [n - 2, n]$ divisible by three and $n - 1$ playing the role of $n$} imply that for large $n$ we have
\begin{equation}\label{eqn:chi-efl-bounds}
  \frac{7n - 8}{6}\COMMENT{$=(n-1) + (n - 2)/6 \leq n - 1 + m / 6 \leq f(m, \cG^\chi_{n - 1})$} \leq f(n, \cG^\chi_{n - 1}) \leq 2n - 3.
\end{equation}
We think it would be interesting to determine $\lim_{n\rightarrow\infty}f(n, \cG^\chi_{n-1}) / n$, assuming the limit exists.  By \eqref{eqn:chi-efl-bounds}, it would be in the range $[7/6, 2]$.

Considering Theorem~\ref{cor:main}, it is natural to also ask for $f(n, \cG^\Delta_{n - 1})$ (or $f(n, \cG^\Delta_{n - 2})$), where $\cG^\Delta_{n}$ is the set of graphs of maximum degree at most $n$.  However, it is straightforward to show that $f(n, \cG^\Delta_{n - 1}) = f(n, \{K_n\}) = f(n, \{K_{n - 1}\}) = f(n, \cG^\Delta_{n - 2})$ for every $n \geq 3$.\COMMENT{A nearly disjoint union of $n$ graphs $G_1, \dots, G_n$, each of maximum degree at most $n - 1$, has a vertex of degree at most $n - 1$ unless $|V(G_i)| \leq n - 1$ for every $i \in [n]$.  Thus, $f(n, \cG^\Delta_{n - 1}) \leq \max\{n, f(n, \{K_{n-1}\})\} \leq f(n, \{K_n\})$, and $f(n, \cG^\Delta_{n - 1}) \geq f(n, \{K_n\})$ since $K_n \in \cG^\Delta_{n - 1}$.  Moreover, a similar argument shows the last equality, and the argument above implies the second equality.}

\subsection{Outline of the paper}
Sections~\ref{section:main-proof}--\ref{section:nibble} are devoted to the proof of Theorem~\ref{thm:main}.  We prove Theorem~\ref{thm:main} using a semi-random coloring procedure (also referred to as the `nibble method').  Each step of this procedure is obtained by an application of Lemma~\ref{lemma:nibble}, which is proved in Section~\ref{section:nibble}.  Section~\ref{section:tools} provides some probabilistic tools needed in the proof of Lemma~\ref{lemma:nibble}.  In Section~\ref{section:main-proof}, we prove Theorem~\ref{thm:main} (assuming Lemma~\ref{lemma:nibble}).
As mentioned, Theorem~\ref{thm:main} holds more generally for correspondence coloring.  In Section~\ref{section:corr-col}, we formally state this generalization and describe how to modify the proof of Theorem~\ref{thm:main} to prove it.
In Section~\ref{section:chi-efl}, we prove Theorems~\ref{thm:chi-efl} and \ref{thm:postle}.

\section{Proof of Theorem~\ref{thm:main}}\label{section:main-proof}

In this section we prove Theorem~\ref{thm:main}, assuming the following lemma.  Given $k \in \mathbb Z$, we write $\log^k D \coloneqq (\log D)^k$, where the logarithm is base $e$.  Given $\Lambda, D, C, p \in \mathbb R$, we also define
  \begin{align*}
    \newlambda(\Lambda, D, C, p) &\coloneqq \left(1 - \frac{p}{\Lambda}\right)^{DC}\Lambda && \mathrm{and}\\
    \newdeg(\Lambda, D, C, p) &\coloneqq \left((1 - p)\left(1 - \frac{p}{\Lambda}\right)^{D(C-1)} + p\left(1 - \left(1 - \frac{p}{\Lambda}\right)^{DC}\right)\right) D.
  \end{align*}

\begin{lemma}\label{lemma:nibble}
  For every $C, \eps > 0$, there exists $D_{\ref{lemma:nibble}}$ such that the following holds for every $D \geq D_{\ref{lemma:nibble}}$.
  Let $G_1, \dots, G_m$ be graphs, and let $L$ be a list assignment for $G \coloneqq \bigcup_{i=1}^m G_i$ satisfying \ref{hypo:intersection-bound}--\ref{hypo:list-bound} (of Theorem~\ref{thm:main}).
  If $|L(v)| = \lceil\Lambda\rceil$ for every $v\in V(G)$, where $(1 + \eps)D \le \Lambda \le 10CD$,  and if $\log^{-1} D \geq p \geq \log^{-2}D$,
  then there exist $X\subseteq V(G)$, an $L|_X$-coloring $\phi$ of $G[X]$, and a list assignment $L'$ for $G - X$ satisfying $L'(v) \subseteq L(v) \setminus \{\phi(u) : u \in N_G(v)\cap X\}$ for every $v\in V(G)\setminus X$,
  such that 
  \begin{enumerate}[(\ref*{lemma:nibble}.1)]
  \item\label{nibble:list-size} $|L'(v)| = \left\lceil \newlambda(\Lambda, D, C, p) - \Lambda^{4/5}\right\rceil$ for every $v\in V(G)\setminus X$ and
  \item\label{nibble:color-degree} $\Delta(G_i - X, L'|_{V(G_i - X)}) \leq \newdeg(\Lambda, D, C, p) + D^{4/5}$ for every $i \in [m]$.
  \end{enumerate}
\end{lemma}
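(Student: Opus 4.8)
The plan is to prove Lemma~\ref{lemma:nibble} by performing one round of a semi-random (`nibble') partial colouring and using the Lov\'asz Local Lemma to show that, with positive probability, the resulting objects have the required properties.

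First I would describe the round. Independently, activate each $v\in V(G)$ with probability $p$, and give each $v$ a colour $\sigma(v)$ chosen uniformly at random from $L(v)$. Let $X$ be the set of activated vertices $v$ no activated neighbour of which (in $G$) received the colour $\sigma(v)$, and set $\phi\coloneqq\sigma|_X$. Since $X$ is contained in the activated set, adjacent vertices of $X$ get distinct colours, so $\phi$ is an $L|_X$-colouring of $G[X]$. For $v\in V(G)\setminus X$ let $\mathrm{Avail}(v)\subseteq L(v)$ be the colours of $L(v)$ not blocked by a coloured neighbour (i.e.\ those outside $\{\phi(u):u\in N_G(v)\cap X\}$), and then choose $L'(v)$ to be a subset of $\mathrm{Avail}(v)$ of size exactly $\lceil\newlambda(\Lambda,D,C,p)-\Lambda^{4/5}\rceil$. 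The compatibility of $\phi$ and $L'$ demanded by the lemma then holds by construction; the two things to control are that $\mathrm{Avail}(v)$ is large enough that such an $L'(v)$ exists, and that the resulting colour degrees are small. I expect that the colours one discards when passing from $\mathrm{Avail}(v)$ to $L'(v)$ (and possibly a few extra deletions) must be chosen in a coordinated rather than a purely random way, so as to simultaneously hit the prescribed list size and force every colour degree below $\newdeg(\Lambda,D,C,p)+D^{4/5}$; this coordinated truncation is, I think, one of the two main obstacles.

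Next, the first-moment and concentration estimates. For a fixed $v$ and colour $c\in L(v)$, a neighbour $u$ with $c\in L(u)$ blocks $c$ at $v$ only if it is activated with $\sigma(u)=c$, which has probability $p/\lceil\Lambda\rceil\le p/\Lambda$, independently over $u$; since $v$ lies in at most $C$ of the $G_i$ and each contributes at most $D$ such neighbours by \ref{hypo:degree-bound}, the probability that $c$ survives is at least $(1-p/\Lambda)^{CD}$, so $\mathbb{E}|\mathrm{Avail}(v)|\ge\newlambda(\Lambda,D,C,p)$. For the colour degree of a triple $(v,c,i)$, running over the at most $D$ neighbours $u$ of $v$ in $G_i$ with $c\in L(u)$ and conditioning on the random choices ``near $v$ inside $G_i$'', one bounds the expected potential colour degree by $\newdeg(\Lambda,D,C,p)$: near-disjointness means that this conditioning already controls essentially all of the close vertices of each such $u$, leaving only its $\le D(C-1)$ neighbours in its other $\le C-1$ graphs as free randomness — the source of the exponent $D(C-1)$ in $\newdeg$ — and the $p(1-(1-p/\Lambda)^{DC})$ term of $\newdeg$ accounts for neighbours $u$ that get activated but fail to join $X$. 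One then needs concentration of $|\mathrm{Avail}(v)|$ and of each colour degree around their (conditional) means, with deviation $O(\Lambda^{4/5})$ and failure probability at most $\exp(-D^{\Omega(1)})$. Both are sums of many low-probability indicators, and I would establish these bounds by revealing the random choices in a suitable order and applying the Bernstein/Freedman-type martingale inequalities of Section~\ref{section:tools}: a single vertex attempting colour $c$ can strip $c$ from the available lists of many other vertices at once, but for the one-sided tails we need this only helps, and the total conditional-variance budget is $O(CD)$, which comfortably beats the target deviation.

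Finally, assemble the round with the local lemma. Introduce one bad event per vertex $v$ (its available list too short, together with whatever auxiliary events make the coordinated choice of $L'$ succeed) and one per triple $(v,c,i)$ (its colour degree exceeding $\newdeg+D^{4/5}$). Each of these is determined by the random choices of at most $\mathrm{poly}(D,C)$ ``relevant'' vertices nearby — roughly, neighbours and second-neighbours carrying a colour of interest — so it is mutually independent of all but $\mathrm{poly}(D,C)$ of the others, a bound not depending on $m$. Since $\exp(-D^{\Omega(1)})\cdot\mathrm{poly}(D,C)<1$ for all large $D$, the Lov\'asz Local Lemma yields an outcome avoiding every bad event, which is exactly a choice of $X$, $\phi$ and $L'$ satisfying \ref{nibble:list-size} and \ref{nibble:color-degree}. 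The two steps I expect to be hardest are the concentration of the colour degree — where the naive Lipschitz- and certificate-based inequalities are too weak, since a single colour-trial can move the count by order $D$, and near-disjointness must be exploited to keep cross-graph interactions small — and the coordinated choice of the truncated lists $L'$.
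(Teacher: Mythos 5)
Your proposal correctly reproduces the skeleton of the paper's argument — the activation/colouring round, the definition of $X$ and $\phi$, the expectation calculations and the final Lov\'asz Local Lemma application — and you correctly identify where the real difficulty lies, but you leave that difficulty unresolved, and it is the crux of the proof. The random colour degree $\colDegRV_{v,c,i}$, the number of $G_i$-neighbours $u$ of $v$ surviving into $G_i-X$ with $c$ still available, is genuinely \emph{not} concentrated: as the paper points out via the $C=m=1$, $G_1=K_{D+1}$ example, it jumps between roughly $(1-p)D$ and $0$ according to whether any activated vertex picks colour $c$, so any certificate, Lipschitz, Freedman or Talagrand bound applied to it directly fails. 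Revealing the trials ``in a suitable order'' does not rescue this — the revelation of that one vertex's colour has a martingale increment of order $D$ with non-negligible probability, so the claimed $O(CD)$ conditional-variance budget does not materialise, and one-sidedness does not help because the upper tail is exactly the one that is fat. The paper's resolution is the overcounting inequality~\eqref{eqn:degree-overcount}: replace $\colDegRV_{v,c,i}$ by $\uncolActivatedNbrsRV_{v,c,i}+\keptUnactColsRV_{v,c,i}$, where $\uncolActivatedNbrsRV_{v,c,i}$ counts activated neighbours that fail to join $X$, and $\keptUnactColsRV_{v,c,i}$ counts unactivated neighbours $u$ of $v$ in $G_i$ with $c\in L(u)$ whose colour $c$ is not blocked by an activated $c$-coloured vertex \emph{outside} $V(G_i)$. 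Each summand can be concentrated separately — $\keptUnactColsRV$ because, by near-disjointness, the external blockers $w_u$ are essentially distinct across $u$, and $\uncolActivatedNbrsRV$ by declaring the rare outcomes where some vertex near $v$ in $G_i$ has more than $\log D$ like-coloured neighbours to be exceptional, with probability $D^{O(1)}(\log D)^{-\log D}$; this is precisely why the Bruhn--Joos version of Talagrand's inequality (Theorem~\ref{thm:bj-tala}), with its exceptional-outcomes device, is invoked rather than a plain martingale inequality. The split plus the exceptional outcomes is the central idea missing from your sketch.

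Two smaller remarks. Your worry about a ``coordinated'' choice of the truncated lists $L'$ is a red herring: truncating $L'(u)$ only decreases every colour degree $d_{G_i-X,L'}(v,c)$, so once the available lists are large enough and the potential colour degrees small enough, \emph{any} truncation to the prescribed size works — the paper does this in one sentence. Also, the paper first applies an embedding (Proposition~\ref{prop:embedding}) to arrange that $|\{i:v\in V(G_i)\}|=C$ and $d_{G_i,L}(v,c)=D$ hold with equality, making the expectations exactly $\newlambda$ and $\newdeg$ rather than one-sided bounds; this is a convenience rather than a necessity, but worth noting since it keeps the concentration statements and the Local Lemma bookkeeping clean.
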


We prove Lemma~\ref{lemma:nibble} in Section~\ref{section:nibble} by analyzing a random coloring procedure, but we already explain some of the ideas involved in the proof now.
In our random coloring procedure, we randomly assign each vertex $v \in V(G)$ a color $\psi(v) \in L(v)$ uniformly at random from its list, and we `activate' vertices independently at random with probability $p$.  (We really only need to assign colors to activated vertices, but for technical reasons it is convenient to define the procedure this way.)
For each vertex, we remove any color from its list assigned to an activated neighbor, and we let $X$ be the set of activated vertices whose color was not assigned to any of its activated neighbors.  We obtain $L'$ by further truncating each vertex's list to have the desired size, and we show that with nonzero probability, $X$, $L'$, and $\phi \coloneqq \psi|_X$ satisfy \ref{nibble:list-size} and \ref{nibble:color-degree}.

To that end, assuming without loss of generality that $d_{G_i, L}(v, c) = D$ for every $i \in [m]$ and $v \in V(G_i)$ with $c \in L(v)$ (see Proposition~\ref{prop:embedding}), it is straightforward to show that a vertex `keeps' a given color in its list with probability $(1 - p / \Lambda)^{DC}$ (see Proposition~\ref{prop:keep}).
Thus, after applying this procedure, $\newlambda(\Lambda, D, C, p)$ is the expected number of remaining available colors for each vertex, and $\newdeg(\Lambda, D, C, p)$ is an upper bound on the expected color degree of each pair of vertex and color from its list (see \eqref{eqn:degree-overcount} and Lemma~\ref{lemma:expectations}).  We show that with very high probability (at least $1 - \exp\left(-D^{1/4}\right)$ -- see Lemma~\ref{lemma:concentrations}), the number of remaining colors available for a given vertex, and the color degree in $G - X$ of a given pair of vertex and color, are not significantly larger than $\newlambda(\Lambda, D, C, p)$ and $\newdeg(\Lambda, D, C, p)$, respectively.  We complete the proof by applying the Lov\'{a}sz Local Lemma.

To prove Theorem~\ref{thm:main}, we iteratively apply Lemma~\ref{lemma:nibble} $O(C\eps^{-1}\log D)$ times, each time reapplying the lemma with $p = \log^{-1}D$ and with $G_1 - X, \dots, G_m - X$ and $L'$ playing the roles of $G_1, \dots, G_m$ and $L$, respectively, before completing the coloring with Lemma~\ref{lemma:finishing-blow} below.  The requirement $L'(v) \subseteq L(v) \setminus \{\phi(u) : u \in N_G(v) \cap X\}$ ensures that adjacent vertices are not assigned the same color in different iterations, so each iteration extends a proper partial coloring of $G$.

 This approach -- known as the `nibble method' or the `semi-random method' -- has led to numerous important developments in graph coloring (see \cite{MR02, KKKMOsurvey}).
 Our random coloring procedure was in fact already used by Reed and Sudakov~\cite{ReSu02} to prove the special case of Theorem~\ref{thm:main} when $C = 1$; however, our analysis of the procedure is different even in this special case, drawing ideas from a recent proof of Kang and Kelly~\cite{KK20}, and we need new ideas to analyze the procedure for $C > 1$.
 Recall that Theorem~\ref{thm:main} also implies Kahn's~\cite{K96} bound on the chromatic index of linear uniform hypergraphs and that Theorem~\ref{thm:main-corr} implies Molloy and Postle's~\cite{M18} generalization of Kahn's result to correspondence coloring.  These edge-coloring results are also proved with a semi-random approach, but the random coloring procedures used in these proofs are slightly different from ours.
Since vertices may lose colors unnecessarily in our procedure---it would suffice to only remove colors from a vertex's list that were assigned to a neighbor in $X$, but this version of the procedure would be more challenging to analyze---Reed and Sudakov~\cite{ReSu02} called the procedure `wasteful' (see also \cite[Chapter 12]{MR02}).  Nevertheless, this wastefulness is negligible because our activation probability $p$ is small.    However, Kahn~\cite{K96} did not consider activation probabilities and consequently could not afford to use a wasteful variant of his procedure.  Molloy and Postle's~\cite{M18} proof does consider activation probabilities but is still different from ours, even in the special case of edge coloring; in particular, vertices (of the line graph) may be assigned multiple colors in their coloring procedure.

Let us now explain how we use Lemma~\ref{lemma:nibble} to prove Theorem~\ref{thm:main}.
Crucially, \ref{nibble:list-size} and \ref{nibble:color-degree} together imply that after each iteration of Lemma~\ref{lemma:nibble}, the ratio of the number of remaining available colors for each vertex to the maximum remaining color degree in each $G_i$, while initially only $1 + \eps$, improves by a factor of at least $1 + \eps p / 4$.  Moreover, the number of available colors does not decrease too much in each iteration.  The following proposition makes this calculation precise.

\begin{proposition}\label{prop:calculations}
  For every $C\geq1$ and $0 < \eps < 1$ there exists $D_{\ref{prop:calculations}}$ such that the following holds for every $D \geq D_{\ref{prop:calculations}}$.
  If $\log^{-1}D \geq p \geq \log^{-2}D$ and $10CD \geq \Lambda \geq (1 + \eps)D$, then
  \begin{equation}\label{eqn:ratio-improvement}
    \frac{\newlambda(\Lambda, D, C, p) - \Lambda^{4/5}}{\newdeg(\Lambda, D, C, p) + D^{4/5}} \geq (1 + \eps p / 4)\frac{\Lambda}{D}
  \end{equation}
  and
  \begin{equation}\label{eqn:degree-maintained}
    \newdeg(\Lambda, D, C, p) \geq (1 - pC)D.
  \end{equation}
\end{proposition}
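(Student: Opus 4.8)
The plan is to substitute $\beta \coloneqq (1 - p/\Lambda)^D$, so that $\newlambda(\Lambda, D, C, p) = \beta^C \Lambda$ and $\newdeg(\Lambda, D, C, p) = SD$ with $S \coloneqq (1-p)\beta^{C-1} + p(1 - \beta^C)$; both \eqref{eqn:ratio-improvement} and \eqref{eqn:degree-maintained} then reduce to elementary estimates for $\beta$ and $S$. Two observations drive the argument. First, Bernoulli's inequality together with $\Lambda \ge (1+\eps)D$ gives $1 - \beta \le D(p/\Lambda) \le p/(1+\eps) < p$, so $\gamma \coloneqq 1 - \beta$ is comparable to $p$ and $\beta \in (0,1)$. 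Second, since $p \le \log^{-1}D$ we have $p, \gamma, Cp = o(1)$ as $D \to \infty$, whereas since $p \ge \log^{-2}D$ and $D^{1/5}/\log^2 D \to \infty$, the correction terms $\Lambda^{4/5}$ and $D^{4/5}$ only contribute relative errors of order $D^{-1/5} = o(p)$.

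For \eqref{eqn:degree-maintained} it suffices to show $S \ge 1 - Cp$, and since $p(1-\beta^C) \ge 0$ it is enough that $(1-p)\beta^{C-1} \ge 1 - Cp$. Because the exponent $C-1$ may be less than $1$, I bound $\beta^{C-1} = (1-\gamma)^{C-1}$ not by Bernoulli but using $\ln(1-\gamma) \ge -\gamma/(1-\gamma)$: this gives $\beta^{C-1} \ge \exp\!\big(-(C-1)\gamma/(1-\gamma)\big) \ge 1 - (C-1)\gamma/(1-\gamma) \ge 1 - (C-1)p$, the last step using $\gamma/(1-\gamma) \le p$, which holds once $D$ is large since $\gamma \le p/(1+\eps)$ and $p \to 0$. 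Hence $(1-p)\beta^{C-1} \ge (1-p)(1-(C-1)p) = 1 - Cp + (C-1)p^2 \ge 1 - Cp$.

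For \eqref{eqn:ratio-improvement}, dividing both sides by $\Lambda/D > 0$ and simplifying reduces the claim to $\frac{\beta^C - \Lambda^{-1/5}}{S + D^{-1/5}} \ge 1 + \eps p/4$. Since $\Lambda^{-1/5} \le D^{-1/5}$ and $S + D^{-1/5} > 0$, it suffices to prove $\frac{\beta^C - D^{-1/5}}{S + D^{-1/5}} \ge 1 + \eps p/4$, and cross-multiplying (then using $S \le 1$ and $\eps p/4 \le 1$) this follows from the ``gap bound'' $\beta^C - S \ge \eps p/4 + 3D^{-1/5}$. Now $\beta^C - S = \beta^{C-1}(p - \gamma) - p(1-\beta^C)$: the first term is at least $(1-(C-1)p)\cdot\frac{\eps p}{1+\eps}$, using $p - \gamma \ge p - p/(1+\eps) = \frac{\eps p}{1+\eps}$ and $\beta^{C-1} \ge 1 - (C-1)p$ from above; the second is at most $pC\gamma \le \frac{Cp^2}{1+\eps}$ by Bernoulli ($1 - \beta^C \le C\gamma$, valid since $C \ge 1$). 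Combining, $\beta^C - S \ge \frac{p}{1+\eps}\big(\eps - Cp(1+\eps)\big) \ge \frac{\eps p}{2(1+\eps)}$ once $D$ is large enough that $Cp(1+\eps) \le \eps/2$. Finally $\frac{\eps p}{2(1+\eps)} - \frac{\eps p}{4} = \frac{\eps p(1-\eps)}{4(1+\eps)}$, which for fixed $\eps \in (0,1)$ exceeds $3D^{-1/5}$ for all large $D$ since $p \ge \log^{-2}D$; this gives the gap bound and hence \eqref{eqn:ratio-improvement}.

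The only real difficulty is bookkeeping. Two points need care: (a) $C-1$ can be fractional, so Bernoulli is used only in the exponent-$\ge 1$ direction (on $(1-\gamma)^C$) while a logarithmic estimate handles $(1-\gamma)^{C-1}$; and (b) the genuine slack in the ratio — of order $\eps p$ — must dominate the $O(D^{-1/5})$ coming from the additive $\Lambda^{4/5}$ and $D^{4/5}$ terms, which is exactly where both parts of the hypothesis $\log^{-2}D \le p \le \log^{-1}D$ are used.
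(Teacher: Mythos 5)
Your proof is correct and follows essentially the same approach as the paper: both reduce \eqref{eqn:ratio-improvement} and \eqref{eqn:degree-maintained} to elementary estimates (Bernoulli-type bounds plus $\Lambda \geq (1+\eps)D$) on the quantity $(1-p/\Lambda)^D$, and both absorb the $\Lambda^{4/5}$ and $D^{4/5}$ error terms by noting that $D^{-1/5} = o(p)$ when $p \geq \log^{-2}D$. Your reformulation via the gap bound $\beta^C - S \geq \eps p/4 + 3D^{-1/5}$ is a clean rearrangement of the paper's bound $W \leq 1 - p\eps/3$ (where $W = (S + D^{-1/5})/\beta^C$), and your care with the possibly-fractional exponent $C-1$ is valid, though the paper sidesteps it by applying Bernoulli to the exponent $D(C-1)$, which is $\geq 1$ for large $D$ whenever $C > 1$.
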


\begin{proof}
  First we prove \eqref{eqn:degree-maintained}.
  Clearly,\COMMENT{By the well-known inequality $(1 + x / n)^n \geq 1 + x$ for $|x| \leq n$, and using that $\Lambda \geq D$, we have
  \begin{equation*}
    \left(1 - \frac{p}{\Lambda}\right)^{D(C-1)} \geq 1 - \frac{pD(C - 1)}{\Lambda} \geq 1 - p(C - 1).
  \end{equation*}}
  \begin{equation*}
    (1 - p)\left(1 - \frac{p}{\Lambda}\right)^{D(C-1)}D \geq (1 - p)(1 - p(C - 1))D \geq (1 - pC)D,
  \end{equation*}
  and \eqref{eqn:degree-maintained} follows immediately, since $p\left(1 - \left(1 - p / \Lambda\right)^{DC}\right) > 0$.

  Now we prove \eqref{eqn:ratio-improvement}.  By \eqref{eqn:degree-maintained}, since $\Lambda \leq 10CD$ and $\log^{-1} D \geq p \geq \log^{-2}D$,
  \begin{equation*}
    \frac{\Lambda^{4/5}}{\newdeg(\Lambda, D, C, p) + D^{4/5}} \leq \frac{10CD^{4/5}}{(1 - pC)D} \leq \frac{\eps p}{100}\cdot\frac{\Lambda}{D}.
  \end{equation*}
  Let
  \begin{equation*}
    W \coloneqq (1 - p)\left(1 - \frac{p}{\Lambda}\right)^{-D} + p\left(\left(1 - \frac{p}{\Lambda}\right)^{-DC} - 1\right) + \left(1 - \frac{p}{\Lambda}\right)^{-DC}D^{-1/5},
  \end{equation*}
  and note that 
  \begin{equation*}
    \frac{\newlambda(\Lambda, D, C, p)}{\newdeg(\Lambda, D, C, p) + D^{4/5}}
    =  \frac{1}{W}\cdot \frac{\Lambda}{D}.
  \end{equation*}
  Since $1 + x \leq (1 + x / n)^n$ for $x \leq |n|$ and $\Lambda \geq (1 + \eps)D$, we have
  \begin{equation*}
    W  \leq \frac{1 - p}{1 - p/(1 + \eps)} + p\left(\frac{1}{1 - pC/(1 + \eps)} - 1\right) + \frac{D^{-1/5}}{1 - pC/(1 + \eps)}\COMMENT{$\leq (1 - p)\left(1 + \frac{p}{1 + \eps} + \frac{2Cp^2}{(1 + \eps)^2}\right) + pC\left(\frac{p}{1 + \eps} + \frac{2p^2}{(1 + \eps)^2}\right) + 2D^{-1/5}$}
    \leq 1 - p + \frac{p}{1 + \eps} +O(p^2) \leq\COMMENT{since $\eps < 1$} 1 - \frac{p\eps}{3}.
  \end{equation*}
  Since $(1 - p\eps / 3)^{-1} \geq 1 + p\eps / 3$, by combining the inequalities above, we have that the left side of \eqref{eqn:ratio-improvement} is at least
  \begin{equation*}
    \left(1 + p\eps / 3 - \eps p  /100\right)\frac{\Lambda}{D} \geq (1 + \eps p / 4)\frac{\Lambda}{D},
  \end{equation*}
  as desired.
\end{proof}

We apply Lemma~\ref{lemma:nibble} iteratively until the ratio of the number of remaining available colors to the maximum remaining color degree reaches $8CD$: \eqref{eqn:ratio-improvement} will imply that this ratio improves by a factor of at least $1 + \eps / (4\log D)$, so that we only need at most $33C\eps^{-1} \log D$ iterations, and \eqref{eqn:degree-maintained} will ensure that carrying out this many iterations is indeed possible.  After this process, we can finish with the following result of Reed~\cite{Ree99} (which is proved via a simple application of the Lov\' asz Local Lemma).

\begin{lemma}[Reed~\cite{Ree99}]\label{lemma:finishing-blow}
  Let $G$ be a graph with list assignment $L$.  If $|L(v)| \geq 8D$ for every $v \in V(G)$ and $\Delta(G, L) \leq D$, then $G$ is $L$-colorable.
\end{lemma}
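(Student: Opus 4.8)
The plan is the standard one, indicated in the text preceding the statement: apply the Lov\'asz Local Lemma to a uniformly random colouring. We may assume $D \geq 1$, since if $D < 1$ then $\Delta(G,L) < 1$, hence $\Delta(G,L) = 0$, so no two adjacent vertices share a colour in their lists and any $\phi$ with $\phi(v) \in L(v)$ is a proper $L$-colouring. So I would colour each vertex $v \in V(G)$ with a colour $\phi(v) \in L(v)$ chosen uniformly at random, independently over the vertices. Since $\phi$ is a proper $L$-colouring precisely when no edge is monochromatic, for every edge $uv \in E(G)$ and every colour $c \in L(u) \cap L(v)$ I would introduce the bad event $B_{uv,c} \coloneqq \{\phi(u) = \phi(v) = c\}$; colours outside $L(u) \cap L(v)$ give events of probability $0$ and are discarded. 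If none of the $B_{uv,c}$ occurs then $\phi$ is a proper $L$-colouring, so it suffices to show $\Pr[\,\bigcap \overline{B_{uv,c}}\,] > 0$ via the Local Lemma.

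For the probabilities, $\Pr[B_{uv,c}] = 1/(|L(u)|\,|L(v)|) \leq 1/(8D)^2 \leq 1/4$. For the dependencies, $B_{uv,c}$ is a function of $(\phi(u),\phi(v))$ alone, hence mutually independent of every $B_{u'v',c'}$ with $\{u,v\} \cap \{u',v'\} = \emptyset$; its dependency neighbourhood therefore lies within the set of bad events that use $u$ or $v$. The bad events using a fixed vertex $u$ are exactly the $B_{uw,c'}$ with $w \in N_G(u)$ and $c' \in L(u) \cap L(w)$, and the crucial point is a cancellation of the list size:
\begin{align*}
  \sum_{\substack{w \in N_G(u) \\ c' \in L(u) \cap L(w)}} \Pr[B_{uw,c'}]
  &= \frac{1}{|L(u)|} \sum_{c' \in L(u)} \ \sum_{\substack{w \in N_G(u) \\ c' \in L(w)}} \frac{1}{|L(w)|} \\
  &\leq \frac{1}{|L(u)|} \sum_{c' \in L(u)} \frac{d_{G,L}(u,c')}{8D} \leq \frac{1}{8},
\end{align*}
where the first inequality uses $|L(w)| \geq 8D$ and the last uses $\sum_{c' \in L(u)} d_{G,L}(u,c') \leq |L(u)| \cdot \Delta(G,L) \leq |L(u)| \cdot D$, so that the factor $|L(u)|$ cancels. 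Adding the analogous bound for $v$, the probabilities of the bad events in the dependency neighbourhood of $B_{uv,c}$ sum to at most $1/4$.

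It then remains to invoke the Lov\'asz Local Lemma in the form: if every bad event $B$ satisfies $\Pr[B] \leq 1/4$ and the probabilities of the bad events in its dependency neighbourhood sum to at most $1/4$, then with positive probability no bad event occurs. (This follows from the general asymmetric Local Lemma by giving each bad event $B$ weight $2\Pr[B] \in [0,1/2]$ and using $1-x \geq 4^{-x}$ for $x \in [0,1/2]$.) Both hypotheses hold by the estimates above, so a proper $L$-colouring of $G$ exists.

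The step I expect to need the most care is the choice of bad events. Taking one bad event per monochromatic edge does not work, because the hypotheses control neither $|L(u) \cap L(v)|$ nor the ordinary degrees of $G$, leaving both the probability of such an event and the size of its dependency neighbourhood unbounded. Splitting each edge-event by the repeated colour and passing to the sum-of-probabilities form of the Local Lemma resolves this: the governing quantity becomes $\sum_{c' \in L(u)} d_{G,L}(u,c') \leq |L(u)| \cdot D$, and the $|L(u)|$ cancels against the $1/|L(u)|$ coming from $\Pr[\phi(u) = c']$, leaving a bound independent of the list sizes.
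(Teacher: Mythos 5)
Your proof is correct. The paper does not prove Lemma~\ref{lemma:finishing-blow}; it cites Reed~\cite{Ree99} and remarks only that it ``is proved via a simple application of the Lov\'asz Local Lemma,'' which is exactly what you supply. Your key observation — that one must split each potentially monochromatic edge into a separate bad event $B_{uv,c}$ per colour and then use the weighted (asymmetric) form of the Local Lemma, because neither $\Delta(G)$ nor $|L(u)\cap L(v)|$ is controlled by the hypotheses, so the symmetric $4pd\le 1$ form is inapplicable — is the right one, and the cancellation $\sum_{c'\in L(u)}d_{G,L}(u,c')\le |L(u)|\cdot D$ is precisely what makes the sum over the dependency neighbourhood come out to $\le 1/4$ independently of list sizes and degrees. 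The weighted LLL step with $x_B=2\Pr[B]$ and the inequality $1-x\ge 4^{-x}$ on $[0,1/2]$ (which holds since both sides agree at the endpoints and the left side is affine while the right is convex) is also sound.
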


Now we prove Theorem~\ref{thm:main}.

\begin{proof}[Proof of Theorem~\ref{thm:main}]
    Without loss of generality, we may assume that $\eps < 1$.\COMMENT{We also assume $C \geq 1$, or else \ref{hypo:containment-bound} implies that each $G_i$ is empty.}  Let $p \coloneqq \log^{-1} D$, $D_0 \coloneqq D$ and $\Lambda_0 \coloneqq (1 + \eps)D_0$, and for each integer $0\le i \le 33C/(\eps p)$, let
  \begin{align*}
    \Lambda_{i + 1} &\coloneqq \newlambda(\Lambda_i, D_i, C, p) - \Lambda_i^{4/5} && \mathrm{and}\\
    D_{i + 1} &\coloneqq \newdeg(\Lambda_i, D_i, C, p) + D_i^{4/5}.
  \end{align*}
  Applying Proposition~\ref{prop:calculations} inductively,
  for every integer $0\le i \le 33C/(\eps p)$, by \eqref{eqn:degree-maintained} we have
  \begin{equation*}
    D_{i} \ge \left(1 - pC\right)^{33C/(\eps p)}D_0 \geq e^{-33 C^2 / \eps}(1 - 33C^3p / \eps)D_0 \geq e^{-34 C^2 / \eps}D_0,
  \end{equation*}
  where the second inequality uses that $(1 + x / n)^n \geq e^x(1 - x^2 / n)$ for $|x| \leq n$\COMMENT{with $33C / (\eps p)$ playing the role of $n$ and $-33C^2 / \eps$ playing the role of $x$}.
  By \eqref{eqn:ratio-improvement},
  \begin{equation*}
    \frac{\Lambda_{i}}{D_{i}} \geq \left(1 + \frac{\eps p}{4}\right)\frac{\Lambda_{i-1}}{D_{i-1}}.
  \end{equation*}
  Moreover, we have
  \begin{equation*}
    \frac{\lceil \Lambda_{\lfloor 33C/(\eps p)\rfloor} \rceil}{D_{\lfloor 33C/(\eps p)\rfloor}}
    \geq \left(1 + \frac{\eps p}{4}\right)^{\lfloor 33C/(\eps p) \rfloor}\frac{\Lambda_0}{D_0}
    \geq 8C.
  \end{equation*}
  In particular, there exists an integer $0 < i^* \leq 33C / (\eps p)$ such that $\lceil\Lambda_{i^*}\rceil / D_{i^*} \geq 8C$ and
  $\lceil\Lambda_{i^* - 1}\rceil / D_{i^* - 1} < 8C$.  We may assume $D$ is sufficiently large so that $D_{i^*} \geq D_{\ref{lemma:nibble}}$.

  By \ref{hypo:list-bound}, we may assume without loss of generality that $|L(v)| = \lceil\Lambda_0\rceil$ for every $v \in V(G)$, since we can truncate each list until equality holds.  Now let $G_{j,0} \coloneqq G_j$ for each $j \in [m]$, let $H_0 \coloneqq G$, and let $L_0 \coloneqq L$. Due to the above calculations, inductively by Lemma~\ref{lemma:nibble}, for each integer $0\le i < i^*$, there is a set $X_i \subseteq V(H_i)$ and an $L_i|_{X_i}$-coloring $\phi_i$ of $H_i[X_i]$ and a list assignment $L_{i+1}$ for $H_{i+1}\coloneqq H_i - X_i$ satisfying $L_{i+1}(v) \subseteq L_i(v) \setminus \{\phi_i(u) : u \in N_{H_i}(v) \cap X_i\}$ such that
  \begin{itemize}
  \item $\Delta(G_{j, i + 1}, L_{i + 1}|_{V(G_{j, i+1})}) \leq D_{i + 1}$ for each $j \in [m]$ and
  \item $|L_{i + 1}(v)| = \lceil \Lambda_{i + 1}\rceil \ge (1+\eps) D_{i+1}$,
  \end{itemize}
  where $G_{j, i + 1} \coloneqq G_{j, i} - X_i$.  
  
  Let $X \coloneqq \bigcup_{i=1}^{i^*-1} X_i$, let $\phi(v) \coloneqq \phi_i(v)$ if $v \in X_i$ for some integer $0\le i < i^*$, let $G' \coloneqq H_{i^*}$, and let $L' \coloneqq L_{i^*}$.  By construction, $\phi$ is an $L|_X$-coloring of $G[X]$, and by the choice of $i^*$, we have
  \begin{itemize}
  \item $|L'(v)| \geq 8C D_{i^*}$ and
  \item $\Delta(G', L') \leq C\max\{\Delta(G_{j, i^*}, L_{i^*}|_{V(G_{j,i^*})}) : {j \in [m]}\} \leq CD_{i^*}$.
  \end{itemize}
  Therefore by Lemma~\ref{lemma:finishing-blow}, $G'$ has an $L'$-coloring $\phi'$, and we can combine $\phi$ and $\phi'$ to obtain an $L$-coloring of $G$, as desired.
\end{proof}

\section{Probabilistic tools}\label{section:tools}

In this section we provide some probabilistic tools used in the proof of Lemma~\ref{lemma:nibble} in Section~\ref{section:nibble}.  The first such tool is the Lov\'asz Local Lemma.

\begin{lemma}[Lov\'asz Local Lemma~\cite{EL75}]\label{local lemma}
Let $p\in[0,1)$ and $\mathcal A$ a finite set of events such that for every $A\in\mathcal A$,
\begin{itemize}
\item $\Prob{A} \leq p$, and
\item $A$ is mutually independent of a set of all but at most $d$ other events in $\mathcal A$.
\end{itemize}
If $4pd\leq 1$, then the probability that none of the events in $\mathcal A$ occur is strictly positive.
\end{lemma}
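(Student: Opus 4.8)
The plan is to give the standard inductive proof of the symmetric Lov\'asz Local Lemma. Its core is an auxiliary claim: conditioning on the non-occurrence of an arbitrary sub-collection of the events in $\mathcal A$ changes the probability of any one remaining event by at most a factor of $2$. Granting this, the conclusion follows by a telescoping product.

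First I would prove, by induction on $|S|$, that for every $A \in \mathcal A$ and every $S \subseteq \mathcal A \setminus \{A\}$ with $\Prob{\bigwedge_{B \in S}\overline B} > 0$ we have $\Prob{A \mid \bigwedge_{B \in S}\overline B} \le 2p$. The base case $S = \emptyset$ holds since $\Prob{A} \le p$. For the inductive step, partition $S = S_1 \cup S_2$, where $S_1$ collects those events of $S$ on which $A$ may depend, so $|S_1| \le d$, and $S_2 := S \setminus S_1$. If $S_1 = \emptyset$, then $A$ is mutually independent of $\{\overline B : B \in S_2\}$, hence $\Prob{A \mid \bigwedge_{B\in S}\overline B} = \Prob{A} \le p \le 2p$. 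Otherwise, write $\Prob{A \mid \bigwedge_{B\in S}\overline B}$ as the ratio of $\Prob{A \wedge \bigwedge_{B\in S_1}\overline B \mid \bigwedge_{B\in S_2}\overline B}$ to $\Prob{\bigwedge_{B\in S_1}\overline B \mid \bigwedge_{B\in S_2}\overline B}$. The numerator is at most $\Prob{A \mid \bigwedge_{B\in S_2}\overline B} = \Prob{A} \le p$, since $A$ is mutually independent of the family $S_2$. Enumerating $S_1 = \{B_1, \dots, B_k\}$ with $k \le d$, I would bound the denominator below by expanding it as $\prod_{j=1}^{k}\Prob{\overline{B_j} \mid \bigwedge_{i < j}\overline{B_i} \wedge \bigwedge_{B\in S_2}\overline B}$ and applying the inductive hypothesis to each $B_j$ with conditioning set $\{B_1, \dots, B_{j-1}\} \cup S_2$, which has size at most $|S| - 1$; this makes each factor at least $1 - 2p$, so the denominator is at least $(1-2p)^k \ge (1-2p)^d$. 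Therefore $\Prob{A \mid \bigwedge_{B\in S}\overline B} \le p / (1-2p)^d$, and since $4pd \le 1$ gives $(1 - 2p)^d \ge 1 - 2pd \ge 1/2$ by Bernoulli's inequality, this is at most $2p$, completing the induction.

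With the claim in hand, I would enumerate $\mathcal A = \{A_1, \dots, A_n\}$ and compute $\Prob{\bigwedge_{i=1}^{n}\overline{A_i}} = \prod_{i=1}^{n}\Prob{\overline{A_i} \mid \bigwedge_{j < i}\overline{A_j}} \ge (1 - 2p)^n > 0$, invoking the claim at each step (note $4pd \le 1$ with $d \ge 1$ forces $p \le 1/4$, so $1-2p>0$); the positivity of the partial conjunctions $\bigwedge_{j<i}\overline{A_j}$ that the conditioning requires falls out of the same inductive bound. The degenerate case $d = 0$, where every event is mutually independent of all the others, is handled separately: the claim then yields a bound of $p$ rather than $2p$, so $\Prob{\bigwedge_i \overline{A_i}} \ge (1-p)^n > 0$ since $p < 1$. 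I do not anticipate a genuine obstacle; the one place demanding care is the inductive step, where one must verify that each recursive call uses a strictly smaller conditioning set (so the induction is well-founded), that mutual independence of $A$ from the \emph{entire} family $S_2$ is what licenses discarding the $S_2$-conditioning in the numerator, and that all conditional probabilities appearing are well defined — the last point being itself a byproduct of the bounds being proved. The hypothesis $4pd \le 1$ enters exactly once, to ensure $(1 - 2p)^d \ge 1/2$ and thus close the induction with the target factor $2p$.
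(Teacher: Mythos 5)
The paper states this lemma as a black-box citation to Erd\H{o}s and Lov\'asz (1975) and gives no proof of its own, so there is no internal argument to compare against. Your proposal is the standard inductive proof of the symmetric Lov\'asz Local Lemma: the auxiliary claim $\Prob{A \mid \bigwedge_{B\in S}\overline B} \le 2p$, the split $S = S_1 \cup S_2$ into dependent and independent events, the numerator bound via mutual independence from $S_2$, the telescoping lower bound $(1-2p)^{|S_1|}$ on the denominator via the inductive hypothesis on strictly smaller conditioning sets, and the use of $4pd\le 1$ exactly once to get $(1-2p)^d \ge 1-2pd \ge 1/2$. The bookkeeping is in order: you verify $|S_1|\le d$, that each recursive conditioning set has size at most $|S|-1$, that positivity of all conditional probabilities propagates, and you correctly separate the degenerate case $d=0$ where $4pd\le 1$ does not force $p\le 1/4$ but where mutual independence makes the argument trivial. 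This is a complete and correct derivation of the stated constant-$4$ version.
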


Next we need a concentration inequality of Bruhn and Joos~\cite{BrJo18}, derived from Talagrand's inequality~\cite{T95}.  To that end, we introduce the following definition.

\begin{definition}
  Let $((\sampleSpace_i, \sigmaAlg_i, \mathbb P_i))$ be probability spaces, and let $(\sampleSpace, \sigmaAlg, \mathbb P)$ be their product space.  
  We say a random variable $\X : \sampleSpace \rightarrow \mathbb R$ has \textit{upward $(s, \change)$-certificates} with respect to a set of exceptional outcomes $\sampleSpace^* \subseteq \sampleSpace$ if for every $\omega\in\sampleSpace\setminus\sampleSpace^*$ and every $t > 0$, there exists an index set $I$ of size at most $s$ so that $\X(\omega') \geq \X(\omega) - t$ for every $\omega' \in \sampleSpace\setminus \sampleSpace^*$ for which the restrictions $\omega|_I$ and $\omega'|_I$ differ in at most $t/\change$ coordinates.
\end{definition}

\begin{theorem}[Bruhn and Joos~\cite{BrJo18}]\label{thm:bj-tala}
  Let $((\sampleSpace_i, \sigmaAlg_i, \mathbb P_i))$ be probability spaces, let $(\sampleSpace, \sigmaAlg, \mathbb P)$ be their product space, and let $\sampleSpace^*\in \sigmaAlg$ be a set of exceptional outcomes.  Let $\X : \sampleSpace \rightarrow \mathbb R$ be a non-negative random variable, let $M \coloneqq \max\{\sup \X, 1\}$, and let $\change \geq 1$.  If $\Prob{\sampleSpace^*} \leq M^{-2}$ and $\X$ has upward $(s, \change)$-certificates, then for $t > 50 \change\sqrt s$,
  \begin{equation*}
    \Prob{|\X - \Expect{\X}| \geq t} \leq 4\exp\left(-\frac{t^2}{16\change^2s}\right) + 4\Prob{\sampleSpace^*}.
  \end{equation*}
\end{theorem}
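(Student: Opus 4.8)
The plan is to deduce Theorem~\ref{thm:bj-tala} from Talagrand's inequality~\cite{T95}, the one genuinely new ingredient being the bookkeeping for the exceptional set $\sampleSpace^*$. First I would reduce two-sided concentration around $\Expect{\X}$ to two one-sided tail bounds around a median $\beta$ of $\X$ (chosen, as usual, so that $\Prob{\X \ge \beta} \ge 1/2$ and $\Prob{\X \le \beta} \ge 1/2$). Two degenerate cases are immediate: if $M = 1$ then $0 \le \X \le 1$, so $|\X - \Expect{\X}| \le 1 < 50 \le 50\change\sqrt{s} < t$ (using $\change \ge 1$ and that we may assume $s \ge 1$, as otherwise the size-$\le s$ certificates are empty and $\X$ is constant on $\sampleSpace \setminus \sampleSpace^*$), and the left-hand side of the claimed inequality vanishes; and if $\Prob{\sampleSpace^*} \ge 1/4$ then $4\Prob{\sampleSpace^*} \ge 1$ and the bound is trivial. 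So assume $M \ge 2$, hence $\Prob{\sampleSpace^*} \le M^{-2} \le 1/4$.

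Set $A \coloneqq \{\omega \in \sampleSpace \setminus \sampleSpace^* : \X(\omega) \ge \beta\}$ and $A' \coloneqq \{\omega \in \sampleSpace \setminus \sampleSpace^* : \X(\omega) \le \beta\}$, so that $\Prob{A}, \Prob{A'} \ge 1/2 - \Prob{\sampleSpace^*} \ge 1/4$. I would apply the convex-distance form of Talagrand's inequality --- for every measurable $B$ with $\Prob{B} > 0$ and every $a \ge 0$, $\Prob{\rho(B, \cdot) \ge a} \le e^{-a^2/4}/\Prob{B}$, where $\rho(B, \omega)$ is the Talagrand convex distance from $\omega$ to $B$ --- to $B = A'$ and to $B = A$; using the \emph{good} level sets here is forced, since the hypothesis only controls $\X$ relative to points off $\sampleSpace^*$. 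For the upper tail, fix $\omega \in \sampleSpace \setminus \sampleSpace^*$ with $\X(\omega) > \beta + a\change\sqrt{s}$, let $I$ be the index set of size at most $s$ supplied by the hypothesis applied to $\omega$ itself with parameter $t = a\change\sqrt{s}$, and take the weight vector equal to $s^{-1/2}$ on $I$ and $0$ off $I$ (of Euclidean norm at most $1$): if some $\omega' \in A'$ disagreed with $\omega$ in at most $a\sqrt{s}$ coordinates of $I$, the certificate would force $\X(\omega') \ge \X(\omega) - a\change\sqrt{s} > \beta$, contradicting $\omega' \in A'$; hence $\rho(A', \omega) \ge a$, and so $\Prob{\X > \beta + a\change\sqrt{s}} \le 4 e^{-a^2/4} + \Prob{\sampleSpace^*}$. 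For the lower tail one similarly wants $\rho(A, \omega) \ge a$ whenever $\omega \in \sampleSpace \setminus \sampleSpace^*$ and $\X(\omega) < \beta - a\change\sqrt{s}$; this needs the convex-hull characterization of $\rho$ together with the fractional-selection argument from the proof of the certifiable-Lipschitz form of Talagrand's inequality~\cite{T95}, now fed the size-$\le s$ certificates of the points of $A$ furnished by our hypothesis. The key observation making the adaptation painless is that the only point this argument needs to ``reconstruct'' and evaluate $\X$ at is $\omega$ itself, which lies in $\sampleSpace \setminus \sampleSpace^*$; so the fact that the hypothesis controls $\X$ only at good points is no obstacle inside the argument, and $\sampleSpace^*$ enters only through $\Prob{A}$ and an additive term. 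This yields $\Prob{\X < \beta - a\change\sqrt{s}} \le 4 e^{-a^2/4} + \Prob{\sampleSpace^*}$.

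It remains to pass from the median to the mean and to fix the constants. Integrating the two tail bounds gives $|\beta - \Expect{\X}| = O(\change\sqrt{s}) + \int_{\sampleSpace^*} \X \, \mathrm{d}\mathbb P \le O(\change\sqrt{s}) + M \Prob{\sampleSpace^*}$, and here is the one place the full strength $\Prob{\sampleSpace^*} \le M^{-2}$ (rather than merely $\le 1/4$) is used: it gives $M\Prob{\sampleSpace^*} \le 1$, so the median lies within $O(\change\sqrt{s})$ of $\Expect{\X}$. Substituting this into the tail bounds with $a \approx t/(\change\sqrt{s})$, the numerical constants $50$ in the threshold, $16$ in the exponent, and the leading factor $4$ are exactly what is needed so that, for $t > 50\change\sqrt{s}$, the two tails together with the median shift combine into $4\exp(-t^2/(16\change^2 s)) + 4\Prob{\sampleSpace^*}$; this last step is routine constant-chasing.

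The main obstacle is precisely that Talagrand's inequality cannot be invoked as a black box. The hypothesis relates $\X(\omega)$ and $\X(\omega')$ only when both $\omega$ and $\omega'$ avoid $\sampleSpace^*$, and there is no clean way to modify $\X$ on $\sampleSpace^*$ so as to land in the classical Lipschitz-plus-certifiable framework --- for instance, the lower $\change$-Lipschitz envelope of $\X$ through $\sampleSpace \setminus \sampleSpace^*$ does agree with $\X$ off $\sampleSpace^*$ and is $\change$-Lipschitz on all of $\sampleSpace$, but it need not be certifiable. One therefore has to re-run Talagrand's convex-distance argument with the relativized hypothesis inserted at the point indicated above, carrying the error term $\Prob{\sampleSpace^*}$ through every probability estimate and checking via $\Prob{\sampleSpace^*} \le M^{-2}$ that it never dominates the sub-Gaussian tail; getting the explicit constants $50$, $16$ and the factor $4$ to come out is the bulk of the remaining work.
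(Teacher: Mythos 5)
The paper does not contain a proof of Theorem~\ref{thm:bj-tala}; it is stated as a black-box concentration inequality imported from Bruhn and Joos~\cite{BrJo18}. There is therefore no internal proof to compare your sketch against.

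Evaluated on its own, your outline is structurally sensible and follows the standard route for exceptional-outcomes variants of Talagrand's inequality: reduce to one-sided tails about a median $\beta$; apply the convex-distance inequality to the level sets $A$ and $A'$ restricted to the good part $\sampleSpace\setminus\sampleSpace^*$ (whence the leading factor $4$, since $\Prob{A},\Prob{A'}\ge 1/4$); lower-bound the convex distance using the upward $(s,\change)$-certificates; and control the median-to-mean shift coming from the exceptional contribution via $M\Prob{\sampleSpace^*}\le 1$, which is exactly where the hypothesis $\Prob{\sampleSpace^*}\le M^{-2}$ earns its keep. Your key observation for the upper tail --- that the argument only needs to evaluate $\X$ at the single point $\omega$, which already lies off $\sampleSpace^*$, so the relativization to good points costs nothing at that step --- is the right one. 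However, the lower-tail step, which you correctly identify as requiring the convex-hull characterization of $\rho$ together with a fractional-selection argument over the certificates of points in $A$, is the technical heart of the result and is invoked by name rather than carried out; and the explicit constants $50$, $16$, and $4$ depend delicately on the precise form of Talagrand used and on how the two tails and the median shift are combined, so waving at ``routine constant-chasing'' is not a substitute for doing it. In short, this is a reasonable roadmap of how Bruhn and Joos's proof plausibly goes, but not a proof; to present a complete argument you would need to reproduce or cite their details for the lower tail and the numerics.
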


\section{Proof of Lemma~\ref{lemma:nibble}}\label{section:nibble}

This section is devoted to the proof of Lemma~\ref{lemma:nibble}.  It will be convenient to assume that equality holds in \ref{hypo:containment-bound} and \ref{hypo:degree-bound} of Theorem~\ref{thm:main} and that moreover $d_{G_i, L}(v, c) = D$ for every $v \in V(G)$ and $c \in L(v)$, so we first prove the following proposition which enables us to consider an embedding of $G$ for which these properties hold.  

\begin{restatable}{proposition}{EmbeddingProp}\label{prop:embedding}
  Let $C, \Lambda, D \in \mathbb N$, let $G_1, \dots, G_m$ be nearly disjoint graphs, and let $L$ be a list assignment for $G \coloneqq \bigcup_{i=1}^m G_i$.  If $|\{i \in [m] : v \in V(G_i)\}| \leq C$ and $|L(v)| = \Lambda$ for every $v \in V(G)$ and $\Delta(G_i, L|_{V(G_i)}) \leq D$ for every $i \in [m]$, then there exist nearly disjoint graphs $G'_1, \dots, G'_{m'}$ for some $m' \geq m$ and a list assignment $L'$ for $G' \coloneqq \bigcup_{i=1}^{m'}G'_i$ such that 
  \begin{enumerate}[(\ref*{prop:embedding}.1)]
  \item\label{embedding:embedding} $G_i \subseteq G'_i$ for every $i \in [m]$,
  \item\label{embedding:lists-preserved} $L'(v) = L(v)$ for every $v \in V(G)$,
  \item\label{embedding:containment} $|\{i \in [m'] : v \in V(G'_i)\}| = C$ for every $v \in V(G')$,
  \item\label{embedding:list-size} $|L'(v)| = \Lambda$ for every $v \in V(G')$, and
  \item\label{embedding:degree} $d_{G'_i, L'}(v, c) = D$ for every $i \in [m']$ and $v \in V(G'_i)$ with $c \in L'(v)$.
  \end{enumerate}
\end{restatable}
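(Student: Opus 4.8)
The plan is to establish the proposition by two successive \emph{padding} steps. In the first step I enlarge each $G_i$, on private fresh vertices, to a graph in which every colour has colour‑degree exactly $D$ while every list still has size exactly $\Lambda$; taking these enlargements with pairwise disjoint fresh vertex and colour sets preserves near‑disjointness (the pairwise intersections are unchanged, as the new vertices belong to only one graph), so this yields \ref{embedding:embedding}, \ref{embedding:lists-preserved}, \ref{embedding:list-size} and \ref{embedding:degree}, but typically not \ref{embedding:containment}. In the second step I add further fresh ``$(\Lambda,D)$‑regular'' auxiliary graphs so that the number of graphs through each vertex becomes exactly $C$, without disturbing the properties secured in the first step.

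For the first step it suffices to prove the following building block: any graph $H$ with a list assignment $L$ satisfying $|L(v)|=\Lambda$ for all $v$ and $\Delta(H,L)\le D$ embeds (on fresh new vertices, which may be given fresh colours) into a graph $H^\ast\supseteq H$ with a list assignment $L^\ast$ agreeing with $L$ on $V(H)$, with $|L^\ast(v)|=\Lambda$ for all $v$ and $d_{H^\ast,L^\ast}(v,c)=D$ for all $v$ and all $c\in L^\ast(v)$; one then applies it to each $G_i$. To prove the building block I would first construct, directly, a single finite $(\Lambda,D)$‑regular graph — one in which every list has size $\Lambda$ and every colour $c$ has colour‑degree exactly $D$ at every vertex whose list contains $c$ — for instance a suitable circulant graph on $\mathbb Z_N$ (with $N$ large) whose vertex‑lists are cyclic intervals of length $\Lambda$, the required uniformity of colour‑degrees being a short sliding‑window computation. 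Relabelling colours by any injection preserves $(\Lambda,D)$‑regularity, so for every $\Lambda$‑set $\ell$ we obtain such a graph with a designated vertex whose list is $\ell$ and all of whose remaining colours are fresh. Given $H$, I then repeatedly pick a deficient pair $(v,c)$ with $d_{H,L}(v,c)<D$ and attach fresh neighbours of $v$ carrying colour $c$ (together with fresh colours), saturating each of them by splicing in a relabelled copy of the regular host; the delicate point is to route the new vertices in ``chains'' (consecutive vertices sharing exactly one colour) and close these chains into cycles, possibly amalgamating several chains, so that every new vertex ends with colour‑degree exactly $D$ and no colour‑degree is overshot. Each step strictly decreases $\sum_{v,c}\bigl(D-d_{H,L}(v,c)\bigr)$, so the process terminates; the only arithmetic point is a parity condition for closing the chains, which holds because $\sum_{v,c}\bigl(D-d_{H,L}(v,c)\bigr)\equiv \Lambda\,|V(H)|\,D \pmod 2$ and can, if necessary, be adjusted by adding one extra fresh vertex.

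For the second step, after the first we have nearly‑disjoint $(\Lambda,D)$‑regular graphs $\hat G_1,\dots,\hat G_m$ with each vertex in at most $C$ of them. For a vertex $v$ lying in $c_v<C$ of them I add $C-c_v$ new $(\Lambda,D)$‑regular graphs through $v$, each obtained by applying the building block to the single‑vertex graph on $v$ (so it contains $v$ with list $L(v)$ and is otherwise fresh), and I ensure each new graph meets each $\hat G_i$ in at most one vertex, e.g.\ by letting each new graph contain at most one vertex of $\bigcup_i V(G_i)$. This spawns a new layer of fresh vertices each in only one graph, so the step must be iterated; as in the first step, termination is obtained by ``closing up'': rather than making the auxiliary vertices of the new graphs entirely fresh, one amalgamates them, in matched batches of equal list, with designated vertices of further new $(\Lambda,D)$‑regular graphs, so that after finitely many rounds every vertex lies in exactly $C$ graphs. (Equivalently, one constructs by induction on $C$ a finite ``completion gadget'' — a nearly‑disjoint system of $(\Lambda,D)$‑regular graphs in which every vertex lies in exactly $C$ of them except for a rich interface of vertices of prescribed smaller multiplicity — and then identifies each vertex of $\bigcup_i V(G_i)$ of deficient multiplicity with an interface vertex of the right multiplicity and list.) Near‑disjointness survives because all amalgamations are along single vertices in otherwise disjoint copies, and colour‑degrees are untouched since the glued graphs are otherwise disjoint; \ref{embedding:embedding}--\ref{embedding:degree} then follow at once.

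The step I expect to be the real obstacle is not the algebraic construction of one $(\Lambda,D)$‑regular host (routine), but arranging the two padding phases so that they provably terminate and do not interfere: every fresh vertex introduced to repair one of \ref{embedding:containment}--\ref{embedding:degree} a priori lacks the others, so each padding must be made to ``close up'' on itself rather than generating ever‑larger collections, and getting this bookkeeping right — including the parity condition of the first phase and the matched amalgamations of the second — is where the work lies.
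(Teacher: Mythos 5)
Your proposal has the right spirit (pad with auxiliary regular graphs) but reverses the paper's order of operations and, more importantly, is missing the one idea that makes the colour-degree padding actually close up. The paper first handles \ref{embedding:containment} trivially by adjoining one-vertex graphs to the list $G_1,\dots,G_m$ until every vertex lies in exactly $C$ of them --- a move that costs nothing (a single-vertex graph has maximum colour degree $0$) and sidesteps your entire second phase. It then fixes colour degrees via a blowup that acts identically on every $G_\ell$, so \ref{embedding:containment} is preserved automatically: every copy of a vertex lies in the blowups of exactly the graphs the original vertex lay in. Your reverse ordering forces the containment padding to spawn fresh vertices of multiplicity $1$ and to iterate, and the ``completion gadget'' you invoke to terminate that iteration is essentially a special instance of the proposition being proved; constructing it ``by induction on $C$'' is neither carried out nor obviously non-circular.

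The deeper gap is in your first phase. The claim that each step strictly decreases $\sum_{v,c}\bigl(D-d_{H,L}(v,c)\bigr)$ is false as stated: attaching a fresh neighbour $u$ of $v$ with $c\in L(u)$ and $\Lambda-1$ fresh colours removes one unit of deficit at $(v,c)$ but creates roughly $\Lambda D$ units of new deficit at $u$. You propose to instantly saturate $u$ by splicing in a $(\Lambda,D)$-regular host, but that would overshoot $(u,c)$ by one (the host already gives $d(u,c)=D$, and the edge $uv$ adds one more); you would instead need a host that is $(\Lambda,D)$-regular except at one designated vertex where one designated colour has colour degree $D-1$, and the chain/cycle closure you invoke to manufacture and match up such defects is exactly where the entire difficulty lies, and it is not constructed. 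The device you are missing is the paper's colour-relabelling trick: blow each $G_\ell$ up into copies indexed by $[2D]^k$ ($k$ the number of colours) and replace the colour $c_j\in L(v)$ at copy $(v,x_1,\dots,x_k)$ by the tuple $(c_j,x_1,\dots,x_{j-1},0,x_{j+1},\dots,x_k)$. Two copies $(v,x)$ and $(v,y)$ then share \emph{exactly one} colour when $x,y$ differ in exactly one coordinate $j$ (namely the relabelled $c_j$) and share none when they differ in two or more. One can therefore glue a $\bigl(D-d_{G_i,L}(v,c_j)\bigr)$-regular graph between the copies of $v$ along the $j$-th coordinate and raise the colour degree for $c_j$ alone to exactly $D$, never overshooting and never touching any other colour. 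Nothing in your proposal plays the role of this relabelling, and without it the colour-degree phase does not close.
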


The proof of Proposition~\ref{prop:embedding} can be found in the appendix\NOTAPPENDIX{ of the arxiv version of this paper}.
For the remainder of this section, we let $C,\eps > 0$, and we let $D$ be sufficiently large, we let $p$ satisfy $\log^{-1} D \geq p \geq \log^{-2}D$, and we let $G\coloneqq \bigcup_{i=1}^mG_i$ with list assignment $L$ satisfying the hypotheses of Lemma~\ref{lemma:nibble}.  For convenience, we assume $\Lambda$ and $D$ are integers where it does not affect the argument.  By Proposition~\ref{prop:embedding}, we may assume without loss of generality that $|\{i \in [m] : v \in V(G_i)\}| = C$ for every $v\in V(G)$ and that $d_{G_i, L}(v, c) = D$ for every $i \in [m]$ and $v \in V(G)$ with $c \in L(v)$\COMMENT{by applying the lemma to $G'$ and $L'$ from Proposition~\ref{prop:embedding} to obtain $X$, $\phi$, and $L''$ (say), and considering $X \cap V(G)$, $\phi|_{V(G)}$, and $L''_{V(G)}$ instead}.

For every pair $(A, \psi)$ satisfying $A \subseteq V(G)$ and $\psi(v) \in L(v)$ for each $v\in V(G)$,
\begin{itemize}
\item let $L_{A, \psi}(v) \coloneqq L(v) \setminus \{\psi(u) : u \in N_G(v)\cap A\}$ for every $v \in V(G)$, and
\item let $X_{A, \psi} \coloneqq \{v \in A : \psi(v) \in L_{A, \psi}(v)\}$.
\end{itemize}
Note that $X_{A, \psi} = A \setminus \bigcup_{v \in A}\{u \in N_G(v) : \psi(u) = \psi(v)\}$.

We will consider the probability space $(\sampleSpace, \sigmaAlg, \mathbb P)$ of such pairs $(A, \psi)$ where
\begin{itemize}
\item each vertex in $G$ is in $A$ independently and uniformly at random with probability $p$ and
\item $\psi(v) \in L(v)$ is chosen independently and uniformly at random for each $v \in V(G)$.
\end{itemize}
Note that $(\sampleSpace, \sigmaAlg, \mathbb P)$ is the product space of $(\sampleSpace^{\mathrm{act}}_{v}, \sigmaAlg^{\mathrm{act}}_{v}, \mathbb P^{\mathrm{act}}_{v})$ and $(\sampleSpace^{\mathrm{col}}_{v}, \sigmaAlg^{\mathrm{col}}_{v}, \mathbb P^{\mathrm{col}}_{v})$ taken over all $v \in V(G)$, where $(\sampleSpace^{\mathrm{act}}_{v}, \sigmaAlg^{\mathrm{act}}_{v}, \mathbb P^{\mathrm{act}}_{v})$ models whether $v \in A$ and $(\sampleSpace^{\mathrm{col}}_{v}, \sigmaAlg^{\mathrm{col}}_{v}, \mathbb P^{\mathrm{col}}_{v})$ models the choice of $\psi(v)$.  We will use this fact to apply Theorem~\ref{thm:bj-tala}.

We will use the Lov\' asz Local Lemma to show that with nonzero probability, $X = X_{A, \psi}$ and $L' = L_{A, \psi}|_{V(G)\setminus X}$ (with lists possibly truncated) satisfy the lemma with $\phi = \psi|_X$.
To that end, we define the following random variables for each vertex $v\in V(G)$ and $c\in L(v)$ and $i \in [m]$:
\begin{itemize}
\item $\remainingColoursRV_v(A, \psi) \coloneqq \left|L_{A,\psi}(v)\right|$,
\item $\colDegRV_{v, c, i}(A, \psi) \coloneqq \left|\left\{u \in N_{G_i}(v) : c \in L_{A, \psi}(u)\right\} \setminus X_{A, \psi}\right|$,
\item $\uncolActivatedNbrsRV_{v, c, i}(A, \psi) \coloneqq \left|\left\{u \in N_{G_i}(v) \cap A : c \in L(u)\right\} \setminus X_{A, \psi}\right|$,
\item $\keptUnactColsRV_{v, c, i}(A, \psi) \coloneqq \left|\left\{u \in N_{G_i}(v) \setminus A : c \in L(u)\right\} \setminus \bigcup_{u \in A\setminus V(G_i) : \psi(u) = c}N_G(u)\right|$.
\end{itemize}
Note that\COMMENT{The terms in the first line together count the number of $u \in N_{G_i}(v) \cap A$ with $c \in L(u)$ that are not in $X_{A, \psi}$ and have $c \in  L_{A, \psi}(u)$.  The third term counts the number of $u \in N_{G_i}(v)\setminus A$ with $c \in L_{A, \psi}(u)$, and this quantity is at most $\keptUnactColsRV_{v, c, i}(A, \psi)$.}
\begin{multline}
  \label{eqn:degree-overcount}
  \colDegRV_{v, c, i}(A, \psi) = \uncolActivatedNbrsRV_{v, c, i}(A, \psi) - |\{u \in (N_{G_i}(v) \cap A)\setminus X_{A, \psi} : c \in L(u)\} \setminus \bigcup_{u \in A : \psi(u) = c}N_G(u)\}|  \\
  + |\{u \in N_{G_i}(v) \setminus A : c \in L(u)\} \setminus \bigcup_{u \in A : \psi(u) = c}N_G(u)\}|
  \leq \uncolActivatedNbrsRV_{v, c, i}(A, \psi) + \keptUnactColsRV_{v, c, i}(A, \psi).
\end{multline}

To apply the Lov\'asz Local Lemma, we first show that for each $v \in V(G)$, we have that $\remainingColoursRV_v$ is at least the quantity in \ref{nibble:list-size} with high probability, and for each $c \in L(v)$ and $i \in [m]$ such that $v \in V(G_i)$, we have that $\colDegRV_{v, c, i}$ is at most the quantity in \ref{nibble:color-degree} with high probability.  To that end, we first compute the expectation of these random variables and then apply Theorem~\ref{thm:bj-tala}.  However, it is not possible to apply Theorem~\ref{thm:bj-tala} to $\colDegRV_{v, c, i}$ directly to obtain any meaningful bound.  (Consider the case $C = m = 1$ when $G_1$ is complete.  Either no vertex in $A$ is assigned $c$, in which case $c \in L_{A, \psi}(u)$ for every vertex $u$, or some vertex  in $A$ is assigned $c$, in which case $c \notin L_{A, \psi}(u)$ for all $u \in V(G)$.)  Nevertheless, we can apply Theorem~\ref{thm:bj-tala} to both $\uncolActivatedNbrsRV_{v, c, i}$ and $\keptUnactColsRV_{v, c, i}$ and moreover show that $\uncolActivatedNbrsRV_{v, c, i} + \keptUnactColsRV_{v, c, i}$ is at most the quantity in \ref{nibble:color-degree} with high probability.  Hence, \eqref{eqn:degree-overcount} implies the required bound for $\colDegRV_{v, c, i}$ as well.

In order to compute the expected values of $\remainingColoursRV_v$, $\uncolActivatedNbrsRV_{v, c, i}$, and $\keptUnactColsRV_{v, c, i}$, we need the following simple proposition.
\begin{proposition}\label{prop:keep}
  For every $i \in [m]$, $v\in V(G_i)$, and $c \in L(v)$,
  \begin{equation}
    \label{eqn:keep-probability}
    \Prob{\not\exists u \in N_{G_i}(v) \cap A \text{ such that }\psi(u) = c} = \left(1 - \frac{p}{\Lambda}\right)^D.
  \end{equation}
  Moreover, for every $v\in V(G)$ and $c\in L(v)$,
  \begin{equation}\label{eqn:full-keep-probability}
    \Prob{c \in L_{A, \psi}(v)} = \left(1 - \frac{p}{\Lambda}\right)^{DC}.
  \end{equation}
\end{proposition}
\begin{proof}
  First we prove \eqref{eqn:keep-probability}.
  For each $u \in N_{G_i}(v)$ with $c \in L(u)$, we have $\Prob{u \in A} = p$ and $\Prob{\psi(u) = c} = 1 / \Lambda$, and the events that $u \in A$ and that $\psi(u) = c$ are independent.  Moreover, these events are mutually independent for all such $u$, so
  \begin{equation*}
    \Prob{\not\exists u \in N_{G_i}(v) \cap A \text{ s.t.~}\psi(u) = c} = \prod_{u \in N_{G_i}(v) : c \in L(u)}\left(1 - \Prob{u \in A}\cdot\Prob{\psi(u) = c}\right) = \left(1 - \frac{p}{\Lambda}\right)^D,
  \end{equation*}
  as desired.

  Now we prove \eqref{eqn:full-keep-probability}.
  For every $v\in V(G)$ and $c \in L(v)$, we have $c \in L_{A, \psi}(v)$ if and only if there is no $u \in N_{G_i}(v) \cap A$ with $\psi(u) = c$ for every $i$ for which $v \in V(G_i)$.  By \ref{hypo:intersection-bound}, these events are mutually independent for all such $i$ (of which there are $C$ by \ref{hypo:containment-bound}), so by \eqref{eqn:keep-probability},
  \begin{equation*}
    \Prob{c\in L_{A, \psi}(v)} = \prod_{i \in [m] : v \in V(G_i)}\Prob{\not\exists u \in N_{G_i}(v) \cap A \text{ s.t.~}\psi(u) = c} = \left(1 - \frac{p}{\Lambda}\right)^{DC},
  \end{equation*}
  as desired.
\end{proof}
Now we compute the expectations of our random variables.
\begin{lemma}\label{lemma:expectations}
  Every vertex $v\in V(G)$ satisfies
  \begin{equation}\label{eqn:expected-list-size}
    \Expect{\remainingColoursRV_v} = \left(1 - \frac{p}{\Lambda}\right)^{DC}\Lambda = \newlambda(\Lambda, D, C, p).
  \end{equation}
  Moreover, for every $i \in [m]$, $v \in V(G_i)$, and $c \in L(v)$,
  \begin{align}
    \label{eqn:expected-uncol-active}
    &&\Expect{\uncolActivatedNbrsRV_{v, c, i}} &= p\left(1 - \left(1 - \frac{p}{\Lambda}\right)^{DC}\right)D && \text{and}\\
    \label{eqn:expected-removed-colors}
    &&\Expect{\keptUnactColsRV_{v, c, i}} &= (1 - p)\left(1 - \frac{p}{\Lambda}\right)^{D(C - 1)}D.
  \end{align}
\end{lemma}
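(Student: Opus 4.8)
The plan is to prove all three identities by linearity of expectation, reducing each to a per-vertex (and per-color) probability that can be read off from \eqref{eqn:keep-probability}, \eqref{eqn:full-keep-probability}, or computed directly by exploiting the product structure of $(\sampleSpace, \sigmaAlg, \mathbb P)$. Throughout I use the normalisations coming from Proposition~\ref{prop:embedding}, namely that every $v$ lies in exactly $C$ of the $G_i$ and that $d_{G_i, L}(v, c) = D$ for all relevant $i, v, c$.

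First I would handle \eqref{eqn:expected-list-size}: since $\remainingColoursRV_v = \sum_{c \in L(v)} \mathbf 1[c \in L_{A, \psi}(v)]$, linearity and \eqref{eqn:full-keep-probability} give $\Expect{\remainingColoursRV_v} = |L(v)|\cdot(1 - p/\Lambda)^{DC} = \Lambda(1 - p/\Lambda)^{DC}$, which is $\newlambda(\Lambda, D, C, p)$ by definition. Next, for \eqref{eqn:expected-uncol-active}, I write $\uncolActivatedNbrsRV_{v, c, i}$ as a sum over the (exactly $D$) vertices $u \in N_{G_i}(v)$ with $c \in L(u)$ of the indicator that $u \in A$ and $u \notin X_{A, \psi}$. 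The event $\{u \in A\}$ depends only on the activation coordinate of $u$ and has probability $p$; conditioned on it, $u \notin X_{A, \psi}$ means $\psi(u) \notin L_{A, \psi}(u)$, and since $L_{A, \psi}(u)$ depends only on the activation and colour coordinates of $N_G(u)$ (not of $u$ itself), the law of total probability over the value of $\psi(u)$ together with \eqref{eqn:full-keep-probability} shows $\Prob{\psi(u) \in L_{A, \psi}(u) \mid u \in A} = (1 - p/\Lambda)^{DC}$, independently of the conditioning. Hence each term contributes $p\bigl(1 - (1 - p/\Lambda)^{DC}\bigr)$ and summing over the $D$ choices of $u$ gives the claim.

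For \eqref{eqn:expected-removed-colors} I again sum over the $D$ vertices $u \in N_{G_i}(v)$ with $c \in L(u)$ the probability that $u \notin A$ and $u \notin \bigcup_{u' \in A \setminus V(G_i):\, \psi(u') = c} N_G(u')$, i.e. that $u \notin A$ and there is no $u' \in N_G(u) \setminus V(G_i)$ with $u' \in A$ and $\psi(u') = c$. The key structural observation is that for any $j \neq i$ with $u \in V(G_j)$ we have $N_{G_j}(u) \cap V(G_i) = \emptyset$: indeed $u$ already lies in $V(G_i) \cap V(G_j)$, which by \ref{hypo:intersection-bound} has size at most one, so it equals $\{u\}$, and $u \notin N_{G_j}(u)$. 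Consequently $N_G(u) \setminus V(G_i)$ is the disjoint union of the sets $N_{G_j}(u)$ over the $C - 1$ indices $j \neq i$ with $u \in V(G_j)$, and the number of vertices $u'$ in it with $c \in L(u')$ is exactly $(C-1)D$. The events $\{u' \in A\} \cap \{\psi(u') = c\}$ over these $u'$ involve pairwise disjoint coordinates (and none involving $u$), each has probability $p/\Lambda$, and $\{u \notin A\}$ is independent of all of them, so the probability in question is $(1 - p)(1 - p/\Lambda)^{(C-1)D}$; summing over the $D$ choices of $u$ finishes the proof.

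The only genuinely non-routine point is the structural observation in the last paragraph that $N_{G_j}(u) \cap V(G_i) = \emptyset$, which is what makes the relevant count equal to $(C-1)D$ exactly rather than merely approximately; everything else is linearity of expectation plus tracking which coordinates of the product space each event depends on. I would also remark explicitly that all the stated identities hold under the equalities $|\{i : v \in V(G_i)\}| = C$ and $d_{G_i, L}(v, c) = D$ guaranteed by Proposition~\ref{prop:embedding}, so that the counts $D$ and $(C-1)D$ are exact.
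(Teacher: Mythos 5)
Your proof is correct and follows essentially the same approach as the paper's: linearity of expectation reduces each identity to a per-vertex probability, which is then computed via \eqref{eqn:keep-probability}/\eqref{eqn:full-keep-probability} and the independence structure of the product space. You are slightly more explicit than the paper in spelling out why $N_{G_j}(u) \cap V(G_i) = \emptyset$ for $j \neq i$ (so that the relevant count is exactly $(C-1)D$), but the paper uses the same fact implicitly when applying \eqref{eqn:keep-probability} across the $C - 1$ graphs $G_j$ with $j \neq i$.
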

\begin{proof}
  By the linearity of expectation, we have $\Expect{\remainingColoursRV_v} = \sum_{c\in L(v)}\Prob{c \in L_{A, \psi}(v)}$, so \eqref{eqn:expected-list-size} follows from \eqref{eqn:full-keep-probability}.

  Again by the linearity of expectation, we have
  \begin{equation*}
    \Expect{\uncolActivatedNbrsRV_{v, c, i}} = \sum_{u \in N_{G_i}(v) : c \in L(u)}\Prob{u \in A \text{ and } u \notin X_{A, \psi}}.
  \end{equation*}
  Note that $\Prob{u \in A \text{ and } u \notin X_{A, \psi}} = \Prob{u \in A \text{ and } \psi(u) \notin L_{A, \psi}(u)}$, and the events that $u \in A$ and that $\psi(u) \notin L_{A, \psi}(u)$ are independent.  We have $\Prob{u \in A} = p$ and $\Prob{\psi(u) \notin L_{A, \psi}(u)} = 1 - \left(1 - p / \Lambda\right)^{DC}$ by \eqref{eqn:full-keep-probability}, so \eqref{eqn:expected-uncol-active} follows from the equation above.
  \COMMENT{
    \begin{equation*}
      \Expect{\uncolActivatedNbrsRV_{v, c, i}} = \sum_{u \in N_{G_i}(v) : c \in L(u)}p\left(1 - \left(1 - \frac{p}{\Lambda}\right)^{DC}\right) = p\left(1 - \left(1 - \frac{p}{\Lambda}\right)^{DC}\right)D.
    \end{equation*}}

  By the linearity of expectation, we have
  \begin{equation*}
    \Expect{\keptUnactColsRV_{v, c, i}} = \sum_{u \in N_{G_i}(v) : c \in L(u)}\Prob{u \notin A \text{ and } \psi(w) \neq c~\forall w \in N_{G_j}(u) \cap A,~j\in[m]\setminus\{i\}}.
  \end{equation*}
  By \eqref{eqn:keep-probability}, $\Prob{\psi(w) \neq c~\forall w \in N_{G_j}(u) \cap A,~j\in[m]\setminus\{i\}} = \left(1 - p / \Lambda\right)^{D(C - 1)}$.  Since the events that $u \notin A$ and $\psi(w) \neq c~\forall w \in N_{G_j}(u)\cap A,~j\in[m]\setminus\{i\}$ are independent, \eqref{eqn:expected-removed-colors} follows from the equation above.
  \COMMENT{
    \begin{equation*}
      \Expect{\keptUnactColsRV_{v, c, i}} = \sum_{u \in N_{G_i}(v) : c \in L(u)}(1 - p)\left(1 - \frac{p}{\Lambda}\right)^{D(C - 1)} = (1 - p)\left(1 - \frac{p}{\Lambda}\right)^{D(C - 1)}D.
  \end{equation*}}
\end{proof}

Now we need to show that the random variables in Lemma~\ref{lemma:expectations} are close to their expectation with high probability.  We use Theorem~\ref{thm:bj-tala}, the `exceptional outcomes' version of Talagrand's Inequality.  To that end, we define an exceptional outcome for each vertex and show that it is unlikely.
First, for each vertex $v\in V(G)$ and $c\in L(v)$ and $i \in [m]$, we define the random variable
\begin{itemize}
\item $\conflictsRV_{v, c, i}(A, \psi) \coloneqq \left|\left\{u \in N_{G_i}(v) : \psi(u) = c\right\}\right|$.\COMMENT{We really only care about the vertices in $A$, but it is conceptually simpler to ignore that, as it does not greatly affect the probability of the exceptional outcomes.}
\end{itemize}
Then, for each vertex $v\in V(G)$ and $i \in [m]$, we define
\begin{itemize}
\item $\sampleSpace^*_{v, i} = \{(A, \psi) : \exists u \in \{v\} \cup N_{G_i}(v) \cup N^2_{G_i}(v),~\exists c\in L(u),~\conflictsRV_{u, c, i}(A, \psi) \geq \log D \}$,
\end{itemize}
where $N^2_{G_i}(v)$ is the set of vertices at distance two from $v$ in $G_i$.
To apply Theorem~\ref{thm:bj-tala} to $\uncolActivatedNbrsRV_{v, c, i}$, we will show that $\uncolActivatedNbrsRV_{v, c, i}$ has upward $(s, \delta)$-certificates with respect to exceptional outcomes $\sampleSpace^*_{v, i}$ with $s = 4D$ and $\delta = \log D$.  It turns out that we need to consider these exceptional outcomes because, for example, if there is a set of more than $\log D$ vertices in $N_{G_i}(v) \cap A$ that are all assigned the same color, then changing the outcome of a single trial (in particular the trial determining either $\psi(u)$ or whether $u \in A$ for some $u \in \{v\} \cup N_{G_i}(v) \cup N^2_{G_i}(v)$ for which $\conflictsRV_{u, c, i}(A, \psi) \geq \log D$ for some $c\in L(u)$) can affect the value of $\uncolActivatedNbrsRV_{v, c, i}$ by more than $\log D$ (see the final part of the proof of Lemma~\ref{lemma:concentrations} for more details).  We remark that the event $\sampleSpace^*_{v, i}$ contains more outcomes than is necessary; it is simply more convenient to consider $\sampleSpace^*_{v, i}$, and the argument adapts more easily for correspondence coloring in Section~\ref{section:corr-col}.

Now we bound the probability of these exceptional events.  For this, we assume without loss of generality that $uv \in E(G)$ only if $L(u) \cap L(v) \neq \emptyset$, so each $G_i$ has maximum degree at most $\Lambda D$.

\begin{proposition}\label{prop:exceptional-outcomes}
  Every vertex $v\in V(G)$ and $i \in [m]$ satisfies
  \begin{equation*}
    \Prob{\sampleSpace^*_{v, i}} \leq 11^3 C^3D^5\left(\log D\right)^{- \log D}.
  \end{equation*}
\end{proposition}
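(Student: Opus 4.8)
The plan is to bound $\Prob{\sampleSpace^*_{v,i}}$ by a union bound over the at most $1 + \Lambda D + (\Lambda D)^2 \le 3\Lambda^2 D^2$ vertices $u \in \{v\}\cup N_{G_i}(v)\cup N^2_{G_i}(v)$ and over the at most $\Lambda$ colors $c \in L(u)$, so it suffices to show that for a fixed vertex $u$ and color $c$,
\begin{equation*}
  \Prob{\conflictsRV_{u, c, i} \ge \log D} \le D^{-1}(\log D)^{-\log D},
\end{equation*}
say; combined with the bound $\Lambda \le 10CD$ this yields something of the form $3\cdot(10CD)^2\cdot D^2\cdot 10CD \cdot D^{-1}(\log D)^{-\log D} \le 11^3 C^3 D^5 (\log D)^{-\log D}$ after absorbing constants. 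So the real content is a tail bound on $\conflictsRV_{u,c,i}$, which counts the neighbours $w$ of $u$ in $G_i$ with $\psi(w) = c$.

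First I would observe that $\conflictsRV_{u,c,i}$ is a sum of independent indicator random variables: for each $w \in N_{G_i}(u)$, the event $\psi(w) = c$ has probability $1/\Lambda$ if $c \in L(w)$ and $0$ otherwise, and these events are mutually independent since the $\psi(w)$ are chosen independently. Since $|N_{G_i}(u)| \le \Lambda D$ (using the reduction that $uv\in E(G)$ only if $L(u)\cap L(v)\ne\emptyset$, so $G_i$ has maximum degree at most $\Lambda D$), we get $\Expect{\conflictsRV_{u,c,i}} \le \Lambda D / \Lambda = D$. Now $\log D \ge e \cdot D$ is false, so we cannot simply say $\log D$ exceeds $e$ times the mean; instead I would apply the standard upper-tail Chernoff bound for sums of independent indicators in the regime where the threshold is much larger than the mean. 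Specifically, for a sum $Y$ of independent $\{0,1\}$ variables with $\Expect{Y} \le \mu$ and any $k \ge \mu$, one has $\Prob{Y \ge k} \le (e\mu/k)^k$. Wait — here $\mu$ could be as large as $D$ while $k = \log D$, so $k < \mu$ and this bound is useless.

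Let me reconsider. The point must be that the relevant count is not over all of $N_{G_i}(u)$ but effectively the mean is $\Expect{\conflictsRV_{u,c,i}} = |\{w \in N_{G_i}(u): c\in L(w)\}|/\Lambda \le D/\Lambda \le 1/(1+\eps) < 1$, using hypothesis \ref{hypo:degree-bound} that $\Delta(G_i, L|_{V(G_i)}) \le D$ — that is, at most $D$ neighbours of $u$ in $G_i$ have $c$ on their list, not $\Lambda D$. So $\Expect{\conflictsRV_{u,c,i}} \le D/\Lambda \le 1$. Then with $k = \log D$ and $\mu = 1$, the Chernoff bound gives $\Prob{\conflictsRV_{u,c,i} \ge \log D} \le (e/\log D)^{\log D} \le (\log D)^{-\log D/2} \cdot e^{\log D}= \dots$; more cleanly, $(e/\log D)^{\log D} \le (\log D)^{-(1-o(1))\log D}$, which for $D$ large is at most $D^{-1}(\log D)^{-\log D+O(1)}$ — actually $(e/\log D)^{\log D}$ decays faster than any fixed negative power of $D$, so it is certainly at most $D^{-100}$, say, which comfortably absorbs the polynomial union-bound factors; writing it as $11^3 C^3 D^5 (\log D)^{-\log D}$ is just a convenient bookkeeping form. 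I would carry this out by citing a Chernoff-type inequality (the paper uses the Local Lemma and Talagrand already, so a Chernoff bound is standard) or proving the one-line moment bound $\Prob{\conflictsRV_{u,c,i}\ge k}\le \binom{D}{k}\Lambda^{-k}\le (D/(k\Lambda))^k e^k \le (e/k)^k$ directly from the union bound over $k$-subsets of the at-most-$D$ relevant neighbours.

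The main obstacle is purely bookkeeping: getting the constants and powers of $D$, $C$, $\log D$ to land exactly on $11^3 C^3 D^5 (\log D)^{-\log D}$. The three factors of $11C$ presumably come from bounding each of $|\{v\}\cup N_{G_i}(v)\cup N^2_{G_i}(v)|$ crudely and $|L(u)|=\Lambda\le 10CD \le 11CD$, and one extra factor or two of $D$ from the maximum-degree bound $\Lambda D \le 11CD^2$; the tail probability $(\log D)^{-\log D+O(1)}$ supplies the $(\log D)^{-\log D}$ with room to spare for the remaining powers of $D$. I would therefore first nail down the deterministic count $|\{v\}\cup N_{G_i}(v)\cup N^2_{G_i}(v)| \le 1 + \Lambda D + (\Lambda D)^2 \le 2(\Lambda D)^2 \le 2(11CD^2)^2$, multiply by $\Lambda \le 11CD$ choices of $c$, multiply by the tail bound $(e/\log D)^{\log D}$, and finally verify that for $D$ large the product is at most $11^3 C^3 D^5 (\log D)^{-\log D}$ — the slack between $(e/\log D)^{\log D}$ and $(\log D)^{-\log D}$ being $e^{\log D} = D$, more than enough to swallow the leftover powers of $D$.
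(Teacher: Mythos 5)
Your proposal is correct and takes essentially the same route as the paper: a union bound over pairs $(u,c)$ with $u$ in the distance-two ball around $v$ in $G_i$ and $c\in L(u)$, combined with the observation that $\conflictsRV_{u,c,i}$ is a sum of at most $D$ independent Bernoulli$(1/\Lambda)$ indicators (thanks to the color-degree hypothesis), whose tail at $\log D$ you bound by the standard $\binom{D}{k}\Lambda^{-k}\le (e/k)^k$ computation — exactly the paper's calculation. The only cosmetic difference is that the paper sums the binomial tail explicitly rather than invoking a named Chernoff bound, and both write-ups rely on the same observation that the super-polynomial decay of $(\log D)^{-\log D}$ swallows all the polynomial bookkeeping factors.
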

\begin{proof}
  First we bound the probability that $\conflictsRV_{u, c, i}$ is too large for each $u \in V(G)$ and $c\in L(u)$, as follows:
  \begin{equation*}
    \Prob{\conflictsRV_{u, c, i} \geq
      \log D} \leq \sum_{i = \lceil \log D\rceil}^{D}\binom{D}{i}\left(\frac{1}{\Lambda}\right)^i \leq
    \sum_{i=\lceil \log D\rceil}^{D}\left(\frac{eD}{i\Lambda }\right)^i \leq
    \sum_{i=\lceil \log D\rceil}^{D}\left(\frac{e}{i}\right)^i.
  \end{equation*}
  Since each term in the sum is at most $(e/\log D)^{\log D}$ and there are at most $D$ terms, it follows that
  \begin{equation*}
    \Prob{\conflictsRV_{u, c, i} \geq \log D} \leq D^2\left(\log D\right)^{- \log D}.
  \end{equation*}
  Thus, combining the above inequality with the Union Bound, we have $\Prob{\sampleSpace^*_{v,i}} \leq \Lambda(1 + \Lambda D + \Lambda^2D^2)\left(\log D\right)^{- \log D} \leq 11^3 C^3D^5\left(\log D\right)^{- \log D}$,
  as desired.
\end{proof}

Combining Theorem~\ref{thm:bj-tala} and Proposition~\ref{prop:exceptional-outcomes}, we show that $\remainingColoursRV_v$, $\uncolActivatedNbrsRV_{v, c, i}$, and $\keptUnactColsRV_{v, c, i}$ are close to their expectation with high probability in the following lemma.

\begin{lemma}\label{lemma:concentrations}
  Every vertex $v\in V(G)$ satisfies
  \begin{equation}
    \label{eqn:list-size-concentration}
    \Prob{\left|\remainingColoursRV_v - \Expect{\remainingColoursRV_v}\right| > D^{2/3}} \leq \exp\left(-D^{1/4}\right).
  \end{equation}
  Moreover, if $v \in V(G_i)$ and $c \in L(v)$, then
  \begin{align}
    \label{eqn:uncol-active-concentration}
    &&\Prob{\left|\uncolActivatedNbrsRV_{v, c, i} - \Expect{\uncolActivatedNbrsRV_{v, c, i}}\right| > D^{2/3}} &\leq \exp\left(-D^{1/4}\right)&&\text{and}\\
    \label{eqn:removed-colors-concentration}
    &&\Prob{\left|\keptUnactColsRV_{v, c, i} - \Expect{\keptUnactColsRV_{v, c, i}}\right| > D^{2/3}} &\leq \exp\left(-D^{1/4}\right).
  \end{align}
\end{lemma}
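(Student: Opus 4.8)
The plan is to apply Theorem~\ref{thm:bj-tala} (the exceptional-outcomes Talagrand inequality of Bruhn and Joos) separately to each of the three random variables $\remainingColoursRV_v$, $\uncolActivatedNbrsRV_{v,c,i}$, and $\keptUnactColsRV_{v,c,i}$, with exceptional set $\sampleSpace^* = \sampleSpace^*_{v,i}$ (for $\remainingColoursRV_v$ one can take $\sampleSpace^* = \emptyset$, or any of the $\sampleSpace^*_{v,i}$ with $v \in V(G_i)$, since a bounded-difference / Lipschitz argument already suffices there). The key inputs to verify for each variable are: (a) a bound on the supremum $M$ — each of these variables is at most the relevant degree, so $M \le \Lambda D$, hence $\Prob{\sampleSpace^*_{v,i}} \le M^{-2}$ follows from Proposition~\ref{prop:exceptional-outcomes} for $D$ large, since $(\log D)^{-\log D}$ decays faster than any polynomial in $D$; (b) the existence of upward $(s,\change)$-certificates with $s = O(D)$ and $\change = \log D$; and (c) the arithmetic check that with $t = D^{2/3}$ we have $t > 50\change\sqrt{s} = 50(\log D)\sqrt{O(D)}$ for large $D$, and that $4\exp(-t^2/(16\change^2 s)) + 4\Prob{\sampleSpace^*} \le \exp(-D^{1/4})$, again using $t^2/(16\change^2 s) = \Theta(D^{4/3}/(D \log^2 D)) = \Theta(D^{1/3}/\log^2 D) \gg D^{1/4}$ and the superpolynomial decay from Proposition~\ref{prop:exceptional-outcomes}.

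The heart of the proof is establishing the certificates (step (b)). For $\remainingColoursRV_v = |L_{A,\psi}(v)|$, changing the activation status or color of a single vertex $u \in N_G(v)$ changes $\remainingColoursRV_v$ by at most $1$ (it can cause at most one color to become unavailable or available), so $\remainingColoursRV_v$ has upward $(s,\change)$-certificates with $s = |N_G(v)| \le \Lambda D$ — in fact $s \le CD$ suffices since only colors in $L(v)$ matter and each contributes degree $\le D$ across the $C$ graphs — and $\change = 1$; here one need not even use the exceptional set. For $\uncolActivatedNbrsRV_{v,c,i} = |\{u \in N_{G_i}(v)\cap A : c \in L(u)\} \setminus X_{A,\psi}|$: on an outcome outside $\sampleSpace^*_{v,i}$, membership of $u$ in this set is certified by the trials for $u$ itself (is $u \in A$? is $\psi(u) = c$, i.e.\ does $u$ leave $X$ via color $c$?) together with the trials for the $\le \log D$ neighbours of $u$ in each $G_j$ that could witness $\psi(u) \notin L_{A,\psi}(u)$ — and crucially, outside $\sampleSpace^*_{v,i}$ each relevant vertex $w$ is assigned any fixed color by at most $\log D$ of its $G_i$-neighbours, so flipping one trial can change the status of at most $\log D$ vertices $u$. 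This gives upward $(s,\change)$-certificates with $s = O(D)$ (the index set being the trials attached to $v$'s $G_i$-neighbourhood and its second neighbourhood, whose size is $O(D)$ after restricting to colors in the relevant lists) and $\change = \log D$. The argument for $\keptUnactColsRV_{v,c,i}$ is analogous: $u \in N_{G_i}(v)\setminus A$ contributes iff $u \notin A$ and no vertex of $A \setminus V(G_i)$ adjacent to $u$ receives color $c$; this is certified by a bounded set of trials, and outside $\sampleSpace^*_{v,i}$ a single trial flip affects the count by at most $O(\log D)$ for the same "few repeated colors" reason.

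The main obstacle — and the reason the exceptional set $\sampleSpace^*_{v,i}$ is needed at all — is exactly the Lipschitz/certificate bound $\change$: without restricting to outcomes where no color is repeated more than $\log D$ times among any relevant vertex's $G_i$-neighbourhood, a single trial could flip the color of a vertex $u$ whose "color-$c$ conflict count" is large, thereby simultaneously adding (or removing) all of $u$'s color-$c$ activated neighbours to the count of $\uncolActivatedNbrsRV$, an uncontrolled change. Quarantining these bad outcomes into $\sampleSpace^*_{v,i}$ — which by Proposition~\ref{prop:exceptional-outcomes} has probability far below $M^{-2}$ — lets us take $\change = \log D$ on the complement. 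Once the certificates are in place, the remainder is the routine plug-in of $s = O(D)$, $\change = \log D$, $t = D^{2/3}$ into Theorem~\ref{thm:bj-tala}, absorbing the additive $4\Prob{\sampleSpace^*_{v,i}}$ term using Proposition~\ref{prop:exceptional-outcomes}, and noting $4\exp(-\Theta(D^{1/3}/\log^2 D)) + 4\cdot 11^3C^3 D^5 (\log D)^{-\log D} \le \exp(-D^{1/4})$ for all sufficiently large $D$.
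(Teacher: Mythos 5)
Your overall strategy — apply Theorem~\ref{thm:bj-tala} to each of the three random variables, invoking $\sampleSpace^*_{v,i}$ and $\change = \log D$ precisely to control the multiplicity of shared certificate vertices for $\uncolActivatedNbrsRV_{v,c,i}$ — matches the paper. But two of the three certificate constructions are off, and one of the errors is a genuine gap in the argument.

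For $\remainingColoursRV_v$, you propose certificates indexed by all of $N_G(v)$, claiming $s = |N_G(v)| \le \Lambda D$ and then asserting ``in fact $s \le CD$ suffices.'' Neither works as stated. The size of $N_G(v)$ can be as large as $\Theta(\Lambda D) = \Theta(D^2)$, which fails the hypothesis $t > 50\change\sqrt{s}$ of Theorem~\ref{thm:bj-tala} when $t = D^{2/3}$; and the reasoning you give for $s \le CD$ (``each [color] contributes degree $\le D$ across the $C$ graphs'') is not a bound on $|N_G(v)|$, which sums over all $\Lambda$ colors. The paper's certificate for $\Lambda - \remainingColoursRV_v$ indexes exactly the trials of \emph{one chosen witness per missing color} — a vertex $u_c \in N_G(v) \cap A$ with $\psi(u_c) = c$ for each $c \in L(v)\setminus L_{A,\psi}(v)$ — yielding $s \le 2\Lambda = O(D)$ with $\change = 1$. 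A certificate is a small index set of decisive trials, not the full set of trials that could in principle influence the variable; conflating the two is exactly what breaks your first bound.

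For $\keptUnactColsRV_{v,c,i}$, you invoke $\sampleSpace^*_{v,i}$ and $\change = O(\log D)$ ``for the same `few repeated colors' reason.'' This justification does not apply here, because $\sampleSpace^*_{v,i}$ is defined via $\conflictsRV_{u,c,i}$, which counts conflicts among $G_i$-neighbours only — but the witness $w_u$ for a vertex $u \in N_{G_i}(v)$ in $D - \keptUnactColsRV_{v,c,i}$ is required to lie in $A\setminus V(G_i)$, i.e.\ it is a neighbour of $u$ in some $G_j$ with $j \ne i$. The exceptional set gives you no handle on the multiplicity of such $w_u$. The correct (and cleaner) observation, which the paper uses, is that near-disjointness alone bounds this multiplicity: if $w_u = w_{u'} = w$ with both $u,u' \in V(G_i)$, then for each $j \ne i$ with $w \in V(G_j)$ there is at most one vertex of $V(G_i)\cap V(G_j)$ adjacent to $w$ in $G_j$, and $w$ lies in at most $C$ of the $G_j$. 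Hence the multiplicity is $O(1)$, one can take $\sampleSpace^* = \emptyset$ and $\change = O(1)$, and no exceptional-outcome machinery is needed for this variable. Your version happens to quote a weaker $\change = O(\log D)$ that would numerically suffice, but the stated mechanism for obtaining it is wrong, and without the near-disjointness observation you have no valid bound at all.

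Your treatment of $\uncolActivatedNbrsRV_{v,c,i}$ — the one variable where the exceptional set genuinely is needed (to control the multiplicity of witnesses $w_u$ with $w_u \in V(G_i)$, since near-disjointness only handles the $w_u \notin V(G_i)$ case) — is essentially right, as is the final arithmetic.
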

\begin{proof}
  First we prove \eqref{eqn:list-size-concentration}.  We apply Theorem~\ref{thm:bj-tala} with exceptional outcomes $\sampleSpace^* = \emptyset$.  To that end, we show that $\Lambda - \remainingColoursRV_v$ has upward $(s, \delta)$-certificates with respect to $\sampleSpace^*$, where $s = 2\Lambda$ and $\delta = 1$.  Let $(A, \psi) \in \sampleSpace$.  For every $c \in L(v) \setminus L_{A, \psi}(v)$, there is at least one neighbor $u \in N_{G}(v)$ of $v$ such that $u \in A$ and $\psi(u) = c$.  Choose one such neighbor, and denote it by $u_c$.  Let $I_{A, \psi}$ index the trials determining whether $u_c \in A$ and $\psi(u_c) = c$ for each $c \in L(v)\setminus L_{A, \psi}(v)$.  Now if $(A', \psi') \in \sampleSpace$ differs from $(A, \psi)$ in at most $t$ of the trials indexed by $I_{A, \psi}$ (and differs arbitrarily for trials not indexed by $I_{A, \psi}$), then all but at most $t$ colors in $L(v) \setminus L_{A, \psi}(v)$ are also in $L(v) \setminus L_{A', \psi'}(v)$.  Hence, $\Lambda - \remainingColoursRV_v(A', \psi') \geq \Lambda - \remainingColoursRV_v(A, \psi) - t$, and since $I_{A, \psi} \leq 2\Lambda = s$, it follows that $\Lambda - \remainingColoursRV_v$ has upward $(s, \delta)$-certificates, as desired.  Therefore by Theorem~\ref{thm:bj-tala} with $t = D^{2/3}$\COMMENT{since $\Lambda \leq 10CD$, we have $t > 50\delta\sqrt s$, as required.},
  \begin{equation*}
    \Prob{\left|\remainingColoursRV_v - \Expect{\remainingColoursRV_v}\right| > D^{2/3}} \leq 4\exp\left(-\frac{D^{4/3}}{32\Lambda}\right) \leq\exp\left(-D^{1/4}\right),
  \end{equation*}
  as desired.

  Now we prove \eqref{eqn:removed-colors-concentration}.  We apply Theorem~\ref{thm:bj-tala} with exceptional outcomes $\sampleSpace^* = \emptyset$.  To that end, we show that $D - \keptUnactColsRV_{v, c, i}$ has upward $(s, \delta)$-certificates with respect to $\sampleSpace^*$, where $s = 3D$ and $\delta = 1$.  Let $(A, \psi) \in \sampleSpace$, and note that
  \begin{equation*}
    D - \keptUnactColsRV_{v, c, i}(A, \psi) = |\{x \in N_{G_i}(v) : c \in L(x)\} \cap (A \cup \bigcup_{w \in A\setminus V(G_i) : \psi(w) = c}N_G(w))|.
  \end{equation*}
  For every $u \in \{x \in N_{G_i}(v) : c \in L(x)\} \cap \bigcup_{w \in A\setminus V(G_i) : \psi(w) = c}N_G(w)$, there is at least one neighbor $w \in N_{G_j}(u)$ of $u$ where $j \in [m]\setminus\{i\}$ such that $w \in A$ and $\psi(w) = c$.  Choose one such neighbor, and denote it by $w_u$.  By \ref{hypo:intersection-bound}, the vertices $w_u$ are distinct for different choices of $u$.  Let $I_{A, \psi}$ index the trials determining whether $u \in A$ for each $u \in \{x \in N_{G_i}(v) : c \in L(x)\}$ and whether $w_u \in A$ and $\psi(w_u) = c$ for each $u \in \{x \in N_{G_i}(v) : c \in L(x)\} \cap \bigcup_{w \in A\setminus V(G_i) : \psi(w) = c}N_G(w)$.  Now if $(A', \psi') \in \sampleSpace$ differs from $(A, \psi)$ in at most $t$ of the trials indexed by $I_{A, \psi}$ (and differs arbitrarily for trials not indexed by $I_{A, \psi}$)\COMMENT{since the $w_u$ vertices are distinct}, all but at most $t$ vertices in $\{x \in N_{G_i}(v) : c \in L(x)\} \cap (A \cup \bigcup_{w \in A\setminus V(G_i) : \psi(w) = c}N_G(w))$ are in $\{x \in N_{G_i}(v) : c \in L(x)\} \cap (A' \cup \bigcup_{w \in A'\setminus V(G_i) : \psi'(w) = c}N_G(w))$.  Hence, $D - \keptUnactColsRV_{v, c, i}(A', \psi') \geq D - \keptUnactColsRV_{v, c, i}(A, \psi) - t$, and since $I_{A, \psi} \leq 3D = s$, it follows that $\keptUnactColsRV_{v, c, i}$ has upward $(s, \delta)$-certificates, as desired.  Therefore by Theorem~\ref{thm:bj-tala} with $t = D^{2/3}$,
  \begin{equation*}
    \Prob{\left|\keptUnactColsRV_{v, c, i} - \Expect{\keptUnactColsRV_{v, c, i}}\right| > D^{2/3}} \leq 4\exp\left(-\frac{D^{4/3}}{48D}\right) \leq\exp\left(-D^{1/4}\right),
  \end{equation*}
  as desired.
  
  Finally, we prove \eqref{eqn:uncol-active-concentration}. 
We apply Theorem~\ref{thm:bj-tala} with exceptional outcomes $\sampleSpace^* = \sampleSpace^*_{v, i}$.
To that end, we show that $\uncolActivatedNbrsRV_{v, c, i}$ has upward $(s, \delta)$-certificates with respect to $\sampleSpace^*$, where $s = 4D$ and $\delta = \log D$.  Let $(A, \psi) \in \sampleSpace\setminus\sampleSpace^*$.  For every $u \in \{x \in N_{G_i}(v) \cap A : c \in L(x)\} \setminus X_{A, \psi}$, there is at least one neighbor $w \in N_{G}(u)$ of $u$ such that $w \in A$ and $\psi(w) = \psi(u)$.  Choose one such neighbor, and denote it by $w_u$.  Let $I_{A, \psi}$ index the trials determining whether $u \in A$, whether $w_u \in A$, and the assignment of $\psi(u)$ and $\psi(w_u)$ for each $u \in \{x \in N_{G_i}(v) \cap A : c \in L(x)\}$.  Since $(A, \psi)\notin\sampleSpace^*_v$,
the multi-set $\{w_u : u \in (N_{G_i}(v) \cap A) \setminus X_{A, \psi},\ c \in L(u)\}$ has maximum multiplicity at most $\log D$ (as otherwise, if $w$ appears more than $\log D$ times in this multi-set, then $\conflictsRV_{w, \psi(w), i}(A, \psi) > \log D$).
Hence, if $(A', \psi') \in \sampleSpace$ differs from $(A, \psi)$ in at most $t / \log D$ of the trials indexed by $I_{A, \psi}$ (and differs arbitrarily for trials not indexed by $I_{A, \psi}$), then all but at most $t$ vertices in $\{x \in N_{G_i}(v) \cap A : c \in L(x)\} \setminus X_{A, \psi}$ are in $\{x \in N_{G_i}(v) \cap A' : c \in L(x)\} \setminus X_{A', \psi'}$.  Hence, $\uncolActivatedNbrsRV_{v, c, i}(A', \psi') \geq \uncolActivatedNbrsRV_{v, c, i}(A, \psi) - t$, and since $I_{A, \psi} \leq 4D = s$, it follows that $\uncolActivatedNbrsRV_{v, c, i}$ has upward $(s, \delta)$-certificates, as desired.  Therefore by Theorem~\ref{thm:bj-tala} with $t = D^{2/3}$ and Proposition~\ref{prop:exceptional-outcomes},
  \begin{equation*}
    \Prob{\left|\uncolActivatedNbrsRV_{v, c, i} - \Expect{\uncolActivatedNbrsRV_{v, c, i}}\right| > D^{2/3}} \leq 4\exp\left(-\frac{D^{4/3}}{64D\log^2 D}\right) + 44^3C^3D^5\left(\log D\right)^{-\log D}\leq\exp\left(-D^{1/4}\right),
  \end{equation*}
  as desired.
\end{proof}

Finally we can prove Lemma~\ref{lemma:nibble}, using Lemmas~\ref{lemma:expectations} and~\ref{lemma:concentrations}.
\begin{proof}[Proof of Lemma~\ref{lemma:nibble}]
  First, rather than showing \ref{nibble:list-size}, it suffices to show that $|L'(v)| \geq \left(1 - p/\Lambda\right)^{DC}\Lambda - \Lambda^{4/5}$ for every $v \in V(G)\setminus X$, since we can truncate each list until equality holds.
  
  We consider $(A, \psi)$ chosen randomly as described earlier, and we define the following set of bad events for each vertex $v\in V(G)$ and $c\in L(v)$ and $i \in [m]$:
  \begin{align*}
    \mathcal A_v &= 
                     \left\{
                   (A, \psi) \,:\, \remainingColoursRV_v(A, \psi) < \Expect{\remainingColoursRV_v} - \Lambda^{4/5}\right\},\\
        \mathcal A_{v, c, i} &=
                        \left\{
                                (A, \psi) \,:\, \uncolActivatedNbrsRV_{v, c, i} > \Expect{\uncolActivatedNbrsRV_{v, c, i}} + D^{4/5}/2 \right\}, \text{ and}\\
    \mathcal A'_{v, c, i} &= \left\{(A, \psi) \,:\, \keptUnactColsRV_{v,c,i}(A, \psi) > \Expect{\keptUnactColsRV_{v, c, i}} + D^{4/5}/2\right\}.
  \end{align*}
  Letting $\mathcal A$ be the union of all such bad events,
  note that each event in $\mathcal A$ is mutually independent of all but at most $(\lceil\Lambda\rceil CD)^4 \leq (11C^2D^2)^4$ other events in $\mathcal A$.  
  By Lemma~\ref{lemma:concentrations}, every event in $\mathcal A$ occurs with probability at most $\exp\left(-D^{1/4}\right)$.
  Therefore by the Lov\' asz Local Lemma, there exists $(A, \psi)\notin \mathcal A$.

  Now we show that $X \coloneqq X_{A, \psi}$ and $L' \coloneqq L_{A, \psi}|_{V(G)\setminus X}$ satisfy the lemma with $\phi \coloneqq \psi|_X$.  Indeed, $\phi$ is an $L|_X$-coloring of $G[X]$ and since $X \subseteq A$, we have $L'(v) \subseteq L(v)\setminus \{\phi(u) : u \in N_G(v) \cap X\}$, as required.  Moreover, for every $v\in V(G)$, since $(A, \psi) \notin \mathcal A_v$, by \eqref{eqn:expected-list-size}, we have $|L'(v)| \geq \left(1 - p/\Lambda\right)^{DC}\Lambda - \Lambda^{4/5}$, as desired, and if $v \in V(G_i)$ and $c\in L(v)$, then since $(A, \psi) \notin \mathcal A_{v, c, i} \cup \mathcal A'_{v, c, i}$, by \eqref{eqn:degree-overcount}, \eqref{eqn:expected-uncol-active}, and \eqref{eqn:expected-removed-colors}, we have \ref{nibble:color-degree}, as desired.
\end{proof}

\section{Correspondence coloring}\label{section:corr-col}

In this section we introduce \textit{correspondence coloring} and describe how to generalize Theorem~\ref{thm:main} to this setting.

\begin{definition}\label{correspondence coloring}
  Let $G$ be a graph with list assignment $L$.

  \begin{itemize}
  \item If $M$ is a map with domain $E(G)$ where for each $e=uv\in E(G)$, $M(e)$ is a matching of $\{u\}\times L(u)$ and $\{v\}\times L(v)$, we say $(L, M)$ is a \textit{correspondence assignment} for $G$.	
  \item An \textit{$(L, M)$-coloring} of $G$ is a map $\phi$ with domain $V(G)$ such that $\phi(u)\in L(u)$ for every $u\in V(G)$, and every $e=uv\in E(G)$ satisfies $(u, \phi(u))(v, \phi(v))\notin M(e)$.  If $G$ has an $(L, M)$-coloring, then we say $G$ is \textit{$(L, M)$-colorable}.
  \item 
    The \textit{correspondence chromatic number} of $G$, also called the \textit{$DP$-chromatic number}, denoted $\chi_{DP}(G)$, is the minimum $k$ such that $G$ is $(L, M)$-colorable for every correspondence assignment $(L, M)$ satisfying $|L(v)| \geq k$ for all $v\in V(G)$.
  \end{itemize}
\end{definition}
For convenience, if $uv\in E(G)$, $c_1\in L(u)$, $c_2\in L(v)$, and $(u, c_1)(v, c_2)\in M(uv)$, we will just write $c_1c_2\in M(uv)$.  For each $v \in V(G)$ and $c \in L(v)$, we will let $N_{G, (L, M)}(v, c) \coloneqq \{(u, c') : uv \in E(G),~cc' \in M(uv)\}$, and we omit the subscript in $N_{G, (L, M)}(v, c)$ when it is clear from the context.
Note that if for each $e=uv\in E(G)$ and $c\in L(u)\cap L(v)$, we have $cc\in M(uv)$, then an $(L, M)$-coloring is an $L$-coloring.  Hence, every graph $G$ satisfies $\chi_\ell(G)\leq\chi_{DP}(G)$.

For a graph $G$ with correspondence assignment $(L, M)$, we define the \textit{color degree} of each $v\in V(G)$ and $c \in L(v)$ to be $d_{G, (L, M)}(v, c) \coloneqq |N_{G, (L, M)}(v, c)|$ and the \textit{maximum color degree} to be $\Delta(G, (L, M)) \coloneqq \max_{v\in V(G)}\max_{c \in L(v)}d_{G, (L, M)}(v, c)$.  With only minor modifications to the argument used to prove Theorem~\ref{thm:main}, which we describe below, we can strengthen Theorem~\ref{thm:main} to the setting of correspondence coloring, as follows.
\begin{theorem}\label{thm:main-corr}
    For every $C, \eps > 0$, the following holds for all sufficiently large $D$.  Let $G_1, \dots, G_m$ be graphs that
  \begin{enumerate*}[(i)]
  \item\label{hypo:intersection-bound-corr} are nearly disjoint and 
  \item\label{hypo:containment-bound-corr} satisfy $|\{i \in [m] : v \in V(G_i)\}| \leq C$ for every $v \in \bigcup_{i=1}^m V(G_i)$.
  \end{enumerate*}
  If $(L, M)$ is a correspondence assignment for $G \coloneqq \bigcup_{i=1}^m G_i$ satisfying
  \begin{enumerate*}[(i)]\stepcounter{enumi}\stepcounter{enumi}
  \item\label{hypo:degree-bound-corr} $\Delta(G_i, (L|_{V(G_i)}, M|_{E(G_i)}) \leq D$ for every $i \in [m]$ and
  \item\label{hypo:list-bound-corr} $|L(v)| \geq (1 + \eps)D$ for every $v\in V(G)$,
  \end{enumerate*}
  then $G$ is $(L, M)$-colorable.
\end{theorem}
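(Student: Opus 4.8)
The plan is to reproduce the proof of Theorem~\ref{thm:main} almost verbatim, the one new ingredient being the observation that makes all the estimates survive: since each $M(uv)$ is a \emph{matching} between $\{u\}\times L(u)$ and $\{v\}\times L(v)$, for every $v$, every $c\in L(v)$, and every neighbour $u\in N_G(v)$ there is \emph{at most one} colour $c'\in L(u)$ with $cc'\in M(uv)$. Consequently, for fixed $(v,c)$ and $i$, the set of pairs $(u,c')$ that can block colour $c$ at $v$ through $G_i$ has size $d_{G_i,(L,M)}(v,c)\le D$, playing exactly the role that $|\{u\in N_{G_i}(v):c\in L(u)\}|\le D$ played in the list-colouring argument. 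As before, an embedding step lets us assume $d_{G_i,(L,M)}(v,c)=D$ for all relevant $i,v,c$.

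First I would prove the correspondence analogue of Proposition~\ref{prop:embedding}: given $(L,M)$, adjoin dummy nearly disjoint graphs together with, on each new edge, an arbitrary matching of the prescribed size between disjoint blocks of colours, so that each vertex lies in exactly $C$ of the graphs and each $(v,c)$ has colour degree exactly $D$ in every $G_i$ containing $v$. This is a routine modification of the list-colouring construction; the only addition is exhibiting the extra matchings. Then one runs the same random experiment --- each vertex joins $A$ independently with probability $p$ and gets a uniform $\psi(v)\in L(v)$ --- and sets $L_{A,\psi}(v)\coloneqq L(v)\setminus\{c\in L(v):\exists u\in N_G(v)\cap A,\ c\psi(u)\in M(uv)\}$ and $X_{A,\psi}\coloneqq\{v\in A:\psi(v)\in L_{A,\psi}(v)\}$. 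The random variables $\remainingColoursRV_v$, $\colDegRV_{v,c,i}$, $\uncolActivatedNbrsRV_{v,c,i}$, $\keptUnactColsRV_{v,c,i}$ and $\conflictsRV_{v,c,i}$ receive the obvious restatements, with ``$\psi(u)=c$'' replaced by ``$c\psi(u)\in M(uv)$'' and ``$c\in L_{A,\psi}(u)$'' replaced by ``$\exists c'$ with $cc'\in M(uv)$ and $c'\in L_{A,\psi}(u)$''. Because $M(uv)$ is a matching, and because for fixed $v$ the neighbourhoods $N_{G_i}(v)$ (over indices $i$ with $v\in V(G_i)$) are pairwise disjoint by \ref{hypo:intersection-bound-corr}, the keep-probability identities \eqref{eqn:keep-probability}--\eqref{eqn:full-keep-probability}, the expectation formulas of Lemma~\ref{lemma:expectations}, and the overcount inequality \eqref{eqn:degree-overcount} all hold with literally unchanged right-hand sides.

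The concentration step is also unchanged: one defines the exceptional events $\sampleSpace^*_{v,i}$ via $\conflictsRV_{u,c,i}(A,\psi)\coloneqq|\{u'\in N_{G_i}(u):c\psi(u')\in M(uu')\}|$, and Proposition~\ref{prop:exceptional-outcomes} goes through since there is again at most one conflicting colour per neighbour, so the binomial tail bound on $\Prob{\conflictsRV_{u,c,i}\ge\log D}$ is identical. In each upward $(s,\change)$-certificate argument of Lemma~\ref{lemma:concentrations} one again picks, for a lost or blocked vertex $u$, a single witness neighbour $w_u\in A$ whose colour conflicts (through $M$) with the relevant colour at $u$; \ref{hypo:intersection-bound-corr} and \ref{hypo:containment-bound-corr} control the re-use of any one trial exactly as in the list case, so $\remainingColoursRV_v$, $\uncolActivatedNbrsRV_{v,c,i}$ and $\keptUnactColsRV_{v,c,i}$ obey the same tail bounds and the Lovász Local Lemma delivers the correspondence analogue of Lemma~\ref{lemma:nibble}. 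Finally, the iteration in Section~\ref{section:main-proof} transfers verbatim: Proposition~\ref{prop:calculations} is a statement about $\newlambda$ and $\newdeg$ only; at each step one restricts $M$ to the surviving edges and colours so that the partial $(L,M)$-colourings combine into one; and one finishes with the correspondence-coloring version of Reed's Lemma~\ref{lemma:finishing-blow}, whose Local Lemma proof does not care whether a conflict is given by equal colours or by a matching edge.

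I expect the only actual work to lie in (a) stating and proving the embedding proposition for correspondence assignments, where one must now construct the added matchings rather than just added vertices, and (b) writing the certificate arguments with enough care that the ``at most one conflicting colour per neighbour'' property is seen to do the job previously done by the symmetry of the relation ``$\psi(u)=\psi(v)$''; neither is a genuine obstacle. Everything downstream then follows exactly as in the list-colouring case, and in particular Theorem~\ref{thm:main-corr} recovers Molloy's bound on the correspondence chromatic index of $k$-bounded linear hypergraphs.
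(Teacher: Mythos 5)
Your proposal is correct and takes essentially the same route as the paper: redefine $L_{A,\psi}$, the random variables, and $\conflictsRV_{v,c,i}$ in terms of the matchings, observe that each $M(uv)$ contributes at most one conflicting color per neighbor so all expectation, keep-probability, overcount, and certificate bounds carry over unchanged, adapt the embedding proposition (constructing the extra matchings), and then run the iteration with a correspondence version of Reed's lemma. This is exactly the set of ``minor modifications'' the paper outlines in Section~\ref{section:corr-col}.
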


As mentioned, Theorem~\ref{thm:main-corr} implies that Corollary~\ref{cor:main} actually holds for the correspondence chromatic number, which in turn implies the main result of~\cite{M18}, that linear and uniform hypergraphs of maximum degree at most $D$ have \textit{correspondence chromatic index} at most $D + o(D)$.

To prove Theorem~\ref{thm:main-corr}, we use the argument presented in Section~\ref{section:main-proof}, but with Lemmas~\ref{lemma:nibble} and \ref{lemma:finishing-blow} replaced with the following lemmas, respectively.
\begin{lemma}
  \label{lemma:nibble-corr}
  For every $C, \eps > 0$, there exists $D_{\ref{lemma:nibble-corr}}$ such that the following holds for every $D \geq D_{\ref{lemma:nibble-corr}}$.
  Let $G_1, \dots, G_m$ be graphs, and let $(L, M)$ be a correspondence assignment for $G \coloneqq \bigcup_{i=1}^m G_i$ satisfying \ref{hypo:intersection-bound-corr}--\ref{hypo:list-bound-corr} (of Theorem~\ref{thm:main-corr}).
  If $|L(v)| = \lceil\Lambda\rceil$ for every $v\in V(G)$, where $(1 + \eps)D \le \Lambda \le 10CD$,  and if $\log^{-1} D \geq p \geq \log^{-2}D$,
  then there exist $X\subseteq V(G)$, an $(L|_X, M|_{E(G[X])})$-coloring $\phi$ of $G[X]$, and a correspondence assignment $L'$ for $G - X$ satisfying $L'(v) \subseteq \{c \in L(v): (v, c) \in (\{v\}\times L(v)) \setminus \bigcup_{u \in X}N(u, \phi(u))\}$ for every $v\in V(G)$,
  such that 
  \begin{enumerate}[(\ref*{lemma:nibble-corr}.1)]
  \item\label{nibble-corr:list-size} $|L'(v)| = \left\lceil \newlambda(\Lambda, D, C, p) - \Lambda^{4/5}\right\rceil$ for every $v\in V(G)\setminus X$ and
  \item\label{nibble-corr:color-degree} $\Delta(G_i - X, (L'|_{V(G_i - X)}, M|_{E(G_i - X)})) \leq \newdeg(\Lambda, D, C, p) + D^{4/5}$ for every $i \in [m]$.
  \end{enumerate}  
\end{lemma}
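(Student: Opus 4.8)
The plan is to follow the proof of Lemma~\ref{lemma:nibble} almost verbatim, systematically replacing the equality ``$\psi(u)=c$'' by the relation that $\psi(u)$ \emph{corresponds} to $c$ across the relevant edge, i.e.\ $(v,c)\in N_{G,(L,M)}(u,\psi(u))$, equivalently $c\,\psi(u)\in M(uv)$. First I would prove the correspondence-coloring analogue of Proposition~\ref{prop:embedding}: by the same padding construction --- now adding dummy vertices together with dummy matchings of the appropriate sizes --- one embeds each $G_i$ into a graph $G_i'$ and extends $(L,M)$ to a correspondence assignment $(L',M')$ for $G'=\bigcup_{i=1}^{m'}G_i'$ so that every vertex of $G'$ lies in exactly $C$ of the $G_i'$, every list has size $\Lambda$, and $d_{G_i',(L',M')}(v,c)=D$ for all $i\in[m']$, all $v\in V(G_i')$ and all $c\in L'(v)$. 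This reduces Lemma~\ref{lemma:nibble-corr} to the case of equality in \ref{hypo:containment-bound-corr} and \ref{hypo:degree-bound-corr} together with exact color degree $D$, exactly as at the start of Section~\ref{section:nibble}. Then I would work with the same product probability space: each $v\in V(G)$ is placed in $A$ independently with probability $p$, and $\psi(v)\in L(v)$ is chosen uniformly at random; set
\[
  L_{A,\psi}(v):=\bigl\{c\in L(v):(v,c)\notin\textstyle\bigcup_{u\in N_G(v)\cap A}N(u,\psi(u))\bigr\}
  \qquad\text{and}\qquad
  X_{A,\psi}:=\{v\in A:\psi(v)\in L_{A,\psi}(v)\}.
\]
Since each $M(uv)$ is a matching, $X_{A,\psi}\subseteq A$, the map $\psi|_{X_{A,\psi}}$ is an $(L|_{X_{A,\psi}},M|_{E(G[X_{A,\psi}])})$-coloring of $G[X_{A,\psi}]$, and $L_{A,\psi}(v)$ avoids $\bigcup_{u\in X_{A,\psi}}N(u,\psi(u))$ for every $v\in V(G)\setminus X_{A,\psi}$ --- the same elementary observations as in the list-coloring case.

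The key point making every probability computation go through with the \emph{same} formulas is that, because each $M(uv)$ is a matching, for a fixed triple $(v,c,i)$ with $v\in V(G_i)$ the pairs $(u,c')\in N_{G_i,(L,M)}(v,c)$ have pairwise distinct first coordinates, and after the reduction above there are exactly $D$ of them. Hence $\Prob{\text{no }u\in N_{G_i}(v)\cap A\text{ has }c\,\psi(u)\in M(uv)}=(1-p/\Lambda)^D$, and by near-disjointness the corresponding events over the $C$ graphs containing $v$ are mutually independent, so $\Prob{c\in L_{A,\psi}(v)}=(1-p/\Lambda)^{DC}$; these are the exact analogues of \eqref{eqn:keep-probability} and \eqref{eqn:full-keep-probability}. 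I would then keep $\remainingColoursRV_v:=|L_{A,\psi}(v)|$ and set
\begin{align*}
  \colDegRV_{v,c,i}&:=\bigl|\{(u,c')\in N_{G_i,(L,M)}(v,c):u\notin X_{A,\psi},\ c'\in L_{A,\psi}(u)\}\bigr|,\\
  \uncolActivatedNbrsRV_{v,c,i}&:=\bigl|\{(u,c')\in N_{G_i,(L,M)}(v,c):u\in A\setminus X_{A,\psi}\}\bigr|,\\
  \keptUnactColsRV_{v,c,i}&:=\bigl|\{(u,c')\in N_{G_i,(L,M)}(v,c):u\notin A,\ (u,c')\notin\textstyle\bigcup_{w\in A\setminus V(G_i)}N(w,\psi(w))\}\bigr|.
\end{align*}
The same bookkeeping as in \eqref{eqn:degree-overcount} gives $\colDegRV_{v,c,i}\le\uncolActivatedNbrsRV_{v,c,i}+\keptUnactColsRV_{v,c,i}$, and the computation of the three expectations is identical to Lemma~\ref{lemma:expectations}, yielding $\newlambda(\Lambda,D,C,p)$, $p\bigl(1-(1-p/\Lambda)^{DC}\bigr)D$ and $(1-p)(1-p/\Lambda)^{D(C-1)}D$ respectively; for the last one one uses that the potential killers of a fixed $c'$ at a fixed $u$ coming from its other $C-1$ graphs also lie at distinct vertices (here near-disjointness is used again) and number $D(C-1)$.

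For the concentration step I would replace $\conflictsRV_{v,c,i}$ by $|\{u\in N_{G_i}(v):c\,\psi(u)\in M(uv)\}|$, keep the exceptional sets $\sampleSpace^*_{v,i}:=\{(A,\psi):\exists\,u\in\{v\}\cup N_{G_i}(v)\cup N^2_{G_i}(v),\ \exists\,c\in L(u),\ \conflictsRV_{u,c,i}\ge\log D\}$, and retain the (innocuous) assumption that $uv\in E(G)$ only if $M(uv)\neq\emptyset$, so that each $G_i$ has maximum degree at most $\Lambda D$; then the bound $\Prob{\sampleSpace^*_{v,i}}\le 11^3C^3D^5(\log D)^{-\log D}$ of Proposition~\ref{prop:exceptional-outcomes} holds verbatim, as the estimate only uses that each $\conflictsRV_{u,c,i}$ is dominated by a sum of $D$ independent indicators of mean $1/\Lambda$. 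The $(s,\delta)$-certificate arguments of Lemma~\ref{lemma:concentrations} then transfer with the same parameters: for $\Lambda-\remainingColoursRV_v$ take $s=2\Lambda$, $\delta=1$, choosing for each lost color $c$ a witness $u_c\in N_G(v)\cap A$ with $(v,c)\in N(u_c,\psi(u_c))$; for $D-\keptUnactColsRV_{v,c,i}$ take $s=3D$, $\delta=1$; and for $\uncolActivatedNbrsRV_{v,c,i}$ take $s=4D$, $\delta=\log D$ with exceptional set $\sampleSpace^*_{v,i}$, using that off $\sampleSpace^*_{v,i}$ the multiset of witnesses $\{w_u\}$ has multiplicity at most $\log D$. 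Applying Theorem~\ref{thm:bj-tala} with $t=D^{2/3}$ (and Proposition~\ref{prop:exceptional-outcomes} for the last random variable) gives the verbatim analogue of Lemma~\ref{lemma:concentrations}. Finally, the Lov\'asz Local Lemma step is unchanged: the bad events $\mathcal A_v$, $\mathcal A_{v,c,i}$, $\mathcal A'_{v,c,i}$ defined as in the proof of Lemma~\ref{lemma:nibble} each have probability at most $\exp(-D^{1/4})$ and are mutually independent of all but at most $(11C^2D^2)^4$ others, so some outcome $(A,\psi)$ avoids all of them; then $X:=X_{A,\psi}$, $\phi:=\psi|_X$, and $L':=L_{A,\psi}|_{V(G)\setminus X}$ together with $M|_{E(G-X)}$ satisfy \ref{nibble-corr:list-size} and \ref{nibble-corr:color-degree} via \eqref{eqn:degree-overcount} and the expectation formulas, after truncating the lists to size $\lceil\newlambda(\Lambda,D,C,p)-\Lambda^{4/5}\rceil$.

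I expect the only genuine obstacle to be checking that passing from the identity ``$\psi(u)=c$'' to a matching relation does not destroy the mutual independences on which the whole argument rests --- concretely, that for each $(v,c,i)$ the pairs of $N_{G_i,(L,M)}(v,c)$, and the potential killers of a fixed color at a fixed vertex coming from its other $C-1$ graphs, all sit at \emph{distinct} vertices, so that the relevant ``activated-and-assigned-a-matching-color'' events are governed by disjoint families of coordinate trials. Both facts are immediate from the definition of a correspondence assignment (each $M(uv)$ is a matching) together with near-disjointness of the $G_i$. Granting this, every probabilistic estimate, every certificate, and the final union and Local Lemma bounds are word-for-word those of Section~\ref{section:nibble} with ``$\psi(u)=c$'' replaced by ``$(v,c)\in N(u,\psi(u))$''; I would present the proof of Lemma~\ref{lemma:nibble-corr} in that telescoped form, indicating the substitutions rather than repeating the calculations.
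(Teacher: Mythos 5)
Your proposal is correct and follows exactly the approach the paper itself takes in Section~\ref{section:corr-col}: redefine $L_{A,\psi}$ and the random variables $\colDegRV$, $\uncolActivatedNbrsRV$, $\keptUnactColsRV$, $\conflictsRV$ in the correspondence language, observe that the matching property of each $M(uv)$ forces the pairs in $N_{G_i,(L,M)}(v,c)$ to sit at distinct vertices (and near-disjointness keeps the killers across the $C$ graphs at $v$ disjoint), and then rerun the expectation, certificate, concentration and Local Lemma steps of Section~\ref{section:nibble} verbatim. The paper only sketches these substitutions, and your write-up correctly supplies the omitted details, including the correspondence analogue of Proposition~\ref{prop:embedding}.
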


\begin{lemma}\label{lemma:finishing-blow-corr}
  Let $G$ be a graph with correspondence assignment $L$.  If $|L(v)| \geq 8D$ for every $v \in V(G)$ and $\Delta(G, (L, M)) \leq D$, then $G$ is $(L, M)$-colorable.
\end{lemma}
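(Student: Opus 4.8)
The statement to be proved, Lemma~\ref{lemma:finishing-blow-corr}, is the correspondence-coloring analogue of Reed's ``finishing'' lemma (Lemma~\ref{lemma:finishing-blow}), and the plan is to obtain it by transcribing the proof of the latter into the correspondence setting. First I would make two harmless reductions: truncate every list down to size exactly $8D$ (this can only decrease color degrees, so the truncated instance still has $\Delta(G,(L,M))\le D$, and a coloring of it is a coloring of the original), and delete every edge $e$ with $M(e)=\emptyset$, since such an edge imposes no constraint on an $(L,M)$-coloring. After these reductions, the role played in the list-coloring proof by the equality ``$\phi(u)=\phi(v)$'' on an edge $uv$ is played here by the relation ``$\phi(u)\phi(v)\in M(uv)$''; the key feature to exploit is that $M(uv)$ is a \emph{matching}, so that for each fixed colour $c$ at $v$ there is at most one colour $c'$ at $u$ with $cc'\in M(uv)$. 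This is exactly the property of ``$\phi(u)=\phi(v)$'' (where the unique offending colour at $u$ is $\phi(v)$) on which Reed's probability estimates rest.

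With these reductions in place I would run Reed's argument essentially verbatim: colour each vertex with a uniformly random colour of its (size $8D$) list, independently of all other vertices, and apply the Lov\'asz Local Lemma (Lemma~\ref{local lemma}) to the family of bad events used in Reed's proof, each of which records whether some vertex's chosen colour is in conflict, via $M$, with a neighbour's chosen colour. For a vertex $v$ with $\phi(v)=c$, the set $N_{G,(L,M)}(v,c)$ of pairs $(u,c')$ in conflict with $(v,c)$ has size at most $\Delta(G,(L,M))\le D$ by hypothesis, and each such pair is realised with probability $\Pr[\phi(u)=c']=1/(8D)$; a union bound then shows that $v$ lies in some conflict with probability at most $D/(8D)=1/8$, uniformly over the choice of $\phi(v)$. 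These are exactly the numerical inputs of Reed's proof, and the dependency structure is identical, since each bad event is determined by the colours chosen at the same set of vertices as in the list-coloring argument; hence the quantitative hypothesis $4pd\le 1$ of Lemma~\ref{local lemma} is verified exactly as in~\cite{Ree99}, and the Local Lemma produces a conflict-free colouring, that is, an $(L,M)$-colouring of $G$.

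I expect the main --- and essentially the only --- obstacle to be bookkeeping rather than any new idea: one must check that every step of the list-coloring proof that uses ``$\phi(u)=\phi(v)$'', or that counts colours equal to a prescribed colour, can be replaced by the corresponding statement about $M$-conflicts with no change to any estimate, using each time that $M(e)$ is a matching. This is precisely the benign substitution carried out throughout Section~\ref{section:corr-col} in passing from Theorem~\ref{thm:main} to Theorem~\ref{thm:main-corr} and from Lemma~\ref{lemma:nibble} to Lemma~\ref{lemma:nibble-corr}, so no real difficulty is anticipated.
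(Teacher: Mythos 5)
Your high-level plan --- a uniformly random colouring followed by the symmetric Lov\'asz Local Lemma, exploiting only that each $M(e)$ is a matching --- is exactly what the paper intends: the paper gives no more detail than that the lemma ``can be proved with a straightforward application of the Lov\'asz Local Lemma'' and notes that Haxell's theorem on independent transversals even gives the stronger bound with $|L(v)|\ge 2D$. However, the probability estimate you write down does not feed correctly into Lemma~\ref{local lemma}. The quantity $D/(8D)=1/8$ is the probability that a \emph{fixed} vertex $v$ is involved in some conflict, but the corresponding per-vertex bad event depends on the colours of all of $N(v)$, so two such events are dependent whenever the vertices are at distance at most two; the dependency degree is then controlled only by $\Delta(G)^2$, not by any function of $D$, and $4pd\le1$ fails. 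The events must be finer: for each edge $uv\in E(G)$ and each pair $cc'\in M(uv)$, let $A_{uvcc'}$ be the event that $\phi(u)=c$ and $\phi(v)=c'$. After truncating lists to size exactly $8D$, $\Pr[A_{uvcc'}]=(8D)^{-2}$. Since $A_{uvcc'}$ is determined by $\phi(u)$ and $\phi(v)$ alone, it is mutually independent of every $A_{u'v'c''c'''}$ with $\{u',v'\}\cap\{u,v\}=\emptyset$; the number of remaining events is at most
\[
\sum_{w\in N(u)}|M(uw)|+\sum_{w\in N(v)}|M(vw)|
=\sum_{c\in L(u)}d_{G,(L,M)}(u,c)+\sum_{c\in L(v)}d_{G,(L,M)}(v,c)\le 2\cdot 8D\cdot D=16D^2,
\]
so $4pd\le 4\cdot(8D)^{-2}\cdot 16D^2=1$ and Lemma~\ref{local lemma} applies. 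This is the same computation as in the list-colouring case, with ``$\phi(u)=\phi(v)=c$'' replaced by ``$\phi(u)\phi(v)\in M(uv)$'', so once the bad events are set up at the right granularity your transcription of Reed's argument goes through verbatim.
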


Lemma~\ref{lemma:finishing-blow-corr}, like Lemma~\ref{lemma:finishing-blow}, can be proved with a straightforward application of the Lov\' asz Local Lemma.  A stronger result (with `$|L(v)| \geq 8D$' replaced by `$|L(v)| \geq 2D$') also follows easily from a well-known result of Haxell~\cite{Hax01} on independent transversals.  

Therefore it remains to describe how the argument presented in Section~\ref{section:nibble} to prove Lemma~\ref{lemma:nibble} can be modified to obtain Lemma~\ref{lemma:nibble-corr}.  We consider the same probability space $(\sampleSpace, \sigmaAlg, \mathbb P)$ of pairs $(A, \psi)$, but we instead define $L_{A, \psi}(v) \coloneqq \{c \in L(v) : (v, c) \in \left(\{v\}\times L(v)\right) \setminus \bigcup_{u\in A}N(u, \psi(u))\}$.  The definition of $X_{A, \psi}$ and of $\remainingColoursRV_v(A, \psi)$ remains the same, but we replace the definitions of $\colDegRV_{v, c, i}(A, \psi)$, $\uncolActivatedNbrsRV_{v, c, i}(A, \psi)$, and $\keptUnactColsRV_{v, c, i}(A, \psi)$ with the following:
\begin{itemize}
\item $\colDegRV_{v, c, i}(A, \psi) \coloneqq \left|\left\{ (u, c') \in N_{G_i, (L|_{V(G_i)}, M|_{E(G_i)})}(v, c) : u \notin X_{A, \psi},~c' \in L_{A, \psi}(u)\right\}\right|$,
\item $\uncolActivatedNbrsRV_{v, c, i}(A, \psi) \coloneqq \left|\left\{ (u, c') \in N_{G_i, (L|_{V(G_i)}, M|_{E(G_i)})}(v, c) : u \in A \setminus X_{A, \psi}\right\}\right|$, and
\item $\keptUnactColsRV_{v, c, i}(A, \psi) \coloneqq \left|\left\{ (u, c') \in N_{G_i, (L|_{V(G_i)}, M|_{E(G_i)})}(v, c) : u \notin A\right\} \setminus \bigcup_{u \in A\setminus V(G_i)}N_{G, (L, M)}(u, \psi(u))\right|$.
\end{itemize}

The definition of $\sampleSpace^*_{v, i}$ remains the same, but we replace the definition of $\conflictsRV_{v, c, i}(A, \psi)$ with the following:
\begin{itemize}
\item $\conflictsRV_{v, c, i}(A, \psi) \coloneqq \left|\left\{(u, c') \in N_{G_i, (L_{V(G_i)}, M_{E(G_i)})}(v, c) : \psi(u) = c'\right\}\right|$. \COMMENT{We still do not need to consider whether or not vertices are activated here, but we could require $u \in A$.}
\end{itemize}

The remainder of the proof can be obtained via straightforward modifications, so we omit the details.\COMMENT{To modify Proposition~\ref{prop:embedding}, we construct a correspondence assignment $(L', M')$ for $G'$ satisfying \ref{embedding:embedding}--\ref{embedding:list-size} such that $d_{G'_i, (L', M')}(v, c) = D$ for every $i \in [m']$ and $v \in V(G'_i)$ with $c \in L'(v)$.  We may consider the same construction of $G'$ and $L'$ (in iteration $i$, say); however, it is simpler to instead define $L'((v, x_1, \dots, x_k)) \coloneqq L(v)$ for each $v \in V(G)$ and $(x_1, \dots, x_k) \in [2D]^k$.  For every $(x_1, \dots, x_k) \in [2D]^k$ and every $uv \in E(G)$, we let $cc' \in M'((u, x_1, \dots, x_k)(v, x_1, \dots, x_k))$ if and only if $cc' \in M(uv)$.  For every $v \in V(G_i)$ and every pair $(x_1, \dots, x_k), (y_1, \dots, y_k) \in [2D]^k$, we have $(v, x_1, \dots, x_k)(v, y_1, \dots, y_k) \in E(G')$ if and only if $x_\ell = y_\ell$ for every $\ell \in [k]\setminus\{j\}$ for some $j \in [k]$ such that $(v, x_1, \dots, x_j)(v, y_1, \dots, y_j) \in R_{j, v}$.  In this case, we define $M'((v, x_1, \dots, x_k)(v, y_1, \dots, y_k))$ to be the matching of size one with $c_jc_j \in M'((v, x_1, \dots, x_k)(v, y_1, \dots, y_k))$.  Now for every $(v, x_1, \dots, x_k) \in V(G_i) \times [2D]^k$ and $j \in [k]$ with $c_j \in L(v)$, we have
  \begin{equation*}
        d_{G'_i, (L', M')}\left((v, x_1, \dots, x_k), c_j\right) = 
        d_{G_i, (L, M)}(v, c_j)
        + d_{R_{j, v}}((v, x_1, \dots, x_j)) = D,      
  \end{equation*}
  and for every $(v, x_1, \dots, x_k) \in V(G_\ell) \times [2D]^k$ for $\ell \in [m]\setminus\{i\}$ and every $j \in [k]$ with $c_j \in L(v)$, we have
  \begin{equation*}
    d_{G'_\ell, (L', M')}\left((v, x_1, \dots, x_k), c_j\right) =
    d_{G, L}(v, c_j).
  \end{equation*}
  Thus, the result follows by induction.}
This completes the proof of Lemma~\ref{lemma:nibble-corr} and in turn implies Theorem~\ref{thm:main-corr}.

\section{Proof of Theorems~\ref{thm:chi-efl} and \ref{thm:postle}}\label{section:chi-efl}


In this section, we prove Theorems~\ref{thm:chi-efl} and \ref{thm:postle}.  Recall that $f(m, \cG)$ is the largest possible chromatic number of the union of at most $m$ nearly disjoint graphs in $\cG$, and $\cG^\chi_n$ is the set of graphs of chromatic number at most $n$.  We begin by providing the construction that certifies that $f(3, \cG^\chi_n) \geq n + 1$.

\begin{proof}[Proof of Theorem~\ref{thm:chi-efl}\ref{chi-efl-lower}]
    Let $H_1$ and $H_2$ be complete graphs on $n + 1$ vertices such that $V(H_1) \cap V(H_2) = \{v\}$ for some vertex $v$, let $u \in V(H_1) \setminus \{v\}$, let $w \in V(H_2) \setminus \{v\}$, let $G_1 \coloneqq H_1 - uv$, let $G_2 \coloneqq H_2 - wv$, and let $G_3$ be the graph consisting of the single edge $uw$.  It is straightforward to check that
  \begin{itemize}
  \item $G_1$, $G_2$, and $G_3$ are nearly disjoint,
  \item $\chi(G_i) \leq n$ for every $i \in [3]$, and
  \item $\chi(G_1 \cup G_2 \cup G_3) = n + 1$,
  \end{itemize}
  as desired.
\end{proof}

To prove Theorem~\ref{thm:chi-efl}, it remains to prove the bound $f(m, \cG^\chi_n) \leq m + n - 2$ when $m + n$ is sufficiently large.  First we note the following immediate consequence of the main result in \cite{KKKMO21} (namely that the Erd\H os--Faber--Lov\' asz conjecture holds for all sufficiently large $n$).

\begin{theorem}[\cite{KKKMO21}]\label{thm:efl}
  The following holds for all sufficiently large $C$: If $G_1, \dots, G_m$ are nearly disjoint graphs satisfying $\max\left\{m, |V(G_1)|, \dots, |V(G_m)|\right\} \leq C$, then $\chi\left(\bigcup_{i=1}^m G_i\right) \leq C$.
\end{theorem}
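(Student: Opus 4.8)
The plan is to obtain Theorem~\ref{thm:efl} as a direct consequence of the Erd\H os--Faber--Lov\'asz conjecture, which holds for all sufficiently large $n$ by~\cite{KKKMO21}: a nearly disjoint union of $n$ complete graphs, each on at most $n$ vertices, has chromatic number at most $n$. I would take $C$ large enough for this to hold with $n = C$, and then, given nearly disjoint graphs $G_1, \dots, G_m$ with $m \le C$ and $|V(G_i)| \le C$ for every $i \in [m]$, build from them a nearly disjoint family of exactly $C$ copies of $K_C$ to which the conjecture applies.

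The construction has two steps. First, for each $i \in [m]$ I would complete $G_i$ to the clique on vertex set $V(G_i)$ by adding all missing edges inside $V(G_i)$; this can only increase the chromatic number and, since it adds no new vertices, leaves the family nearly disjoint. Second, I would pad: enlarge each of these cliques to order exactly $C$ using $C - |V(G_i)|$ brand-new vertices, and, if $m < C$, also add $C - m$ further pairwise vertex-disjoint copies of $K_C$ on brand-new vertices, all padding vertices being chosen distinct from each other and from $\bigcup_{i=1}^m V(G_i)$. Writing $K_1, \dots, K_C$ for the resulting complete graphs, the only vertices that $K_i$ and $K_j$ can share are those already shared by $G_i$ and $G_j$, of which there is at most one, so $K_1, \dots, K_C$ is a nearly disjoint family of $C$ complete graphs, each on exactly $C$ vertices.

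Applying~\cite{KKKMO21} to $K_1, \dots, K_C$ (legitimate since $C$ is large) gives $\chi\bigl(\bigcup_{i=1}^C K_i\bigr) \le C$. As $G_i$ is a subgraph of $K_i$ for each $i \in [m]$, the graph $\bigcup_{i=1}^m G_i$ is a subgraph of $\bigcup_{i=1}^C K_i$, and monotonicity of the chromatic number under subgraphs yields $\chi\bigl(\bigcup_{i=1}^m G_i\bigr) \le C$, as desired. There is no real obstacle here: the argument is a routine padding reduction, and the only care needed is to keep all padding vertices fresh so that near-disjointness is preserved and to note that completing the $G_i$ to cliques is harmless. The phrase ``sufficiently large $C$'' in the statement is precisely the range in which~\cite{KKKMO21} applies.
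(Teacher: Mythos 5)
Your proposal is correct and matches what the paper intends: the paper states Theorem~\ref{thm:efl} as an immediate consequence of the Erd\H os--Faber--Lov\'asz theorem of~\cite{KKKMO21}, and the implicit reduction is exactly your routine one (complete each $G_i$ to a clique on $V(G_i)$, pad with fresh vertices and extra disjoint copies of $K_C$ to reach $C$ cliques of order $C$, then use monotonicity of $\chi$ under subgraphs). No gaps.
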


\begin{proof}[Proof of Theorem~\ref{thm:chi-efl}\ref{chi-efl-upper}]
  Suppose to the contrary, and let $G_1, \dots, G_m$ be nearly disjoint graphs, each of chromatic number at most $n$, such that $\chi(\bigcup_{i=1}^m G_i) > m + n - 2$ and $|V(\bigcup_{i=1}^m G_i)|$ is minimum.  Note that $n \geq 2$ as otherwise $\chi(\bigcup_{i=1}^m G_i) = 1$.  By Theorem~\ref{thm:efl} with $n + m - 2$ playing the role of $C$, there exists $i \in [m]$ such that $|V(G_i)| > m + n - 2$, and we may assume without loss of generality that $i = 1$.  Let $X \coloneqq V(G_1) \setminus \bigcup_{i=2}^m V(G_i)$.  Since $G_1, \dots, G_m$ are nearly disjoint,
  \begin{equation}\label{eqn:n-extra-vtcs}
    |X| \geq |V(G_1)| - (m - 1) \geq n.
  \end{equation}

  Since $\chi(G_1) \leq n$, there is a partition $I_1, \dots, I_n$ of $V(G_1)$ into independent sets.  By \eqref{eqn:n-extra-vtcs} and the pigeonhole principle, there exists some $j\in [n]$ such that either $|I_j \cap X| > 1$ or both $I_j \cap X$ and $I_j \setminus X$ are nonempty\COMMENT{If not the former, then equality holds in \eqref{eqn:n-extra-vtcs} and $|I_j \cap X| = 1$ for every $j \in [n]$, in which case any $j$ for which $I_j$ intersects $V(G_1)\setminus X$ suffices.}.  We may assume without loss of generality that $j = 1$, so there exist distinct vertices $u, v\in I_1$ such that $u \in X$.  Let $G'_1$ be the graph obtained from $G_1$ by identifying $u$ and $v$ into a single new vertex.  Crucially, $G'_1, G_2, \dots, G_m$ are nearly disjoint, $\chi(G'_1) \leq n$, and $\chi(G'_1 \cup \bigcup_{i=2}^m G_i) \geq \chi(\bigcup_{i=1}^m G_i)$, contradicting the choice of $G_1, \dots, G_m$ to have $|V(\bigcup_{i=1}^m G_i)|$ minimum.
\end{proof}

Now we prove Theorem~\ref{thm:postle}.  In this proof, we use a Latin square to construct a graph of large chromatic number.  An \textit{order-$n$ Latin square} is an $n\times n$ array of $n$ symbols where each row and column contains each symbol exactly once.  That is, for an order-$n$ Latin square $L$, with rows, columns, and symbols indexed by $\row$, $\col$, and $\sym$, respectively, we have $\{L(a, b) : b \in \col\} = \sym$ for every $a \in \row$ and $\{L(a, b) : a \in \row\} = \sym$ for every $b \in \col$.

\begin{proof}[Proof of Theorem~\ref{thm:postle}]
  First, let $G \cong K_{m}$, and since $3 \mid m$, there exist sets $\row, \col, \sym \subseteq V(G)$ of size $m / 3$ that partition $V(G)$.  Since $n \geq m - 1$, we can let $G'$ be the multigraph obtained from $G$ by adding $n - (m - 1) + m/ 3$ loops to each vertex in $\row \cup \col$ and $n - (m - 1)$ loops to each vertex in $\sym$, and we let $H$ be the line graph of $G'$.  Let $L$ be an order-$(m / 3)$ Latin square with rows, columns, and symbols indexed by $\row$, $\col$, and $\sym$, respectively, and for each $a \in \row$ and $b \in \col$, let $e^{ab}_1$ denote the edge in $H$ with ends $ab$ and $ac$, and let $e^{ab}_2$ denote the edge in $H$ with ends $ab$ and $bc$, where $c = L(a, b)$.  Let
  \begin{equation*}
    H' \coloneqq H - \bigcup_{(a,b) \in \row\times\col}\{e^{ab}_1, e^{ab}_2\}.
  \end{equation*}
  We prove that $H'$ has chromatic number at least $n + m / 6$ and is a nearly disjoint union of $m$ graphs of chromatic number $n$, as desired.

  To that end, for each $v \in V(G)$, let $G_v\coloneqq H'[\{e \in E(G') : e \ni v\}]$.  By construction, the graphs in $\{G_v : v \in V(G)\}$ are nearly disjoint, and $H' = \bigcup_{v \in V(G)} G_v$.  Moreover, $G_v$ is isomorphic to $K_{n + m/3}$ with a matching of size $m/3$ removed for every $v \in \row \cup \col$ and is isomorphic to $K_{n}$ for every $v \in \sym$, so $\chi(G_v) = n$ for every $v \in V(G)$, as required.

  To prove that $\chi(H') \geq n + m / 6$, we let $\phi$ be a proper coloring of $H'$ using at most $k$ colors, and we show that $k \geq n + m/6$, as follows.  First, let
  \begin{align*}
    \col_a &\coloneqq \{b \in \col : \phi(ab) = \phi(ac),\ \text{where } c = L(a, b)\} \text{ for every $a \in \row$} && \mathrm{and}\\
    \row_b &\coloneqq \{a \in \row : \phi(ab) = \phi(bc),\ \text{where } c = L(a, b)\} \text{ for every $b \in \col$}.
  \end{align*}
  We claim that the following holds:
  \begin{enumerate}[(a)]
  \item\label{item:mono-pairs-bound} $|\col_a| \geq n + m / 3 - k$ for every $a \in \row$,
  \item\label{item:mono-pairs-bound2} $|\row_b| \geq n + m / 3 - k$ for every $b \in \col$, and
  \item\label{item:mono-trips-bound} $\sum_{a\in \row}|\col_a| + \sum_{b \in \col}|\row_b| \leq \frac{m^2}{9}$.
  \end{enumerate}
  Altogether, \ref{item:mono-pairs-bound}--\ref{item:mono-trips-bound} imply that $(2m / 3)(n + m / 3 - k) \leq m^2 / 9$.  Rearranging terms in this inequality, we have $k \geq n + m / 6$, as desired.  Thus, it remains to prove \ref{item:mono-pairs-bound}--\ref{item:mono-trips-bound}.

  To prove \ref{item:mono-pairs-bound}, fix some $a \in \row$.  Since $\phi$ is a proper coloring, the colors assigned to $\{ab : b \in \col_a\}$ and the colors assigned to vertices of $G_a$ not incident to an edge of $\{e^{ab}_1 : b \in \col_a\}$ are distinct.  Thus, since at most $k$ colors are used in total, $|\col_a| + n + m / 3 - 2|\col_a| \leq k$.  Rearranging terms in this inequality, we have $|\col_a| \geq n + m / 3 - k$, as desired.  The proof of \ref{item:mono-pairs-bound2} is essentially the same, so we omit the details.

  To prove \ref{item:mono-trips-bound},  suppose to the contrary that $\sum_{a\in A}|\col_a| + \sum_{b\in B}|\row_b| > m^2 / 9$.  In this case, there is a pair $(a, b) \in \row \times \col$ such that $a \in \row_b$ and $b \in \col_a$.  Letting $c = L(a, b)$, since $a \in \row_b$, we have $\phi(ab) = \phi(bc)$, and since $b \in \row_a$, we have $\phi(ab) = \phi(ac)$.  However, since $\phi$ is a proper coloring, $\phi(bc) \neq \phi(ac)$, a contradiction.  
\end{proof}

\section*{Acknowledgements}

We thank Luke Postle for allowing us to include the proof of Theorem~\ref{thm:postle}.


\providecommand{\bysame}{\leavevmode\hbox to3em{\hrulefill}\thinspace}
\providecommand{\MR}{\relax\ifhmode\unskip\space\fi MR }
\providecommand{\MRhref}[2]{%
  \href{http://www.ams.org/mathscinet-getitem?mr=#1}{#2}
}
\providecommand{\href}[2]{#2}

\APPENDIX{
  \appendix
  \renewcommand\sectionname{}
  \section{Proof of Proposition~\ref{prop:embedding}}

  This section is devoted to the proof of Proposition~\ref{prop:embedding}, which we restate for the reader's convenience.
  \EmbeddingProp*
  \begin{proof}
    By possibly adding graphs $G_{m+1}, \dots $ to the list $G_1, \dots, G_m$, each consisting of a single vertex, we assume without loss of generality that $|\{i \in [m] : v \in V(G_i)\}| = C$ for every $v \in V(G)$ (that is, that \ref{embedding:containment} already holds for $m' \coloneqq m$, $G'_i \coloneqq G_i$ for $i \in [m]$, and $G' \coloneqq G$).  

    First enumerate the colors in $\bigcup_{v \in V(G)}L(v)$ as $c_1, \dots, c_k$.
    For each $i \in [m]$, we iteratively do the following.  We first construct nearly disjoint $G'_1, \dots, G'_m$ satisfying \ref{embedding:embedding} and \ref{embedding:containment}.  We also construct a list assignment $L'$ for $G' \coloneqq \bigcup_{\ell=1}^mG'_\ell$ satisfying \ref{embedding:lists-preserved} and \ref{embedding:list-size} such that $d_{G'_\ell, L'}(v, c) = D$ for every $\ell \in [i]$ and $v \in V(G'_\ell)$ with $c \in L'(v)$ (i.e.\ \ref{embedding:degree} holds for $G'_1, \dots, G'_i$).
    We apply each subsequent iteration to embed the graphs $G'_1, \dots, G'_m$ (i.e.\ we relabel $G'_\ell$ as $G_\ell$ for the next iteration and repeat with $i + 1$), and at the end of the procedure, \ref{embedding:embedding}--\ref{embedding:degree} all hold, as desired.

    In the $i$th iteration, for each $\ell \in [m]$, inductively construct $G'_{\ell, 0}, \dots, G'_{\ell, k}$ as follows.  Let $G'_{\ell, 0} \coloneqq G_\ell$, and for each $j \in [k]$, let $G_{\ell, j}$ consist of $2D$ disjoint copies of $G'_{\ell, j - 1}$, where $V(G_{\ell, j}) \coloneqq V(G'_{\ell, j - 1}) \times [2D]$ (so $V(G_{\ell, j}) \coloneqq V(G_\ell) \times [2D]^j$).  If $\ell \neq i$, then let $G'_{\ell, j} \coloneqq G_{\ell, j}$.  Otherwise, for each $v \in V(G_i)$, let $R_{j, v}$ be a $(D - d_{G_i, L}(v, c_j))$-regular graph where $V(R_{j, v}) \coloneqq \{v\} \times [2D]^j$ satisfying $N_{R_{j,v}}((v, x_1, \dots, x_j)) \subseteq \{v\} \times \{x_1\} \times \cdots \times \{x_{j-1}\}\times [2D]$ for every $(v, x_1, \dots, x_j) \in V(R_{j, v})$ (for example by taking $(2D)^{j - 1}$ disjoint copies of a bipartite graph with parts of size $D$).  Note that $R_{j, v}$ and $G_{i, j}$ are edge-disjoint, and let $G'_{i, j} \coloneqq G_{i, j} \cup \bigcup_{v \in V(G_i)}R_{j, v}$.  Let $G'_\ell \coloneqq G'_{\ell, k}$ for each $\ell \in [m]$, and note that \ref{embedding:containment} holds.

    Let $G' \coloneqq \bigcup_{\ell=1}^m G'_\ell$, and let $L'$ be the list assignment for $G'$ defined as follows.  For each $v \in V(G)$ and $(x_1, \dots, x_k) \in [2D]^k$, let $L'((v, x_1, \dots, x_k)) \coloneqq \{(c_j, x_1, \dots, x_{j - 1}, 0, x_{j + 1}, \dots, x_k) : c_j \in L(v)\}$.  Note that \ref{embedding:list-size} holds.  Now for every $(v, x_1, \dots, x_k) \in V(G_i) \times [2D]^k$ and $j \in [k]$ with $c_j \in L(v)$, we have
    \begin{multline*}
      d_{G'_i, L'}\left((v, x_1, \dots, x_k), (c_j, x_1, \dots, x_{j-1}, 0, x_{j+1}, \dots, x_k)\right) = \\
      d_{G_i, L}(v, c_j)
      + d_{R_{j, v}}((v, x_1, \dots, x_j)) = D,      
    \end{multline*}
    and for every $(v, x_1, \dots, x_k) \in V(G_\ell) \times [2D]^k$ for $\ell \in [m]\setminus\{i\}$ and every $j \in [k]$ with $c_j \in L(v)$, we have
    \begin{equation*}
      d_{G'_\ell, L'}\left((v, x_1, \dots, x_k), (c_j, x_1, \dots, x_{j-1}, 0, x_{j+1}, \dots, x_k)\right) =
      d_{G, L}(v, c_j).
    \end{equation*}
    In particular, \ref{embedding:degree} holds for every $\ell \in [i]$ by induction, as desired.  Finally, we complete the proof by relabelling vertices and colors so that \ref{embedding:embedding} and \ref{embedding:lists-preserved} also hold.
  \end{proof}

}
\end{document}